\documentclass[leqno,11pt]{article}

\RequirePackage[amsthm,amsmath]{imsart}

 \parskip        0.3\baselineskip
 \topmargin      0.0truein
 \oddsidemargin  0.0truein	
 \evensidemargin 0.0truein
 \textheight     8.5truein
 \textwidth      6.2truein
 \headheight     0.0truein
 \headsep        0.3truein

\usepackage[mathscr]{eucal}
\usepackage{graphicx}
\usepackage{latexsym}
\usepackage{amssymb}
\usepackage{psfrag}
\usepackage{epsfig}
\usepackage{MnSymbol}
\usepackage{enumerate}
\usepackage{multirow}
\DeclareMathAlphabet{\mathpzc}{OT1}{pzc}{m}{it}

\include{psbox}

\usepackage{amsfonts}
\usepackage{graphicx}
\usepackage{amsfonts}
\usepackage{color}
\usepackage[usenames,dvipsnames]{xcolor}
\usepackage[textsize=small]{todonotes}
\usepackage{caption,subcaption}
\usepackage{soul}

\usepackage[numbers]{natbib}

\begin{document}

\renewcommand*{\bibfont}{\small}
	\newtheorem{proposition}{Proposition}[section]
	\newtheorem{theorem}[proposition]{Theorem}
	\newtheorem{corollary}[proposition]{Corollary}
	\newtheorem{lemma}[proposition]{Lemma}
	\newtheorem{conjecture}[proposition]{Conjecture}
	\newtheorem{question}[proposition]{Question}
	\newtheorem{definition}[proposition]{Definition}
	\newtheorem{comment}[proposition]{Comment}
	\newtheorem{algorithm}[proposition]{Algorithm}
	\newtheorem{assumption}[proposition]{Assumption}
	\newtheorem{condition}[proposition]{Condition}
	\numberwithin{equation}{section}
	\numberwithin{proposition}{section}

\captionsetup[table]{format=plain,labelformat=simple,labelsep=period}

\newcommand{\skp}{\vspace{\baselineskip}}
\newcommand{\noi}{\noindent}
\newcommand{\osc}{\mbox{osc}}
\newcommand{\lfl}{\lfloor}
\newcommand{\rfl}{\rfloor}

\theoremstyle{remark}
\newtheorem{example}{\bf Example}[section]
\newtheorem{remark}{\bf Remark}[section]

\newcommand{\img}{\imath}
\newcommand{\iy}{\infty}
\newcommand{\eps}{\varepsilon}
\newcommand{\del}{\delta}
\newcommand{\Rk}{\mathbb{R}^k}
\newcommand{\RR}{\mathbb{R}}
\newcommand{\spa}{\vspace{.2in}}
\newcommand{\V}{\mathcal{V}}
\newcommand{\E}{\mathbb{E}}
\newcommand{\I}{\mathbb{I}}
\newcommand{\PP}{\mathbb{P}}
\newcommand{\sgn}{\mbox{sgn}}
\newcommand{\ti}{\tilde}

\newcommand{\QQ}{\mathbb{Q}}

\newcommand{\XX}{\mathbb{X}}
\newcommand{\XXz}{\mathbb{X}^0}

\newcommand{\lan}{\langle}
\newcommand{\ran}{\rangle}
\newcommand{\llan}{\llangle}
\newcommand{\rran}{\rrangle}
\newcommand{\lf}{\lfloor}
\newcommand{\rf}{\rfloor}
\def\wh{\widehat}
\newcommand{\defn}{\stackrel{def}{=}}
\newcommand{\txb}{\tau^{\eps,x}_{B^c}}
\newcommand{\tyb}{\tau^{\eps,y}_{B^c}}
\newcommand{\tilxb}{\tilde{\tau}^\eps_1}
\newcommand{\pxeps}{\mathbb{P}_x^{\eps}}
\newcommand{\non}{\nonumber}
\newcommand{\dist}{\mbox{dist}}

\newcommand{\Om}{\mathnormal{\Omega}}
\newcommand{\om}{\omega}
\newcommand{\vph}{\varphi}
\newcommand{\Del}{\mathnormal{\Delta}}
\newcommand{\Gam}{\mathnormal{\Gamma}}
\newcommand{\Sig}{\mathnormal{\Sigma}}

\newcommand{\tilyb}{\tilde{\tau}^\eps_2}
\newcommand{\beq}{\begin{eqnarray*}}
\newcommand{\eeq}{\end{eqnarray*}}
\newcommand{\beqn}{\begin{eqnarray}}
\newcommand{\eeqn}{\end{eqnarray}}
\newcommand{\ink}{\rule{.5\baselineskip}{.55\baselineskip}}

\newcommand{\bt}{\begin{theorem}}
\newcommand{\et}{\end{theorem}}
\newcommand{\deps}{\Del_{\eps}}
\newcommand{\dbl}{\mathbf{d}_{\tiny{\mbox{BL}}}}

\newcommand{\be}{\begin{equation}}
\newcommand{\ee}{\end{equation}}
\newcommand{\ac}{\mbox{AC}}
\newcommand{\hs}{\tiny{\mbox{HS}}}
\newcommand{\BB}{\mathbb{B}}
\newcommand{\VV}{\mathbb{V}}
\newcommand{\DD}{\mathbb{D}}
\newcommand{\KK}{\mathbb{K}}
\newcommand{\HH}{\mathbb{H}}
\newcommand{\TT}{\mathbb{T}}
\newcommand{\CC}{\mathbb{C}}
\newcommand{\ZZ}{\mathbb{Z}}
\newcommand{\SSS}{\mathbb{S}}
\newcommand{\EE}{\mathbb{E}}
\newcommand{\NN}{\mathbb{N}}
\newcommand{\MM}{\mathbb{M}}

\newcommand{\clg}{\mathcal{G}}
\newcommand{\clb}{\mathcal{B}}
\newcommand{\cls}{\mathcal{S}}
\newcommand{\clc}{\mathcal{C}}
\newcommand{\clj}{\mathcal{J}}
\newcommand{\clm}{\mathcal{M}}
\newcommand{\clx}{\mathcal{X}}
\newcommand{\cld}{\mathcal{D}}
\newcommand{\cle}{\mathcal{E}}
\newcommand{\clv}{\mathcal{V}}
\newcommand{\clu}{\mathcal{U}}
\newcommand{\clr}{\mathcal{R}}
\newcommand{\clt}{\mathcal{T}}
\newcommand{\cll}{\mathcal{L}}
\newcommand{\clz}{\mathcal{Z}}
\newcommand{\clq}{\mathcal{Q}}
\newcommand{\clo}{\mathcal{O}}

\newcommand{\cli}{\mathcal{I}}
\newcommand{\clp}{\mathcal{P}}
\newcommand{\cla}{\mathcal{A}}
\newcommand{\clf}{\mathcal{F}}
\newcommand{\clh}{\mathcal{H}}
\newcommand{\N}{\mathbb{N}}
\newcommand{\Q}{\mathbb{Q}}
\newcommand{\bfx}{{\boldsymbol{x}}}
\newcommand{\bfa}{{\boldsymbol{a}}}
\newcommand{\bfh}{{\boldsymbol{h}}}
\newcommand{\bfs}{{\boldsymbol{s}}}
\newcommand{\bfm}{{\boldsymbol{m}}}
\newcommand{\bff}{{\boldsymbol{f}}}
\newcommand{\bfb}{{\boldsymbol{b}}}
\newcommand{\bfw}{{\boldsymbol{w}}}
\newcommand{\bfz}{{\boldsymbol{z}}}
\newcommand{\bfu}{{\boldsymbol{u}}}
\newcommand{\bfell}{{\boldsymbol{\ell}}}
\newcommand{\bfn}{{\boldsymbol{n}}}
\newcommand{\bfd}{{\boldsymbol{d}}}
\newcommand{\bfbeta}{{\boldsymbol{\beta}}}
\newcommand{\bfzeta}{{\boldsymbol{\zeta}}}
\newcommand{\bfnu}{{\boldsymbol{\nu}}}
\newcommand{\bfvarphi}{{\boldsymbol{\varphi}}}

\newcommand{\curvz}{{\bf \mathpzc{z}}}
\newcommand{\curvx}{{\bf \mathpzc{x}}}
\newcommand{\curvi}{{\bf \mathpzc{i}}}
\newcommand{\curvs}{{\bf \mathpzc{s}}}
\newcommand{\blip}{\mathbb{B}_1}
\newcommand{\loc}{\text{loc}}

\newcommand{\BM}{\mbox{BM}}

\newcommand{\tac}{\mbox{\scriptsize{AC}}}



\begin{frontmatter}
\title{Diffusion Approximations for Load Balancing Mechanisms in Cloud Storage Systems	
}

 \runtitle{Diffusion Approximations for Cloud Storage Systems}

\begin{aug}
\author{Amarjit Budhiraja  and Eric Friedlander\\ \ \\
}
\end{aug}

\today

\skp

\begin{abstract}
In large storage systems, files are often coded across several servers to improve reliability and retrieval speed. We study load balancing under the Batch Sampling routing scheme for a network of $n$ servers storing a set of files using the Maximum Distance Separable (MDS) code (cf. \cite{li2016mean}). Specifically, each file is stored in equally sized pieces across $L$ servers such that any $k$ pieces can reconstruct the original file. When a request for a file is received, the dispatcher routes the job into the $k$-shortest queues among the $L$ for which the corresponding server contains a piece of the file being requested. We establish a law of large numbers and a central limit theorem as the system becomes large (i.e. $n\to\iy$). For the central limit theorem, the limit process take values in $\bfell_2$, the space of square summable sequences. Due to the large size of such systems, a direct analysis of the $n$-server system is frequently intractable. The law of large numbers and diffusion approximations established in this work provide practical tools with which to perform such analysis. The Power-of-$d$ routing scheme, also known as the supermarket model, is a special case of the model considered here.

\noi {\bf AMS 2000 subject classifications:} Primary 60K35, 60H30, 93E20; secondary 60J28, 60J70, 60K25, 91B70.

\noi {\bf Keywords:} Mean field approximations, diffusion approximations, stochastic networks, cylindrical Brownian motion, propagation of chaos, cloud storage systems, supermarket model, MDS coding, Power-of-$d$.
\end{abstract}

\end{frontmatter}

\section{Introduction}
In the world of cloud-based computing, large data centers are often used for file storage. In these data centers, large networks of servers are used to store even larger sets of files. In order to improve reliability and retrieval speed, these files are often ``coded''. By coded, we mean that the file is broken down into smaller pieces which are stored on multiple servers. Consider the situation in which there are four servers and one file. One can store the entire file on one server but in such a configuration the file would be inaccessible if that server were to fail. In order to improve reliability, one can replicate the file across all four servers but such a method would require much more memory.  Suppose we instead split the file into halves, $A$ and $B$, and then store $A,\ B,\ A+B,\ A-B$ in each of the four servers, respectively. Then the original file can be constructed from any two pieces. One can extend this idea to the case where equally sized pieces of a file are stored across $L$ servers and any $k$ pieces can reconstruct the original file. This can be accomplished using the Maximum Distance Separable (MDS) code with parameters $(L,k)$ \cite{lin2004error}. The MDS code greatly improves reliability since $L-k+1$ servers must fail before the file becomes irretrievable, while only requiring enough total memory to store $L/k$ files. Given a coding scheme, one can consider load balancing mechanisms to improve file retrieval speed. In \cite{li2016mean}, two routing schemes, called Batch Sampling (BS) and Redundant Request with Killing (RRK), are considered. In BS routing, incoming jobs are routed to the $k$ shortest queues containing the file being requested, while in RRK routing jobs are routed to all servers containing the requested file and then removed from the queue (killed) once $k$ pieces of the file have been returned. The paper \cite{li2016mean} formally calculates the steady state ($T\to\iy$) queue length distribution in the large system limit $(n\to\iy)$ and gives simulation results for different values of $L$ and $k$ in both routing schemes.

In this work we are interested in developing a rigorous limit theory for such load balancing schemes for systems with MDS coding as $n$ becomes large. Specifically, we establish law of large numbers and diffusion approximations for such systems under an appropriate scaling, as $n\to\iy$. Such limit theorems provide useful model simplifications that can then be employed for approximate simulation of the large and complex $n$-server systems (see Section \ref{sec:example} for some numerical results). These limit theorems are also the first steps towards making rigorous the program initiated in \cite{li2016mean} of developing steady state approximations for such systems, with provable convergence properties as $n$ becomes large. 

We will focus in this work only on BS routing and leave analysis of the RRK scheme for future work.
 Specifically we consider a system with $n$ servers on which $I(n)$ files are stored using MDS coding with parameters $(L,k)$. A key assumption to our analysis is that the files are stored such that each combination of $L$ servers has exactly $c$ files.  We further assume that jobs arrive in the system at rate $n\lambda$ and request a file uniformly at random. This is another simplifying assumption on our model that roughly says that all files are in equal demand. These structural assumptions imply a convenient
 exchangeability property of the system which allows for the use of certain mean-field approximation techniques.
A single file request spawns $k$ jobs which are then routed into the $k$ shortest queues within the set of $L$ servers containing the file being requested. 
Each server  processes the jobs in their queue at rate $k$ according to the first-in-first-out (FIFO) discipline and processing times are mutually independent.
Regarding each server as a ``particle'', the above formulation describes an interacting particle system with \emph{simultaneous jumps}.
Note that the symmetry structure introduced above implies that every time
a file request arrives, it leads to a selection of $L$ servers uniformly at random (from which the $k$ servers with shortest queues are chosen). In particular this says that the well studied 
 ``Power-of-$d$'' routing scheme (also known as the ``supermarket model'') is a special case of the scheme considered here on taking $L=d$ and $k=1$. Direct analysis of such large and complex $n$-server systems is challenging even by simulation methods as
frequently the servers in networks of interest number in the hundreds of thousands with arrival rates of file requests
of similar order. The goal of this work is to develop  suitable approximate approaches to such systems. 

The starting point of our analysis is to consider, as the state descriptor, the empirical measure of the 
$n$ queue lengths rather than the  individual values of the queue lengths. Thus the state space
for our system will be the space $\mathcal{P}_n(\mathbb{N}_0)$ of probability measures on $\NN_0$
that assign weights in $\frac{1}{n}\NN_0$ to sets in $\NN_0$
rather than the space $\mathbb{R}_+^n$. With this formulation the state processes for all $n$-server systems
can be regarded as taking values in a common space $\cls\doteq\mathcal{P}(\mathbb{N}_0)$ (the space of probability measures
on $\mathbb{N}_0$). It follows from our symmetry assumptions that the state-evolution of the $n$-server system describes a pure-jump Markov process with values in $\mathcal{P}(\mathbb{N}_0)$ and thus one can bring to bear the theory of weak convergence of Markov processes to study the scaling limits as $n$ becomes large. In particular, in Theorem \ref{thm:LLN}
we prove a law of large numbers for the empirical measure process $(\pi^n(t))_{0\le t \le T}$ as $n \to \infty$.
We show that
$\pi^n$ converges to a deterministic function $\pi$ in $\DD([0,T]:\cls)$, where $\DD([0,T]:\cls)$ is the space of functions from $[0,T]$ to $\cls$ that are right continuous and have left limits, equipped with the usual Skorohod topology. 
Next we consider the fluctuation process $X^n\doteq\sqrt{n}(\pi^n-\pi)$.
This process can be regarded as taking values in the space of signed measures on $\NN_0$, however for an asymptotics analysis it is convenient to view it as taking values in the Hilbert space of square summable real sequences, $\bfell_2$.
The study of the asymptotics of these fluctuations is the subject of Theorem \ref{thm:mainResult} which shows that $X^n \doteq \sqrt{n}(\pi^n -\pi)$ converges in
$\mathbb{D}([0,T]:\bfell_2)$ to a $\bfell_2$-valued diffusion process.


Load balancing mechanisms similar to the type considered here have been studied in many works. Specifically, the Join-the-Shortest-Queue (JSQ), Join-the-Idle-Queue (JIQ), and Power-of-$d$ routing schemes have garnered quite a bit of attention (see \cite{vvedenskaya1996queueing, eschenfeldt2015join,mukherjee2015universality,mitzenmacher2001power,bramson2012asymptotic,stolyar2015pull, graham2000chaoticity} and  references therein). The papers \cite{vvedenskaya1996queueing, mitzenmacher2001power} first analyzed the Power-of-$d$ routing scheme, showing that in steady-state the fraction of queues with lengths exceeding $m$
 decay super-exponentially in $m$, a large improvement over the exponential rate for the setting where jobs are routed to servers uniformly at random. The paper \cite{graham2000chaoticity} establishes a functional law of large numbers for $\pi^n$
on $\DD([0,T]:\cls)$ in the Power-of-$d$ routing scheme using characterization results for nonlinear martingale problems.
In \cite{eschenfeldt2015join}, the authors derive a diffusion approximation for the JSQ routing policy in the large-system limit under heavy-traffic scaling. It is shown that the  limit can be characterized through a two-dimensional diffusion. In \cite{mukherjee2015universality}, it is shown that the JIQ routing scheme yields the same 
diffusion approximation as the JSQ routing scheme. In both of these works, the  diffusion approximations are derived under the same scaling regime as considered here. However, unlike for the JSQ and JIQ routing regimes where the diffusion approximations can be described through two-dimensional processes, in the current work
 the limit is an infinite dimensional diffusion described as an 
 $\bfell_2$-valued process driven by a cylindrical Brownian motion. 
As was noted earlier, 
the Power-of-$d$ scheme is a special case of our results and thus Theorems \ref{thm:LLN} and
\ref{thm:mainResult} also provide law of large numbers and diffusion approximations for this classical load balancing scheme (see Corollaries \ref{cor:pdLLN} and \ref{cor:pddiff}).
In particular Corollary \ref{cor:pdLLN} recovers the law of large numbers established in \cite{graham2000chaoticity}.
To the best of our knowledge,  limit theorems giving diffusion approximations for Power-of-$d$ schemes have not been studied previously.

Diffusion approximation methods have been used extensively in stochastic network theory. In particular they have been very useful in the study of critically loaded stochastic processing networks (see \cite{kushner2013heavy,harrison1988brownian,atar2014asymptotic, bellwill01, DaiLin, wwbook, AA2, BGL} and references therein). For such systems the key mathematical tool is the functional central limit theorem for renewal processes which provides Brownian motion approximations for  a finite collection of centered renewal processes with rates approaching infinity. The scaling regime and mathematical tools that are relevant for the analysis in the current context are quite different from those used in the above works. In particular, here the number of nodes approach $\infty$ and the tools for proving convergence come from martingale problems and Markov process theory.  
A similar scaling regime was considered in \cite{budhiraja2016diffusion} for certain systems motivated by ad hoc wireless network models introduced in \cite{antunes2008stochastic}. A key simplifying feature there is that the 
state space of an individual queue is a finite set.  Consequently the limit diffusion in \cite{budhiraja2016diffusion} is finite dimensional and thus, for diffusion approximations, classical convergence 
theorems from \cite{kurtz1971limit,joffe1986weak} can be invoked. 
In contrast, the queue length  processes in this work are unbounded, taking values in $\mathbb{N}_0$, and thus one needs to study diffusion approximations in an infinite dimensional state space, namely the Hilbert space $\bfell_2$.
The proofs employ  appropriate criteria for tightness and characterization results for Hilbert space-valued stochastic processes.  

A basic assumption in our analysis of the fluctuations around the law of large number limit  (see statement of Theorem
\ref{thm:mainResult}) is a uniform (in $n$) bound on the second  moment of the empirical measure at time $0$.
This condition is not very stringent as in practice one may consider systems starting from empty 
or with finitely many jobs (independent of $n$). 
We argue that these integrability properties at time $0$
 propagate through to any finite future time $T$. 
Tightness of the scaled fluctuation processes $X^n$ which is shown by establishing, uniform in $n$, second moment bounds (on $X^n$) and by employing 
criteria for tightness of Hilbert space-valued semimartingales (cf. \cite{joffe1986weak}, \cite{metivier1982semimartingales}), relies on these integrability properties.
Another ingredient in the proof of tightness is a suitable Lipschitz property of the map $F$ introduced in \eqref{eqn:codingF}  that enables the use of a Gronwall argument. For this argument one needs a Lipschitz estimate in the $\bfell_2$ norm, however, it is not  clear that $F$, as a map from $\bfell_2$ to $\bfell_2$, is Lipschitz. We instead restrict attention to a smaller space
$$\clv_M \doteq \left\{r \in \bfell_2: r_i\ge 0,  \sum_{i=0}^\iy r_i=1,\;  \sum_{i=0}^\iy i r_i \le M\right\}$$
and argue that for each $M$, the map $F$ is Lipschitz from $\clv_M$ to $\bfell_2$. This `local' Lipschitz property plays an important role in the proof of Proposition \ref{prop:diffTightness}.

For characterization of limit points in the proof of the central limit theorem, one needs to argue that the
associated stochastic differential equation (SDE) in $\bfell_2$ (see \eqref{eqn:limitSDE}) has a unique weak solution in an appropriate class of processes.
It turns out that arguing this uniqueness among adapted processes with paths in $\mathbb{C}([0,T]:\bfell_2)$ (the space of continuous functions from
$[0,T]$ to $\bfell_2$) is not straightforward due to a lack of suitable regularity of the function
$G$ introduced in \eqref{eqn:opDFdef}. In particular, once more, the Lipschitz property of the map $x \mapsto G(x,\pi)$ (for a fixed $\pi \in \mathcal{P}(\mathbb{N}_0)$)
from $\bfell_2$ to itself is not immediate. The key observation here is that this map is Lipschitz when restricted
to the space
\begin{equation*}
\ti\bfell_2\doteq\{x\in\bfell_2:\sum_{j=0}^\iy j^2x_j^2<\iy,\sum_{j=0}^\iy x_j = 0\}.
\end{equation*}
This observation, together with the property that the limit points $X$ of $X^n = \sqrt{n}(\pi^n-\pi)$.
satisfy $X(t) \in \ti\bfell_2$ for all $t\ge 0$ almost surely, is key to the characterization of the limit points as
the unique solution of  the SDE \eqref{eqn:limitSDE} in a suitable class (see Proposition \ref{prop:DiffUniqueness}).

The paper is organized as follows. In Section \ref{sec:model} we give a precise mathematical formulation of our model and a statement of our main results. Specifically, Theorem \ref{thm:LLN} provides the convergence in probability of the empirical measure process  in $\mathbb{D}([0,T]: \cls)$  to the unique solution of the ODE defined in \eqref{eqn:ODE}. In Theorem \ref{thm:mainResult}, we present the main diffusion approximation result. This result says that the sequence of centered and scaled processes $X^n$, defined in \eqref{eqn:Xndef}, converges to the unique solution (in a suitable class) of the $\bfell_2$-valued SDE, driven by a cylindrical Brownian motion, given in \eqref{eqn:limitSDE}. In Section \ref{sec:supmarkmod} we record the corollaries  of these results to the special setting of power-of-$d$ schemes. We then proceed to the proofs of Theorems \ref{thm:LLN} and \ref{thm:mainResult}.
In Section \ref{sec:semiMartRep} we give a 
convenient  representation of the state processes through a countable number of time-changed unit rate Poisson processes.
Such Poisson representations have been used extensively (cf. \cite{kurtz1980representations,kang2014central,anderson2015stochastic}) in the study of diffusion approximations for pure jump processes.
Using this we obtain a  semimartingale decomposition (see \eqref{eqn:semimartrep})
that is central to our analysis. Section \ref{sec:LLN} is devoted to the proof of Theorem \ref{thm:LLN}.  In Section \ref{sec:piTight} we prove tightness of the sequence of state processes  
$\{\pi^n\}_{n\in\NN}$ (see Proposition \ref{prop:tightness}) and the proof of  Theorem \ref{thm:LLN} is completed  in Section \ref{sec:piLimitPoints}. 
Section \ref{sec:diffApprox} proves Theorem \ref{thm:mainResult}. 
In Section \ref{sec:mombds} we prove the propagation of integrability properties that was discussed earlier and in Section \ref{sec:diffTightness} (see Proposition \ref{prop:diffTightness}) we prove the key tightness property for the sequence of processes $\{X^n\}_{n\in \NN}$ which relies on the Lipschitz property of $F$, in the $\bfell_2$ norm, on $\clv_M$ (Lemma \ref{lem:lipschitz2}). Theorem \ref{thm:mainResult} is then proved in Section \ref{sec:diffConv}. Finally, in Section \ref{sec:example}, we present some  numerical results. In particular, we use our results to give numerical confidence intervals for several performance measures of interest and compare the results to those obtained from a direct simulation of the corresponding $n$-server systems.

\subsection{Notation}
The following notation will be used. 
Fix $T < \infty$. All stochastic processes will be considered over the time horizon $[0,T]$.  
We will use the notations $(X_t)_{0\leq t\leq T}$ and $(X(t))_{0\leq t\leq T}$ interchangeably for stochastic processes.
The space of probability measures on a Polish space $\SSS$, equipped with the topology of weak convergence, will be denoted by $\clp(\SSS)$.  When $\SSS = \NN_0$ we will metrize $\clp(\SSS)$ with the metric $d_0$ defined as
$$d_0(\mu, \nu) \doteq \sum_{j=0}^{\infty} \frac{|\mu(j)-\nu(j)|}{2^j}, \; \mu, \nu \in \clp(\NN_0).$$
For $\SSS$-valued random variables $X$, $X_n$, $n\ge 1$, convergence in distribution of $X_n$ to $X$ as $n\to \infty$ will be denoted as $X_n \Rightarrow X$.
The space of functions that are right continuous with left limits (RCLL) from $[0,T]$  to $\SSS$
will be denoted as $\DD([0,T]:\SSS)$  and equipped with the usual Skorohod topology.
Similarly $\CC([0,T]:\SSS)$  will be the space of continuous functions
from $[0,T]$  to $\SSS$,  equipped with the  uniform topology.
We will usually denote by $\kappa, \kappa_1, \kappa_2, \cdots$, the constants that appear in various estimates within a proof. The values of these constants may change from one proof to another and, unless stated otherwise, will take values in the set $(0,\iy)$.
Let $\bfell_2 = \{(a_j)_{j=0}^\iy|\sum_{j=0}^\iy a_j^2<\iy\}$  be the space of square summable real sequences. This space is a Hilbert space with inner product
\begin{equation*}
\lan x,y\ran_2 = \sum_{j=0}^\iy x_jy_j.
\end{equation*}
We denote the corresponding norm as  $\|\cdot\|_2$.
Similarly, $\bfell_1=\{(a_j)_{j=0}^\iy|\sum_{j=0}^\iy|a_j|<\iy\}$ and $\|\cdot\|_1$ is the norm on this Banach space.
The Hilbert-Schmidt norm of a Hilbert-Schmidt operator $A$  on $\bfell_2$ will be denoted $\|A\|_{\hs}$ (cf. Appendix \ref{sec:HSInfo}).
We denote by $I$ the identity operator.
For a Hilbert Space $\HH$,  $\clm^2_T(\HH)$ will denote the space of all $\HH$-valued continuous, square integrable martingales $M$, such that $M(0)=0$. For a real number $a$, $(a)_+$ will denote the positive part of $a$.

\section{Model Description and Main Result}\label{sec:model}
We consider a system with $n$ servers each with its own infinite capacity queue. In all, there are $I(n)$ equally sized files  stored over the $n$ servers. Each file is stored in equally sized pieces at $L$ servers such that any $k$ pieces can reconstruct the original file. The files are distributed such that each combination of $L$ servers has exactly $c$ files. This, in particular, implies $I(n)=c\binom{n}{L}$. Jobs arrive from outside  according to a Poisson process with rate $n\lambda$ and request one of the $I(n)$ files uniformly at random. Such a request corresponds to selection of one of the $\binom{n}{L}$ sets of $L$ servers, uniformly at random, which is the set of servers containing the pieces of the requested file. The job is then routed to the $k$ shortest queues among this set of $L$ servers. Each server processes queued jobs according to the first-in-first-out (FIFO) discipline. Processing times at each server are mutually independent and exponentially distribution with mean $k^{-1}$. 

Let $Q^n(t)=\{Q^n_i(t)\}_{i=1}^n$ where $Q^n_i(t)$ represents the length of the $i$-th queue at time $t$ and let $\pi^n(t)=\{\pi^n_i(t)\}_{i\in\NN_0}$ where $\pi_i^n(t)$ represents the proportion of queues with length exactly $i$ at time $t$. This can explicitly be written as
\begin{equation}
\pi^n_i(t)=\frac{1}{n}\sum_{j=1}^n1_{\{Q^n_j(t)=i\}}.\label{eq:eq102}
\end{equation}
We will assume for simplicity that $Q^n(0)=q^n$ is nonrandom and thus $\pi^n(0) = \frac{1}{n}\sum_{j=1}^n1_{\{q^n_j=i\}}$ is nonrandom as well.
We identify $\clp(\NN_0)$ with the infinite dimensional simplex $\cls=\{s\in\RR_+^\iy|\sum_{i=0}^\iy s_i=1\}$ and let $\cls_n=\frac{1}{n}\NN_0^\iy\cap\cls$. It follows that $\pi^n(t)\in\cls_n$ for all $t\in[0,T]$. Let $\Sigma=\{\ell=(\ell_i)_{i=1}^L\in\NN^L_0|\ell_1\leq\ell_2\leq\cdots\leq\ell_L\}$ and for $\ell\in\Sigma$ define $\rho_i(\ell)\doteq\sum_{j=1}^L1_{\{\ell_j=i\}},\ i\in\NN_0$. Roughly speaking, $\Sigma$ will represent the set of possible states for $L$ selected queues arranged by non-decreasing queue length. Note that each file will be stored at $L$ servers and that at any given time $t$ the queue lengths of these $L$ servers (up to a reordering) will correspond to an element in $\Sigma$. We will  refer to the elements of $\Sigma$ as ``queue length configurations''. 
Given a configuration $\bfell\in\Sigma$, $\rho_i(\ell)$ gives the number of queues of length $i$ (among the $L$ selected).
From the above description of the system it follows that
the empirical measure process, $\pi^n(t)$, is a continuous time Markov chain with state space $\cls_n$ and generator
\begin{equation}\label{eqn:generator}
\begin{aligned}
\cll^n f(r)
&= \frac{n\lambda}{\binom{n}{L}}\sum_{\ell\in\Sigma}\left(\prod_{i=0}^\iy\binom{nr_i}{\rho_i(\ell)}\right)\left[f\left(r+\frac{1}{n}\Del_\ell\right)-f(r)\right]\\
&\qquad + k\sum_{i=1}^\iy nr_i\left[f\left(r+\frac{1}{n}(e_{i-1}-e_i)\right)-f(r)\right],
\end{aligned}
\end{equation}
for $f:\cls_n\to\RR$ where
\begin{equation}\label{eqn:deldef}
\Del_\ell \doteq \sum_{i=1}^k e_{\ell_i+1}-\sum_{i=1}^k e_{\ell_i}
\end{equation}
and for $y\in\NN_0,\ e_y\in\bfell_2$ is a vector with 1 at the $y$-th coordinate and $0$ elsewhere. Here we use the standard conventions that $0^0=\binom{0}{0}=0!=1$, and $\binom{a}{b}=0$ when $a<b$. 
The above generator can be understood as follows. A typical term in the second expression corresponds to a jump as a result of a server, with exactly $i$ jobs queued, completing a job.
The term in the square brackets gives the change in value of $f$ as a result of such a jump and the prefactor $knr_i$ corresponds to the fact that servers process jobs at rate $k$ and there are in all $nr_i$ queues (prior to the jump) with exactly $i$ jobs.
The first expression in \eqref{eqn:generator} corresponds to a jump resulting from an arrival of a job to the system.
Typically, such an arrival makes a request for $L$ servers with queue length configuration
$\ell_1\le \ell_2 \le \cdots\le \ell_L$ and results in the jump $\frac{1}{n}\Del_\ell$. 
The sum in \eqref{eqn:deldef} only goes up to $k$ (instead of $L$) since only the smallest $k$ queues are affected by such a jump.
Since prior to the jump, there are $n r_i$ queues with exactly $i$ jobs, the overall rate associated with the configuration $\ell = \{\ell_1\le \ell_2 \le \cdots\le \ell_L\} \in \Sigma$ equals
$$\frac{n\lambda}{\binom{n}{L}}\left(\prod_{i=0}^\iy\binom{nr_i}{\rho_i(\ell)}\right).$$

In our setting the first entry in an element of $\bfell_2$ will typically correspond to the number of empty queues and thus we refer to it as the ``0-th'' coordinate and any $r\in\bfell_2$ will
correspond to a vector of the form $(r_0, r_1, \ldots)$. For notational convenience, for $r\in\bfell_2$ we set $r_{-1}\doteq 0$.

 
The main results in this work provide scaling limits for $\pi^n$. We first present the law of large numbers which describes the nominal state of the system for large $n$. 
Define, for $r\in\bfell_1$,
\begin{equation}\label{eqn:codingF}
F(r)
\doteq  \lambda L!\sum_{j=0}^\iy \bar{\zeta}^\del(j,r)e_j + k\sum_{j=0}^\iy[r_{j+1}-r_j]e_j+r_0e_0
\end{equation}
where
\begin{equation*}
 \bar{\zeta}^\del(j,r) \doteq \bar{\zeta}(j-1,r)-\bar{\zeta}(j,r)
\end{equation*}
and, adopting the convention that $\sum_{i=b}^ax_i=0$ for $a<b$,
\begin{equation}\label{eqn:zetabardef}
\bar{\zeta}(j,r)
\doteq \sum_{i_1=0}^{k-1}\frac{\left(\sum_{m=0}^{j-1}r_m\right)^{i_1}}{i_1!}\sum_{i_2=1}^{L-i_1}[i_2\wedge(k-i_1)]\frac{(r_j)^{i_2}}{i_2!}\frac{\left(\sum_{m=j+1}^{\iy}r_{m}\right)^{L-i_1-i_2}}{(L-i_1-i_2)!}.
\end{equation}
For $j\geq 0$, the quantities $k[r_{j+1}-r_j]$ in \eqref{eqn:codingF} roughly represent the rate at which the $j$-th coordinate of the state changes (in the limit) as a result of job-completions while the quantity
$\lambda L! (\bar{\zeta}(j-1,r)-\bar{\zeta}(j,r))$ represents a similar quantity as a result of job-arrivals. The various terms in \eqref{eqn:zetabardef} can be interpreted as follows. An arrival
to a queue with $j$ jobs implies that a queue length configuration vector $\ell = \{\ell_1\le \ell_2 \le \cdots\le \ell_L\}$ was selected which has the property that at least one of the $k$ smallest
$\ell_i$'s equals $j$, or equivalently, exactly $i_1$ ($i_1 =0, 1, \ldots , k-1$) of the smallest $L$ selected are less than $j$, $i_2$ ($i_2=1, \ldots L-i_1$) of these are equal to $j$, and $L-i_1-i_2$
are greater than $j$. The three ratios in \eqref{eqn:zetabardef} are contributions from these three types of queues. The term $[i_2\wedge(k-i_1)]$ is from the fact that only the smallest $k$ of the $L$ queues are affected. 

Also observe that for some $c_{\zeta} \in (0,\infty)$
\begin{equation}\label{eqn:czetabound}
	\bar \zeta(j,r) \le c_{\zeta} r_j \mbox{ for all } j \in \NN_0 \mbox{ and } r= (r_j)_{j=0}^{\infty}	\in \cls. 
\end{equation}
Thus the infinite sum in \eqref{eqn:codingF} is well defined since $\sum_{j=0}^{\infty} r_j =1$ and consequently $F$ is a well defined map from $\cls$ to $\bfell_1$.
A similar estimate shows that $F$ is a well defined map from $\bfell_1$ to $\bfell_1$ and $\sum_{j=0}^\iy F_j(r)=0$ for all $r\in\bfell_1$.

Consider the system of ODEs
\begin{equation}\label{eqn:ODE}
\dot{\pi}(t) = F(\pi(t)),\qquad \pi(0) = \pi_0
\end{equation}
where $F$ is defined in \eqref{eqn:codingF} and $\pi_0\in\cls$. The solution of the equation is a continuous map  $\pi: [0,T] \to \cls$ such that
\begin{equation}
	\label{eq:eq324}
\pi(t) = \pi_0 + \int_0^t F(\pi(s)) ds, \; t \in [0,T]
\end{equation}
where the integral on the right side is the classical Bochner integral which is well defined  since, from \eqref{eqn:codingF} and \eqref{eqn:czetabound},
\begin{equation}
	\label{eq:eq305}
\sup_{0\le s \le T}\|F(\pi(s))\|_1 \le \sup_{r \in \cls} \|F(r)\|_{1} < \infty .
\end{equation}
Equation \eqref{eqn:ODE} will characterize the law of large number limit of $\pi^n$.

The following result on the wellposedness of \eqref{eqn:ODE} will be shown in Section \ref{sec:piLimitPoints}.

\begin{proposition}\label{prop:uniqueness}
Let $\pi_0 \in \cls$. Then there exists a $\pi\in \CC([0,T]:\cls)$ that solves \eqref{eqn:ODE}. Furthermore, if $\pi, \tilde \pi$ are two elements of
$\CC([0,T]:\cls)$ solving \eqref{eqn:ODE} with $\pi(0)=\ti\pi(0)=\pi_0$, then $\pi=\tilde \pi$.
\end{proposition}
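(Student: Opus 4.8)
The plan is to separate uniqueness from existence, the only nontrivial input being a global Lipschitz bound for $F$ on $\cls$ measured in the $\bfell_1$-norm; granted that bound, uniqueness is an immediate Gronwall argument and existence follows from an explicit Euler scheme that never leaves $\cls$, so that no compactness in the infinite-dimensional state space is needed. Concretely, I would first establish the

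\medskip\noindent
\emph{Claim.} There is $\kappa\in(0,\iy)$ with $\|F(r)-F(\tilde r)\|_1\le\kappa\|r-\tilde r\|_1$ for all $r,\tilde r\in\cls$.

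\medskip\noindent
Granting this, uniqueness is immediate: if $\pi,\tilde\pi\in\CC([0,T]:\cls)$ both satisfy \eqref{eq:eq324} with $\pi(0)=\tilde\pi(0)$, then subtracting and taking the $\|\cdot\|_1$-norm gives $\|\pi(t)-\tilde\pi(t)\|_1\le\kappa\int_0^t\|\pi(s)-\tilde\pi(s)\|_1\,ds$ (both paths remain in $\cls$, where the Claim applies), and Gronwall forces $\pi=\tilde\pi$.

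To prove the Claim I would split $F$ as in \eqref{eqn:codingF} into its service part $r\mapsto k\sum_{j\ge0}[r_{j+1}-r_j]e_j+r_0e_0$, which is a bounded linear operator on $\bfell_1$ and hence Lipschitz, and its arrival part $\lambda L!\sum_{j\ge0}\bar\zeta^\del(j,r)e_j$. Since $\bar\zeta^\del(j,r)=\bar\zeta(j-1,r)-\bar\zeta(j,r)$, it suffices to bound $\sum_{j\ge0}|\bar\zeta(j,r)-\bar\zeta(j,\tilde r)|$. Each summand in \eqref{eqn:zetabardef} is a product of the three ``cumulative'' quantities $a_j(r)=\sum_{m<j}r_m$, $r_j$, and $b_j(r)=\sum_{m>j}r_m$ raised to exponents that are at most $k-1$, at least $1$, and at most $L-1$, respectively, and on $\cls$ all three lie in $[0,1]$. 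Replacing one factor at a time by its $\tilde r$-counterpart and using $|u^p-\tilde u^p|\le p|u-\tilde u|$ for $u,\tilde u\in[0,1]$ together with $|a_j(r)-a_j(\tilde r)|\vee|b_j(r)-b_j(\tilde r)|\vee|r_j-\tilde r_j|\le\|r-\tilde r\|_1$ uniformly in $j$, each resulting term is bounded by a constant times $\|r-\tilde r\|_1$ times either $\|r-\tilde r\|_1$-controlled factors or the factor $r_j^{i_2}$ (or $\tilde r_j^{i_2}$) with $i_2\ge1$; since $r_j^{i_2}\le r_j$ on $\cls$, the sum over $j$ is absorbed against $\sum_j r_j=1$. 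This is the same mechanism behind \eqref{eqn:czetabound}, now applied to differences; it is elementary but must be carried out carefully, so I would record it as a separate lemma.

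For existence I would run an Euler scheme, using that $\cls$ is forward-invariant for the discretised flow. From \eqref{eqn:codingF}, \eqref{eqn:czetabound}, and $\bar\zeta(j-1,r)\ge0$ on $\cls$, one gets $F_j(r)\ge -Cr_j$ for all $j$ and $r\in\cls$ with, e.g., $C=k+\lambda L!c_\zeta$ (in particular $F_j(r)\ge0$ whenever $r_j=0$); combined with the already-noted identity $\sum_j F_j(r)=0$, this shows that for any step size $h\le1/C$ the map $r\mapsto r+hF(r)$ sends $\cls$ into $\cls$. Taking $N$ steps of size $h=T/N$ with $N>CT$, the Euler iterates stay in $\cls$, their piecewise-linear interpolants $\pi^N$ stay in $\cls$ by convexity and are equi-Lipschitz in $\bfell_1$ (slope bounded by $\sup_{r\in\cls}\|F(r)\|_1<\iy$, cf.\ \eqref{eq:eq305}), and by the classical convergence argument for Euler's method (Lipschitz stability via the Claim plus a discrete Gronwall inequality) they form a Cauchy sequence in $\CC([0,T]:\bfell_1)$. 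Its limit $\pi$ lies in $\CC([0,T]:\cls)$ since $\cls$ is $\bfell_1$-closed, and passing to the limit in the integral form of the scheme (legitimate since $F$ is $\|\cdot\|_1$-continuous on $\cls$ by the Claim) shows that $\pi$ satisfies \eqref{eq:eq324}; as the $\bfell_1$-topology and the weak topology induced by $d_0$ coincide on $\cls=\clp(\NN_0)$ (Scheff\'e's lemma), this is a solution in $\CC([0,T]:\cls)$ as required. Alternatively, existence can be read off from the proof of Theorem~\ref{thm:LLN}: tightness of $\{\pi^n\}$ (Proposition~\ref{prop:tightness}) and the identification of limit points already produce an element of $\CC([0,T]:\cls)$ solving \eqref{eq:eq324}.

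The main obstacle is the Claim, i.e.\ the $\bfell_1$-Lipschitz bound for $F$ on $\cls$; once it is established, uniqueness and existence are routine. It is worth noting why $\bfell_1$ is the right home for this argument: the cumulative factors $a_j(r),b_j(r)$ are Lipschitz in the $\bfell_1$-norm uniformly in $j$ but not in the $\bfell_2$-norm, because $\|\cdot\|_1$ is not controlled by $\|\cdot\|_2$ on $\bfell_2$ — this is precisely the difficulty the authors flag for the fluctuation analysis, which there forces one to localise to $\clv_M$.
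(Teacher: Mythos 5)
Your uniqueness argument coincides with the paper's: the $\bfell_1$-Lipschitz bound on $\cls$ is exactly Lemma~\ref{lem:lipschitz}, proved there by the same ``replace one factor at a time'' device (the identity \eqref{eqn:tripleIneq}), the elementary bound $|a^i-b^i|\le i|a-b|$ on $[0,1]$, and the observation that the surviving $\ti r_j$-factor with exponent at least $1$ makes the sum over $j$ finite; Gronwall then gives uniqueness verbatim. The difference is in existence. The paper defers it to the proof of Theorem~\ref{thm:LLN}: tightness of $\{\pi^n\}$ (Proposition~\ref{prop:tightness}) plus identification of limit points as solutions of \eqref{eqn:ODE} produces an element of $\CC([0,T]:\cls)$ solving \eqref{eq:eq324} — this is precisely your ``alternative'' route. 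Your primary proposal, a deterministic Euler scheme, is a genuinely different and self-contained argument. Its key new ingredient — forward invariance of $\cls$ for the one-step map $r\mapsto r+hF(r)$ when $h\le 1/C$, obtained from $F_j(r)\ge -Cr_j$ (via $\bar\zeta(j-1,r)\ge 0$ and \eqref{eqn:czetabound}) together with $\sum_j F_j(r)=0$ — is correct and worth recording, and it has the aesthetic advantage of not making Proposition~\ref{prop:uniqueness} depend on the probabilistic machinery developed for the main theorem it underpins.

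One caveat you should spell out if you actually run the Euler route: the Cauchy property of the interpolants in $\CC([0,T]:\bfell_1)$ does not follow from equi-Lipschitzness via Arzel\`a--Ascoli, because $\cls$ is not compact in $\bfell_1$. You must instead carry out the discrete Gronwall comparison between two step sizes (e.g.\ $h$ and $h/2$), using the $\bfell_1$-Lipschitz constant from your Claim for propagation and $\sup_{r\in\cls}\|F(r)\|_1<\iy$ (equation \eqref{eq:eq305}) for the local truncation term, and then invoke completeness of $\CC([0,T]:\bfell_1)$; with that filled in, passing to the limit in the integral form and using $\bfell_1$-closedness of $\cls$ (and the equivalence on $\cls$ of $\bfell_1$-convergence with the $d_0$-topology) finishes the argument as you describe.
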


The next theorem gives a law of large numbers for the sequence $\{\pi^n\}_{n\in\NN}$.
Recall we take $\pi^n(0)$ to be nonrandom.
\begin{theorem}\label{thm:LLN}
Suppose that $\pi^n(0) \to \pi_0$, in $\cls$, as $n \to \infty$. 
 Then $\pi^n\to\pi$, in probability, in $\DD([0,T]:\cls)$ where $\pi$ is the unique solution of \eqref{eqn:ODE} in $\CC([0,T]:\cls)$.
\end{theorem}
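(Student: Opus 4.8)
The plan is to prove Theorem~\ref{thm:LLN} by the standard martingale-problem route for pure-jump Markov processes, split into the two steps announced in the excerpt: tightness of $\{\pi^n\}_{n\in\NN}$ in $\DD([0,T]:\cls)$, and identification of every weak subsequential limit as the (unique, by Proposition~\ref{prop:uniqueness}) solution of \eqref{eqn:ODE}. First I would exploit the semimartingale decomposition referred to in Section~\ref{sec:semiMartRep}: writing $\pi^n$ via time-changed unit-rate Poisson processes one gets, for a suitable class of test functions (coordinate maps $r\mapsto r_i$ and their finite linear combinations suffice since $\cls\subset\bfell_1$ is compact in $d_0$),
\begin{equation*}
\pi^n(t) = \pi^n(0) + \int_0^t b^n(\pi^n(s))\,ds + M^n(t),
\end{equation*}
where $b^n$ is the drift read off from the generator \eqref{eqn:generator} and $M^n$ is a square-integrable martingale. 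The key elementary estimate is that, because $\binom{nr_i}{\rho_i(\ell)}\big/\binom{n}{L}\cdot n = r_i^{\rho_i(\ell)}\cdot(\text{polynomial correction of order }1/n)$ and the total jump rate per server is bounded (arrival contribution $\le \lambda L$, departure contribution $\le k$), one has a uniform bound $\|b^n(r)\|_1\le\kappa$ on $\cls_n$ and $b^n(r)\to F(r)$ uniformly as $n\to\iy$ with $F$ as in \eqref{eqn:codingF}; combinatorially $\Del_\ell$ in \eqref{eqn:deldef} collapses to exactly the shift structure of the $k[r_{j+1}-r_j]$ and $\lambda L!\,\bar\zeta^\del(j,r)$ terms once one counts which of the $L$ sampled queues are $<j$, $=j$, $>j$ — this is the content of the interpretation given after \eqref{eqn:zetabardef}.

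For tightness I would use the Aldous/Joffe--Métivier criterion: each coordinate process $t\mapsto\pi^n_i(t)$ has uniformly (in $n$ and in stopping times) bounded drift and a predictable quadratic variation of order $1/n$ (jumps are of size $1/n$ and occur at rate $O(n)$), so $\E\langle\pi^n_i\rangle_T\le\kappa_i/n\to 0$, giving $\sup_t|M^n_i(t)|\to 0$ in probability by Doob, and the compact-containment condition is automatic since $\cls$ is compact in the metric $d_0$. Combining these yields tightness of $\{\pi^n_i\}$ for each $i$ in $\DD([0,T]:\RR)$ and hence, since $d_0$-convergence is equivalent to coordinatewise convergence, tightness of $\{\pi^n\}$ in $\DD([0,T]:\cls)$; moreover any limit point is continuous because the maximal jump size of $\pi^n$ is $O(1/n)$. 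This part I expect to be routine given the boundedness afforded by the compact state space $\cls$ — it is genuinely easier here than in the diffusion theorem, where the unbounded $\bfell_2$-geometry forces the moment propagation work of Section~\ref{sec:mombds}.

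To identify the limit, I would take a subsequence along which $\pi^n\Rightarrow\pi^*$ in $\DD([0,T]:\cls)$, pass $n\to\iy$ in the semimartingale identity, use $M^n\to 0$ (from the $O(1/n)$ quadratic variation), $\pi^n(0)\to\pi_0$ by hypothesis, and the uniform convergence $b^n\to F$ together with continuity of $F$ on $\cls$ (which follows from \eqref{eqn:czetabound} and dominated convergence) to conclude that $\pi^*$ satisfies \eqref{eq:eq324} almost surely; by Proposition~\ref{prop:uniqueness} the limit is the deterministic function $\pi$, so the full sequence converges, and convergence in distribution to a constant upgrades to convergence in probability. The one genuine technical point — the main obstacle, such as it is — is justifying the interchange of limit and integral/sum: one must check that $b^n(\pi^n(s))\to F(\pi^*(s))$ can be integrated against $ds$ in the limit despite $b^n$ depending on $n$ both through its functional form and through its (random, Skorohod-converging) argument. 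I would handle this by splitting $b^n(\pi^n(s))-F(\pi^*(s)) = \big(b^n(\pi^n(s))-F(\pi^n(s))\big) + \big(F(\pi^n(s))-F(\pi^*(s))\big)$, bounding the first term by $\sup_{r\in\cls_n}\|b^n(r)-F(r)\|_1\to 0$ uniformly, and controlling the second via the continuity of $F$ on the compact set $\cls$ together with the fact that $\pi^n(s)\to\pi^*(s)$ for all but countably many $s$ (continuity of the limit) — then dominated convergence, using \eqref{eq:eq305}, finishes it; the Bochner integral being $\bfell_1$-valued and $\cls$ compact keeps all of this legitimate without further moment hypotheses.
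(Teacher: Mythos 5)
Your proposal follows the same overall blueprint as the paper (semimartingale decomposition via time-changed Poisson processes, tightness, identification of limit points as ODE solutions, and invoking Proposition~\ref{prop:uniqueness} plus convergence-to-a-constant to upgrade to convergence in probability). The identification step in particular is essentially identical to the paper's argument in Section~\ref{sec:piLimitPoints}: the paper introduces $V^n(t) = A^n(t) - \int_0^t F(\pi^n(s))\,ds$, shows $\sup_t\|V^n(t)\|_2 \le \kappa/n$ via \eqref{eqn:zetatobar}, and then passes to the limit using Skorohod representation, continuity of each $F_j$ on $\cls$, and dominated convergence — which is exactly your split $b^n(\pi^n(s)) - F(\pi^*(s)) = (b^n - F)(\pi^n(s)) + F(\pi^n(s)) - F(\pi^*(s))$ with the uniform bound \eqref{eq:eq305}.

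The one place you genuinely diverge from the paper is tightness. The paper (Proposition~\ref{prop:tightness}) invokes the Graham/Sznitman exchangeability reduction: tightness of the empirical measure process in $\DD([0,T]:\cls)$ follows from tightness of a single tagged-particle process $Q_1^n$ in $\DD([0,T]:\NN)$, which is immediate because $Q_1^n$'s jumps are embedded in a Poisson process of rate $\lambda L + k$. You instead propose a coordinate-wise Aldous/Joffe--M\'etivier argument. This can be made to work, but as stated there is a small gap: coordinate-wise tightness of $\{\pi^n_i\}_{n}$ in $\DD([0,T]:\RR)$ does not by itself yield tightness in $\DD([0,T]:\cls)$, since the Skorohod topology on $\DD([0,T]:\cls)$ uses a single time change across all coordinates rather than a product of coordinate Skorohod topologies. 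What saves you is that the constants in your Aldous estimates are uniform in $i$ (every coordinate is in $[0,1]$, every jump is $O(1/n)$, the per-coordinate drift is bounded), so the $2^{-j}$ weights in $d_0$ allow you to truncate the tail uniformly and control the $d_0$-modulus of continuity of the full process directly — but that summation step needs to be said explicitly rather than absorbed into ``$d_0$-convergence is coordinatewise.'' The paper's tagged-particle route sidesteps this entirely and is slicker; yours is more elementary and self-contained. Either way, $\CC$-tightness and $M^n\Rightarrow 0$ are handled as you describe and as the paper does, via $\E\langle M^n\rangle(T) = O(1/n)$ and Doob.
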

Proof of  Theorem \ref{thm:LLN} will be given in Section \ref{sec:piLimitPoints}. 

Our second main result studies the fluctuations of $\pi^n$ from its law of large number limit.
Consider
\begin{equation}\label{eqn:Xndef}
X^n(t) = \sqrt{n}[\pi^n(t)-\pi(t)], \; t \in [0,T].
\end{equation} 
where $\pi^n$ is the state process introduced in \eqref{eq:eq102}  and $\pi$ is the unique solution of \eqref{eqn:ODE} in $\CC([0,T]:\cls)$.

We will show that, under conditions, $X^n$ converges in distribution in $\DD([0,T]: \bfell_2)$ to a stochastic process that can be  characterized 
as the solution of a stochastic differential equation (SDE) of the following form.
\begin{equation}\label{eqn:limitSDE}
dX(t) = G(X(t),\pi(t))dt+a(t)dW(t), \qquad X(0)=x_0.
\end{equation}
The equation is again interpreted in the integrated form,
\begin{equation}\label{eqn:limitSDEb}
X(t) = x_0 + \int_0^t G(X(s),\pi(s))ds + \int_0^t a(s)dW(s), \qquad \; t \in [0,T].
\end{equation}
In the above equations, $a$ is a measurable map from $[0,T]$ to the space of Hilbert-Schmidt operators from $\ell_2$ to $\ell_2$ such that $\int_0^T \|a(t)\|_{\hs}^2 dt <\infty$, where $\|\cdot\|_{\hs}$
denotes the Hilbert-Schmidt norm (see Appendix \ref{sec:HSInfo}), and $W$ is a $\bfell_2$-cylinderical Brownian motion. Precise definitions are given in Appendix \ref{sec:CylWeiner}, but roughly speaking, $W$ can be identified with an iid 
sequence $\{\beta_i\}_{i\in \NN_0}$ of standard real Brownian motions over $[0,T]$ and the stochastic integral $\int_0^t a(s) dW(s)$ represents a $\bfell_2$-valued Gaussian martingale $M(t)$ given as
\begin{equation}\label{eq:eq410}
M_i(t) = \sum_{j=0}^{\infty} \int_0^t A_{ij}(s) d\beta_j(s), \; t \in [0,T],\; i \in \NN_0,
\end{equation}
where $A_{ij}(s) = \lan e_i, a(s)e_j\ran_2$, $s\in [0,T]$, $i,j \in \NN_0$.
We refer the reader to Chapter 4 of \cite{da2014stochastic} for construction and properties of the stochastic integral in \eqref{eqn:limitSDEb}.  The Hilbert-Schmidt and integrability property of $a$  ensure that
the infinite sum in \eqref{eq:eq410} converges.  The operator $a(t)$ is determined from the system parameters and the law of large number limit $\pi$
in Theorem \ref{thm:LLN} as the symmetric square root of the following non-negative trace class operator
\begin{equation}\label{eqn:squarematrix}
\Phi(t)
\doteq \lambda L!\sum_{\ell\in\Sigma}\Del_\ell\Del_\ell^T\prod_{i=0}^\iy\frac{\pi_i(t)^{\rho_i(\ell)}}{\rho_i(\ell)!}+k\sum_{i=1}^\iy (e_{i-1}-e_i)(e_{i-1}-e_i)^T\pi_i(t).
\end{equation}
The trace class property of $\Phi(t)$ and the integrability of the squared Hilbert-Schmidt norm of $a(t)$ are shown in Lemma \ref{lem:traceOp}. 
Define the space $\ti\bfell_2 \subset \bfell_2$  as
\begin{equation}\label{eqn:tibfelldef}
\ti\bfell_2\doteq\{x\in\bfell_2:\sum_{j=0}^\iy j^2x_j^2<\iy,\sum_{j=0}^\iy x_j = 0\}.
\end{equation}
In \eqref{eqn:limitSDE} $G$ is a map from $\ti\bfell_2 \times \cls$ to $\bfell_2$  defined as
\begin{equation}\label{eqn:opDFdef}
G_i(x,r)
\doteq \frac{\partial}{\partial u}F_i(r+ux)\Big|_{u=0}\qquad i\in\NN_0,\ u\in \RR.
\end{equation}
One of the difficulties in the analysis is that $G$ as a map from $\bfell_2 \times \cls$ to $\bfell_2$ is not well behaved and we need to restrict attention to the smaller space 
$\ti\bfell_2 \times \cls$ in order to get unique solvability of \eqref{eqn:limitSDE}. 
Note that under the condition $\sum_{j=0}^\iy j^2x_j^2<\iy$, the series $\sum_{j=0}^\iy |x_j|<\iy$ and thus the series $\sum_{j=0}^\iy x_j$ is convergent. Additionally, the right side of \eqref{eqn:opDFdef} is well defined for every $x\in\ti\bfell_2$ and $r\in\cls$, since for each $j\in\NN_0$ and $r\in\bfell_1$ with $\sum_{i=0}^\iy r_i=1$, $r\mapsto F_j(r)$ is a polynomial in $(r_0,r_1,\ldots,r_{j+1})$ given as
\begin{equation*}
F_j(r)= \lambda L![\bar{\zeta}(j-1,r)-\bar{\zeta}(j,r)]+k(r_{j+1}-r_j)
\end{equation*}
where
\begin{equation*}
\bar{\zeta}(j,r)
= \sum_{i_1=0}^{k-1}\frac{\left(\sum_{m=0}^{j-1}r_m\right)^{i_1}}{i_1!}\sum_{i_2=1}^{L-i_1}[i_2\wedge(k-i_1)]\frac{(r_j)^{i_2}}{i_2!}\frac{\left(1-\sum_{m=0}^{j}r_{m}\right)^{L-i_1-i_2}}{(L-i_1-i_2)!}.
\end{equation*}
Also, from \eqref{eqn:codingF} and \eqref{eqn:zetabardef} it is easily checked that there is a $c\in(0,\iy)$ such that for all $x\in\ti\bfell_2$ and $r\in\cls$
\begin{equation*}
|G_i(x,r)|\leq c\left[|x_{i-1}|+|x_i|+|x_{i+1}|+(r_{i-1}+r_i)\sum_{m=0}^\iy|x_{m}|\right].
\end{equation*}
This in particular implies that $G(x,r)\doteq (G_i(x,r))_{i\in\NN_0}\in\bfell_1\subset\bfell_2$ for all $(x,r)\in\ti\bfell_2\times\cls$.

The following result shows the well-posedness of \eqref{eqn:limitSDEb}. The definition of an $\bfell_2$-cylindrical Brownian motion is given in Section \ref{sec:CylWeiner}.
\begin{proposition}\label{prop:DiffUniqueness}
	There exists a filtered probability space $(\Om,\clf,\PP,\{\clf_t\})$ on which is given 
 a $\bfell_2$-cylindrical Brownian motion  $W$ and a continuous $\{\clf_t\}$-adapted process $(X(t))_{0\leq t\leq T}$ with sample paths in $\CC([0,T]:\bfell_2)$ that satisfies the integral equation \eqref{eqn:limitSDEb} and is such that $X(t)\in\ti\bfell_2\subset\bfell_2$ for all $t\in[0,T]$ almost surely. Furthermore if $\{\ti X_t\}_{0\leq t\leq T}$ is another such process then $\ti X_t=X_t$ for all $t\in[0,T]$, almost surely.
\end{proposition}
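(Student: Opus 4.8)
\emph{Plan of proof.}
The plan is to view \eqref{eqn:limitSDEb} as a \emph{linear} stochastic evolution equation with additive noise on the space $\ti\bfell_2$ of \eqref{eqn:tibfelldef}, now equipped with the weighted inner product $\lan x,y\ran_{\ti}\doteq\sum_{j=0}^{\iy}(1+j^2)x_jy_j$ and norm $\|\cdot\|_{\ti}$, rather than with the $\bfell_2$ norm inherited from the ambient space. First I would check that $(\ti\bfell_2,\|\cdot\|_{\ti})$ is a separable Hilbert space: the weighted space $\{x\in\RR^{\iy}:\sum_j(1+j^2)x_j^2<\iy\}$ is complete, the estimate $\|x\|_1\le|x_0|+\big(\sum_{j\ge1}j^2x_j^2\big)^{1/2}\big(\sum_{j\ge1}j^{-2}\big)^{1/2}\le\kappa\|x\|_{\ti}$ gives a continuous embedding $\ti\bfell_2\hookrightarrow\bfell_1$, and the linear constraint $\sum_jx_j=0$ therefore cuts out a closed subspace. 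I would also record at the outset that $x_0\in\ti\bfell_2$ and that, under the standing second-moment hypothesis on $\pi^n(0)$ (hence on $\pi_0$), the moment-propagation bounds of Section \ref{sec:mombds} yield $M_{\pi}\doteq\sup_{0\le s\le T}\sum_{i}i^2\pi_i(s)<\iy$.

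Two analytic facts drive everything. \emph{(i)} For each fixed $r\in\cls$ the map $x\mapsto G(x,r)$ is linear in $x$ (since $G_i(x,r)=\sum_{m}\partial_{r_m}F_i(r)\,x_m$, with $F_i$ a polynomial in $(r_0,\ldots,r_{i+1})$), and I would show it is a \emph{bounded} linear operator on $\ti\bfell_2$, with $\|G(\cdot,\pi(s))\|_{\mathrm{op}}\le C_{\pi}$ for all $s\in[0,T]$, where $C_{\pi}$ depends only on $\lambda,L,k$ and $M_{\pi}$. This follows from the pointwise bound on $|G_i(x,r)|$ recorded just after \eqref{eqn:opDFdef}: the ``local'' part $|x_{i-1}|+|x_i|+|x_{i+1}|$ contributes at most $\kappa\|x\|_{\ti}$ after an index shift (using $(i+1)^2\le2+2i^2$), while the ``nonlocal'' part $(r_{i-1}+r_i)\sum_m|x_m|$ is controlled via $\|x\|_1\le\kappa\|x\|_{\ti}$ and $\sum_i(1+i^2)(r_{i-1}+r_i)^2\le\kappa(1+\sum_ii^2r_i)$ (valid for $r\in\cls$ since $0\le r_i\le1$); one arrives at $\|G(x,r)\|_{\ti}\le\kappa(1+\sum_ii^2r_i)^{1/2}\|x\|_{\ti}$, and a Pettis-type argument makes $s\mapsto G(x,\pi(s))$ measurable into $\ti\bfell_2$. \emph{(ii)} I would show that $a(s)$, the symmetric square root of $\Phi(s)$ in \eqref{eqn:squarematrix}, maps $\bfell_2$ into $\ti\bfell_2$ and is Hilbert--Schmidt as such, with $\|a(s)\|^2_{\mathrm{HS}(\bfell_2\to\ti\bfell_2)}=\sum_i(1+i^2)\Phi_{ii}(s)\le\kappa(1+M_{\pi})$ uniformly in $s$. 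The zero-coordinate-sum property passes from each $\Del_{\ell}$ and each $e_{i-1}-e_i$ to $\Phi(s)$, to its eigenvectors (which lie in $\mathrm{Range}\,\Phi(s)$), and thence, using the absolute ($\bfell_1$) convergence supplied by the weighted-trace bound, to $a(s)e_j$; the weighted-trace bound itself follows from $\|\Del_{\ell}\|^2_{\ti}\le\kappa(1+\ell_k^2)$, the multinomial identity $\sum_{\ell\in\Sigma}L!\prod_m\pi_m(s)^{\rho_m(\ell)}/\rho_m(\ell)!=1$, and $\sum_{\ell\in\Sigma}L!\prod_m\pi_m(s)^{\rho_m(\ell)}/\rho_m(\ell)!\,\ell_k^2\le L\,M_{\pi}$ (because $\ell_k$ is dominated by the maximum of $L$ i.i.d.\ $\pi(s)$-samples). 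Together with Lemma \ref{lem:traceOp}, this makes $M(t)\doteq\int_0^t a(s)\,dW(s)$ a continuous, square-integrable, $\ti\bfell_2$-valued martingale with $\E\sup_{t\le T}\|M(t)\|_{\ti}^2<\iy$.

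Granting \emph{(i)} and \emph{(ii)}, \eqref{eqn:limitSDEb} becomes, in $\ti\bfell_2$, the linear additive-noise equation $X(t)=x_0+\int_0^tG(X(s),\pi(s))\,ds+M(t)$, and existence and pathwise uniqueness of a solution with continuous $\ti\bfell_2$-valued paths and $\E\sup_{t\le T}\|X(t)\|_{\ti}^2<\iy$ follow from the classical Picard argument on the Banach space of $\{\clf_t\}$-adapted continuous $\ti\bfell_2$-valued processes $Y$ with $\E\sup_{t\le T}\|Y(t)\|_{\ti}^2<\iy$: setting $X^{(0)}\equiv x_0$ and $X^{(m+1)}(t)=x_0+\int_0^tG(X^{(m)}(s),\pi(s))\,ds+M(t)$, the bound in \emph{(i)} and Jensen's inequality give $\E\sup_{t\le T'}\|X^{(m+1)}(t)-X^{(m)}(t)\|_{\ti}^2\le C_{\pi}^2T'\int_0^{T'}\E\|X^{(m)}(s)-X^{(m-1)}(s)\|_{\ti}^2ds$ for $T'\le T$, which iterates to a summable bound and produces a limit $X$ solving the equation; since each $X^{(m)}$ has continuous $\ti\bfell_2$-valued paths, so does $X$, whence $X(t)\in\ti\bfell_2\subset\bfell_2$ for all $t$ a.s. Any probability space carrying an $\bfell_2$-cylindrical Brownian motion suffices, the construction being pathwise in $W$. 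For the uniqueness assertion, given any $\ti X$ with continuous $\bfell_2$-valued paths solving \eqref{eqn:limitSDEb} on the same space and with $\ti X(t)\in\ti\bfell_2$ for all $t$ a.s., I would first note that, since $X$ and $\ti X$ are driven by the same $W$, the stochastic-integral terms cancel in the difference and $X(t)-\ti X(t)=\int_0^t\big[G(X(s),\pi(s))-G(\ti X(s),\pi(s))\big]ds$ is a $\ti\bfell_2$-valued Bochner integral; then, after a preliminary a priori estimate (based on $\|G(x,\pi(s))\|_{\ti}\le C_\pi\|x\|_{\ti}$ and continuity of $M$ in $\ti\bfell_2$) showing any such solution has locally bounded $\ti\bfell_2$-norm, a localization at $\tau_N=\inf\{t:\|X(t)\|_{\ti}\vee\|\ti X(t)\|_{\ti}\ge N\}$ (with $\tau_N\uparrow T$) and the boundedness in \emph{(i)} give $\|X(t\wedge\tau_N)-\ti X(t\wedge\tau_N)\|_{\ti}\le C_{\pi}\int_0^{t\wedge\tau_N}\|X(s)-\ti X(s)\|_{\ti}ds$, so Gronwall's lemma forces $X=\ti X$ on $[0,T]$ a.s.

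I expect the main obstacle to be fact \emph{(i)}: showing that $G(\cdot,\pi(s))$, which (as the introduction stresses) is not well behaved as a map $\bfell_2\to\bfell_2$, is in fact a bounded linear operator on $\ti\bfell_2$ uniformly in $s\in[0,T]$ --- the delicate point being the control of the nonlocal term $(r_{i-1}+r_i)\sum_m|x_m|$ through the embedding $\ti\bfell_2\hookrightarrow\bfell_1$ and the uniform second-moment bound $M_{\pi}<\iy$ (itself a consequence of the moment-propagation results of Section \ref{sec:mombds}). A closely related subtlety, needed so that the noise does not push the process out of the state space, is verifying in \emph{(ii)} that $a(s)$ respects both defining constraints of $\ti\bfell_2$; both points are exactly what make $\ti\bfell_2$, rather than $\bfell_2$, the correct space in which to pose and solve \eqref{eqn:limitSDEb}.
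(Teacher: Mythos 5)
Your proposal is a genuinely different route: the paper proves existence non-constructively, by taking $X$ to be a subsequential weak limit of the $X^n$ in Theorem \ref{thm:mainResult}, and proves uniqueness by establishing the Lipschitz bound \eqref{eqn:Glip}, namely $\sup_{0\le t\le T}\|G(x,\pi(t))-G(\ti x,\pi(t))\|_2\le C\|x-\ti x\|_2$ for $x,\ti x\in\ti\bfell_2$, where the norm on \emph{both} sides is the ordinary $\bfell_2$ norm and the constant $C$ depends only on the uniform first-moment bound $\sup_t\sum_jj\pi_j(t)<\infty$. Your plan instead equips $\ti\bfell_2$ with the weighted norm $\|\cdot\|_{\ti}$, shows $G(\cdot,\pi(s))$ is a bounded linear operator on that weighted Hilbert space, and runs both existence (Picard) and uniqueness (Gronwall) in $\|\cdot\|_{\ti}$. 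That is a reasonable and more self-contained plan for existence, but it carries two gaps the paper's approach does not have.

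First, your operator bound $\|G(\cdot,\pi(s))\|_{\mathrm{op}}\le C_\pi$ (and the weighted Hilbert--Schmidt bound on $a(s)$) requires $M_\pi\doteq\sup_{0\le s\le T}\sum_i i^2\pi_i(s)<\infty$, which you assert follows from Section \ref{sec:mombds}. It does not: that section, after sending $n\to\infty$, yields the uniform \emph{first}-moment bound \eqref{eqn:firstMomSquaredBoundLim} and only the \emph{time-integrated} second-moment bound $\int_0^T\sum_i i^2\pi_i(s)\,ds<\infty$ (via \eqref{eqn:secmomentbound} and Fatou). A uniform-in-$t$ second-moment bound on $\pi$ is plausible via a separate ODE argument, but you would have to supply it. Second, and more seriously, your uniqueness argument via weighted Gronwall requires, for an arbitrary competitor $\ti X$ satisfying only the stated hypotheses ($\bfell_2$-continuous paths, $\ti X(t)\in\ti\bfell_2$ pointwise), that $t\mapsto\|\ti X(t)\|_{\ti}$ be locally bounded so that $\tau_N\uparrow T$. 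This is not a consequence of the hypotheses: $\bfell_2$-continuity makes $t\mapsto\|\ti X(t)\|_{\ti}$ only lower semicontinuous, and pointwise finiteness does not preclude it from being unbounded on every nonempty open interval. Your ``preliminary a priori estimate'' to remedy this is circular, since bounding $\|\ti X(t)\|_{\ti}$ from the equation is precisely the Gronwall step you are trying to initialize. The paper avoids this entirely: because \eqref{eqn:Glip} is a Lipschitz estimate in the $\bfell_2$ norm (with no weighted norm appearing on the right-hand side), the Gronwall inequality is for $g(t)\doteq\|X(t)-\ti X(t)\|_2$, which is continuous on the compact interval $[0,T]$ by hypothesis and hence bounded, so no localization is needed. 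That observation --- that it suffices for $G$ to be Lipschitz from $(\ti\bfell_2,\|\cdot\|_2)$ to $(\bfell_2,\|\cdot\|_2)$, using the zero-sum structure of $\ti\bfell_2$ for the $\xi^3$ piece and only the first moment of $\pi$ --- is the crux of the paper's uniqueness proof, and it is what your weighted-norm formulation loses.
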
 
The above result establishes weak existence and pathwise uniqueness of \eqref{eqn:limitSDEb}. By a standard argument (cf. \cite[Section IV.1]{IkedaWatanabe})  it follows that
\eqref{eqn:limitSDEb}  has a unique weak solution. 
We can now present our main result on fluctuations of $\pi^n$. Recall that $X^n(0)=\sqrt{n}(\pi^n(0)-\pi_0)$ is deterministic.
\begin{theorem}\label{thm:mainResult}
Suppose $\sup_{n\in\NN}\sum_{j=0}^\iy j^2\pi^n_j(0)<\iy$ and $\pi^n(0) \to \pi_0$ in $\cls$ as $n\to \infty$.
Let $\pi$ be the unique solution of \eqref{eqn:ODE} and, with $X^n$ defined as in \eqref{eqn:Xndef}, $X^n(0)\to x_0$ in $\bfell_2$. In addition, suppose that
\begin{equation}\label{eqn:XinitBound}
\sup_{n\in\NN}\sum_{j=0}^\iy j^2(X^n_j(0))^2<\iy.
\end{equation} 
Then $X^n\Rightarrow X$ in $\DD([0,T]:\bfell_2)$ where $X$ is the unique weak solution to \eqref{eqn:limitSDE} given by Proposition \ref{prop:DiffUniqueness}.
\end{theorem}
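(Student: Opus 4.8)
The plan is to run the classical martingale-problem program for diffusion limits of Markov processes, carried out in the Hilbert space $\bfell_2$. The starting point is the semimartingale decomposition \eqref{eqn:semimartrep} of $\pi^n$ obtained from the time-changed Poisson representation of Section~\ref{sec:semiMartRep}: writing $\pi^n(t)=\pi^n(0)+\int_0^t b^n(\pi^n(s))\,ds+M^{n,\pi}(t)$, where $M^{n,\pi}$ is an $\bfell_2$-valued martingale with jumps of order $n^{-1}$ and $b^n$ is the drift read off from the generator \eqref{eqn:generator}, one obtains
\[
X^n(t)=X^n(0)+\int_0^t \sqrt n\,\bigl[b^n(\pi^n(s))-F(\pi(s))\bigr]\,ds+\sqrt n\,M^{n,\pi}(t).
\]
Before anything else I would record the a~priori estimates that everything rests on: using the propagation-of-integrability results of Section~\ref{sec:mombds}, the hypothesis $\sup_n\sum_j j^2\pi^n_j(0)<\infty$, the hypothesis \eqref{eqn:XinitBound}, and a Gronwall argument, one gets $\sup_n\E\sup_{0\le t\le T}\|X^n(t)\|_2^2<\infty$, $\sup_n\E\sup_{0\le t\le T}\sum_j j^2\pi^n_j(t)<\infty$, and $\sup_n\E\sup_{0\le t\le T}\sum_j j^2(X^n_j(t))^2<\infty$. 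A consequence is that, for $M$ large, $\pi^n(s)$ and $\pi(s)$ lie in $\clv_M$ with high probability uniformly in $n$, so the local Lipschitz property of $F$ on $\clv_M$ (Lemma~\ref{lem:lipschitz2}) is available throughout.

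Next I would establish tightness of $\{X^n\}_{n\in\NN}$ in $\DD([0,T]:\bfell_2)$, which is Proposition~\ref{prop:diffTightness}. One checks the criteria for Hilbert-space-valued semimartingales from \cite{joffe1986weak,metivier1982semimartingales}: the compact-containment condition follows from the $\sum_j j^2(\cdot)^2$ moment bound, which confines $\{X^n(t)\}$ to relatively compact subsets of $\bfell_2$; the finite-variation part is controlled by the Lipschitz-on-$\clv_M$ estimate together with Gronwall; and the martingale part is handled by an Aldous-type argument applied to its predictable quadratic variation, which is the explicit operator-valued integral coming from \eqref{eqn:generator} and is bounded in trace norm by the moment estimates above.

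The main obstacle is the characterization of the limit points. Let $X$ be the weak limit of $X^n$ along a subsequence. First one shows $X(t)\in\ti\bfell_2$ for all $t$ a.s.: the constraint $\sum_j X^n_j(t)=0$ holds for every $n$ (since $\pi^n(t)$ and $\pi(t)$ are probability vectors) and passes to the limit, while the $\sum_j j^2(X^n_j(t))^2$ bound yields the summability condition in \eqref{eqn:tibfelldef}. For the drift, split $\sqrt n[b^n(\pi^n(s))-F(\pi(s))]=\sqrt n[b^n(\pi^n(s))-F(\pi^n(s))]+\sqrt n[F(\pi^n(s))-F(\pi(s))]$; the first term vanishes in a time-averaged, $\bfell_2$ sense because the ratios of binomial coefficients in \eqref{eqn:generator} agree with the monomials appearing in \eqref{eqn:zetabardef} up to $O(1/n)$. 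For the second, since each $F_j$ is a polynomial of degree $L$ in $(r_0,\dots,r_{j+1})$ and $\pi^n(s)=\pi(s)+X^n(s)/\sqrt n$, a Taylor expansion gives $\sqrt n[F_j(\pi^n(s))-F_j(\pi(s))]=G_j(X^n(s),\pi(s))+R^n_j(s)$, with a remainder whose $\bfell_2$ norm is $O(n^{-1/2})$; here the bound $\|x\|_1\le\kappa(\sum_j j^2x_j^2)^{1/2}$ for $x\in\ti\bfell_2$ (Cauchy--Schwarz) is exactly what controls the cross terms. The LLN $\pi^n\to\pi$ of Theorem~\ref{thm:LLN} together with the local Lipschitz continuity of $x\mapsto G(x,\pi(s))$ on $\ti\bfell_2$ then lets one pass to the limit and identify the drift as $\int_0^t G(X(s),\pi(s))\,ds$. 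For the martingale part, $\sqrt n\,M^{n,\pi}$ is a square-integrable martingale whose predictable quadratic variation, by Theorem~\ref{thm:LLN} and the trace-class estimate of Lemma~\ref{lem:traceOp}, converges to $\int_0^t\Phi(s)\,ds$ with $\Phi$ as in \eqref{eqn:squarematrix}; a martingale central limit theorem in $\bfell_2$, using that the jumps vanish, identifies the limit as the Gaussian martingale $\int_0^t a(s)\,dW(s)$ driven by an $\bfell_2$-cylindrical Brownian motion $W$, with $a(s)$ the symmetric square root of $\Phi(s)$.

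Putting these together, every subsequential limit $X$ is a continuous $\bfell_2$-valued process with $X(t)\in\ti\bfell_2$ for all $t$ a.s. that solves \eqref{eqn:limitSDEb} with $X(0)=x_0=\lim_n X^n(0)$ and is driven by a cylindrical Brownian motion. By the weak uniqueness established in Proposition~\ref{prop:DiffUniqueness}, all such limit points have the same law, namely that of $X$; hence the full sequence converges, $X^n\Rightarrow X$ in $\DD([0,T]:\bfell_2)$. I expect the delicate points to be (i) making the Taylor-remainder and binomial-versus-monomial estimates precise in the $\bfell_2$ norm, which is precisely where the $\sum_j j^2(\cdot)^2$ moment hypotheses enter, and (ii) the compact-containment/tightness verification, since in the infinite-dimensional space $\bfell_2$ boundedness alone does not yield relative compactness.
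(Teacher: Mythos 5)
Your plan follows the paper's proof in all essentials: the same semimartingale decomposition $X^n = X^n(0)+\bar A^n+\bar M^n$, the propagation-of-integrability estimates of Section~\ref{sec:mombds} feeding into the Joffe--M\'etivier tightness criteria with the $\clv_M$-Lipschitz property of $F$ (Lemma~\ref{lem:lipschitz2}), the Taylor-expansion step $\sqrt n[F(\pi^n)-F(\pi)]=G(X^n,\pi)+R^n$ with the remainder controlled via the $\sum_j j^2(X^n_j)^2$ bound and Cauchy--Schwarz, identification of $\lan\bar M_i,\bar M_j\ran$ with $\int_0^\cdot\lan e_i,\Phi(s)e_j\ran_2\,ds$, and finally weak uniqueness from Proposition~\ref{prop:DiffUniqueness} (using the Lipschitz property of $G(\cdot,\pi)$ on $\ti\bfell_2$) to upgrade subsequential convergence to full convergence. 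The only cosmetic difference is that the paper obtains the Gaussian martingale by first passing to a subsequential limit $\bar M$, computing its predictable quadratic covariation, and then invoking the martingale representation theorem in Hilbert space (Theorem~8.2 of \cite{da2014stochastic}) to produce the cylindrical Brownian motion, rather than invoking a martingale central limit theorem directly; the two routes are equivalent here.
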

Proposition \ref{prop:DiffUniqueness} and Theorem \ref{thm:mainResult} will be proved in Section \ref{sec:diffApprox}. In Section \ref{sec:example} we will describe how Theorems \ref{thm:LLN} and \ref{thm:mainResult} can be used for numerical computation of various performance measures using simulation of diffusion processes.

\subsection{Supermarket Model}
\label{sec:supmarkmod}
Consider a system of $n$ servers, each with its own queue. Jobs arrive in the system according to a Poisson process with rate $n\lambda$. When a job enters the system, $d$ servers are chosen uniformly at random and the job is routed to the shortest of the $d$ selected queues. All servers process jobs according to the FIFO discipline. Service times are mutually independent and exponentially distributed with mean 1. This model has been well studied and is known as Power-of-$d$ routing or the ``Supermarket Model'' (see \cite{vvedenskaya1996queueing,mitzenmacher2001power,graham2000chaoticity}). The model is a special case of the system considered in the current work, corresponding to  $L=d$ and $k=1$. Theorems \ref{thm:LLN} and \ref{thm:mainResult} then provide, as corollaries, the following law of large numbers and central limit theorem for the Power-of-$d$ routing scheme. 

Define by $\pi^n_d$ the empirical measure process of queue lengths in the Power-of-$d$ system. For $r\in\bfell_1$, define 	
\begin{equation*}
F_d(r)
\doteq  \lambda \left[\sum_{i=1}^d\binom{d}{i}r_{j-1}^i\left(\sum_{m=j}^\iy r_m\right)^{d-i}-\sum_{i=1}^d\binom{d}{i}r_j^i\left(\sum_{m=j+1}^\iy r_m\right)^{d-i}\right]e_j + \sum_{j=0}^\iy[r_{j+1}-r_j]e_j.
\end{equation*}
The following is a direct corollary of Theorem \ref{thm:LLN}.
\begin{corollary}\label{cor:pdLLN}
Suppose that $\pi_d^n(0) \to \pi_d(0)$, in $\cls$, as $n \to \infty$.  Then
$\pi_d^n\to\pi_d$, in probability, in $\DD([0,T]:\cls)$ where $\pi_d$ is the unique solution in $\CC([0,T]:\cls)$ to the following ODE
\begin{equation*}
\dot{\pi}_d(t) = F_d(\pi_d(t)),\qquad \pi_d(0) = \pi_0.
\end{equation*}
\end{corollary}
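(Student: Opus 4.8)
The plan is to deduce Corollary \ref{cor:pdLLN} directly from Theorem \ref{thm:LLN} by specializing the general MDS parameters to $L=d$ and $k=1$, and checking that the general map $F$ of \eqref{eqn:codingF} reduces to $F_d$. First I would set $k=1$ in the definitions \eqref{eqn:codingF}--\eqref{eqn:zetabardef}. With $k=1$, the outer sum over $i_1$ in \eqref{eqn:zetabardef} collapses to the single term $i_1=0$, and the factor $[i_2\wedge(k-i_1)] = [i_2 \wedge 1] = 1$ for every $i_2 \ge 1$. Hence
\begin{equation*}
\bar\zeta(j,r) = \sum_{i_2=1}^{d} \frac{(r_j)^{i_2}}{i_2!}\,\frac{\left(\sum_{m=j+1}^{\iy} r_m\right)^{d - i_2}}{(d-i_2)!},
\end{equation*}
so that $d!\,\bar\zeta(j,r) = \sum_{i=1}^d \binom{d}{i} r_j^{\,i}\left(\sum_{m=j+1}^\iy r_m\right)^{d-i}$. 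Therefore $\lambda L!\,\bar\zeta^\del(j,r) = \lambda d!\,[\bar\zeta(j-1,r) - \bar\zeta(j,r)]$ equals precisely the bracketed arrival term in the definition of $F_d$, and the job-completion term $k\sum_j [r_{j+1}-r_j]e_j + r_0 e_0$ with $k=1$ matches the service term in $F_d$. (One small bookkeeping point: the $r_0 e_0$ correction term in \eqref{eqn:codingF} arises from the $j=0$ boundary where a completion from a length-$1$ queue produces a length-$0$ queue; I would note that with the convention $r_{-1}=0$ the general formula and the $F_d$ formula agree coordinate-wise, so $F = F_d$ on $\bfell_1$ when $(L,k)=(d,1)$.)

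Next I would verify that the modelling assumptions of the general framework are met by the supermarket model. Taking $c=1$ (one file per each of the $\binom{n}{d}$ server-subsets), the hypothesis ``each combination of $L$ servers has exactly $c$ files'' holds, $I(n) = \binom{n}{d}$, and a uniformly-random file request is exactly a uniformly-random choice of $d$ servers. Service times are exponential with mean $k^{-1} = 1$, matching the supermarket model. Spawning $k=1$ job routed to the shortest of the $d$ queues is the power-of-$d$ routing rule. Thus $\pi_d^n$ is exactly the process $\pi^n$ of the general model with $(L,k) = (d,1)$, and its generator is \eqref{eqn:generator} with these parameters.

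With these two identifications in hand, the corollary is immediate: Proposition \ref{prop:uniqueness} applied with $F = F_d$ gives existence and uniqueness in $\CC([0,T]:\cls)$ of the solution $\pi_d$ to $\dot\pi_d = F_d(\pi_d)$, $\pi_d(0) = \pi_0$; and Theorem \ref{thm:LLN}, whose single hypothesis $\pi^n(0) \to \pi_0$ in $\cls$ is exactly the stated hypothesis $\pi_d^n(0) \to \pi_d(0)$, yields $\pi_d^n \to \pi_d$ in probability in $\DD([0,T]:\cls)$. There is no real obstacle here — the content is entirely in Theorem \ref{thm:LLN}; the only thing requiring care is the algebraic reduction $F = F_d$, in particular correctly tracking the $i_2 \wedge (k-i_1)$ multiplicity and the $j=0$ boundary term, which I would present as a short explicit computation rather than appeal to by inspection.
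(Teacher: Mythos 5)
Your proof is correct and takes the same route the paper intends: specialize the general model to $(L,k)=(d,1)$, verify $F$ reduces to $F_d$, and invoke Theorem~\ref{thm:LLN}; the paper states the corollary as ``direct'' and omits exactly this verification. One small clarification: the reconciliation of the $r_0 e_0$ boundary term is not a consequence of the $r_{-1}=0$ convention (which governs the arrival contribution, not the service contribution); rather, the printed formula for $F_d$ is simply missing both the leading $\sum_{j=0}^\infty$ and the $+r_0 e_0$ correction, so after adjusting for these typos $F(r) = F_d(r)$ coordinate-wise as you claim, and your argument goes through.
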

\begin{remark}
This result  has been established in \cite{graham2000chaoticity} (see Theorem 3.4 therein). In particular, it is easy to verify that $v_m(t)\doteq\sum_{j=m}^\iy(\pi_d(t))_j$ is the same function as in (3.9) of \cite{graham2000chaoticity} (see also \cite{vvedenskaya1996queueing}).
\end{remark}
Our second corollary studies the fluctuations of $\pi^n_d$ from its law of large number limit. Consider
\begin{equation*}
X^n_d(t) = \sqrt{n}[\pi^n_d(t)-\pi_d(t)], \; t \in [0,T].
\end{equation*} 
Analogous to $a(t)$ introduced in \eqref{eqn:limitSDE}, let $a_d(t)$ be the symmetric square root of the following non-negative operator
\begin{equation}\label{eqn:squarematrixd}
\begin{aligned}
\Phi_d(t)
&\doteq \lambda \sum_{j=0}^\iy(e_{j+1}-e_{j})(e_{j+1}-e_{j})^T\left(\sum_{i=1}^d\binom{d}{i}[(\pi_d)_j(t)]^i\left(\sum_{m=j+1}^\iy(\pi_d)_m(t)\right)^{d-i}\right)\\
&\qquad+\sum_{j=1}^\iy (e_{j-1}-e_j)(e_{j-1}-e_j)^T(\pi_d)_j(t).
\end{aligned}
\end{equation}
Analogous to $G$ in \eqref{eqn:opDFdef}, let $G_d$ be a map from $\ti\bfell_2 \times \cls$ to $\bfell_2$, where $\ti\bfell_2$ is as in \eqref{eqn:tibfelldef}, defined as
\begin{equation}\label{eqn:opDFdefd}
(G_d)_i(x,r)
\doteq \frac{\partial}{\partial u}(F_d)_i(r+ux)\Big|_{u=0}\qquad i\in\NN_0,\ u\in \RR.
\end{equation}
In the special case that $d=2$, this function simply reduces to
\begin{equation*}
(G_2)_i(x,r) = 2\lambda\sum_{m=i}^\iy[x_{i-1}r_m+r_{i-1}x_m-x_ir_{m+1}-r_ix_{m+1}]+(x_{i+1}-x_i).
\end{equation*}
The following result is immediate from Theorem \ref{thm:mainResult}. 
\begin{corollary}\label{cor:pddiff}
	Suppose $\sup_{n\in\NN}\sum_{j=0}^\iy j^2(\pi^n_d)_j(0)<\iy$ and $\pi^n_d(0) \to \pi_0$ in $\cls$ as $n\to \infty$.
Also, suppose  $X_d^n(0)=\sqrt{n}[\pi_d^n(0)-\pi_0]\to x_0$ in probability in $\bfell_2$ and that
\begin{equation*}
\sup_{n\in\NN}\sum_{j=0}^\iy j^2((X_d^n)_j(0))^2<\iy.
\end{equation*} 
Then $X_d^n\Rightarrow X_d$ in $\DD([0,T]:\bfell_2)$ where $X_d$ is the unique weak solution to \eqref{eqn:limitSDE} with values in $\ti\bfell_2$, with $G$ replaced by $G_d$ defined by \eqref{eqn:opDFdefd} and $a(t)$ replaced by $a_d(t)$ which is given as the symmetric square root of the operator $\Phi_d(t)$ in \eqref{eqn:squarematrixd}.
\end{corollary}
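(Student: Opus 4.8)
\emph{Proof of Corollary~\ref{cor:pddiff} (outline).}
The plan is to deduce the statement directly from Theorem~\ref{thm:mainResult} by recognizing the Power-of-$d$ system as the instance of the general MDS model with $L=d$ and $k=1$, and then checking that every object entering Theorem~\ref{thm:mainResult} ($F$, $G$, $\Phi$, $a$ and the space $\ti\bfell_2$) specializes to its Power-of-$d$ counterpart. First I would note that, under the structural assumptions of Section~\ref{sec:model} with $(L,k)=(d,1)$, a file request amounts to selecting one of the $\binom{n}{d}$ subsets of $d$ servers uniformly at random and routing the single spawned job to the shortest of those queues, while each server processes at rate $k=1$; this is precisely the supermarket dynamics. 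Moreover, the generator \eqref{eqn:generator} depends on the model only through $(\lambda,L,k)$ (the parameters $c$ and $I(n)$ cancel), so the empirical measure process $\pi^n_d$ of the Power-of-$d$ system is, in law, the process $\pi^n$ of the general model with parameters $(\lambda,d,1)$. Hence Theorems~\ref{thm:LLN} and \ref{thm:mainResult} apply to $\pi^n_d$ and $X^n_d$ without change, and the moment and convergence conditions imposed in Corollary~\ref{cor:pddiff} are exactly the translations of those in Theorem~\ref{thm:mainResult}.

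It then remains to match the formulas. Setting $k=1$ in \eqref{eqn:zetabardef} forces $i_1=0$, makes the first ratio equal to $1$, and reduces $i_2\wedge(k-i_1)$ to $1$, so that $\lambda L!\,\bar\zeta(j,r)=\lambda\sum_{i=1}^{d}\binom{d}{i}r_j^{i}(\sum_{m=j+1}^{\iy}r_m)^{d-i}$; substituting this into \eqref{eqn:codingF} identifies $F$ with $F_d$ (with the analogous boundary convention at coordinate $0$). This gives Corollary~\ref{cor:pdLLN} and, in particular, identifies the centering $\pi_d$ with the solution $\pi$ of \eqref{eqn:ODE} for the parameters $(\lambda,d,1)$. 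For the fluctuation statement, the space $\ti\bfell_2$ in \eqref{eqn:tibfelldef} is unchanged, and by \eqref{eqn:opDFdefd} the map $G_d$ is by construction the directional derivative of $F_d$, hence coincides with $G$ of \eqref{eqn:opDFdef} for the parameters $(\lambda,d,1)$; the displayed $d=2$ formula then follows by a direct differentiation, using that $\sum_j x_j=0$ for $x\in\ti\bfell_2$. Finally I would check the diffusion coefficient: with $k=1$ one has $\Del_\ell=e_{\ell_1+1}-e_{\ell_1}$, which depends on $\ell=(\ell_1\le\cdots\le\ell_d)\in\Sigma$ only through its smallest entry, and grouping the arrival terms of \eqref{eqn:squarematrix} according to the value $j$ of that entry and using the multinomial identity
\[
\sum_{\substack{\ell\in\Sigma\\ \ell_m\ge j\ \forall m}}\ \prod_{i\ge j}\frac{\pi_i(t)^{\rho_i(\ell)}}{\rho_i(\ell)!}\;=\;\frac{1}{d!}\left(\sum_{i\ge j}\pi_i(t)\right)^{d},
\]
together with the binomial expansion of $(\pi_j(t)+\sum_{m=j+1}^{\iy}\pi_m(t))^d$, one recovers exactly $\Phi_d(t)$ in \eqref{eqn:squarematrixd}, so that $a=a_d$ as symmetric square roots; the trace-class and square-integrability properties required for \eqref{eqn:limitSDEb} are already supplied by Lemma~\ref{lem:traceOp}. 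Well-posedness of \eqref{eqn:limitSDEb} with $(G_d,a_d)$ in place of $(G,a)$ is the special case $(L,k)=(d,1)$ of Proposition~\ref{prop:DiffUniqueness}. Combining these observations with Theorem~\ref{thm:mainResult} yields $X^n_d\Rightarrow X_d$ in $\DD([0,T]:\bfell_2)$, as claimed.

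The key point is that this argument introduces no new analytic content: tightness, characterization of limit points, propagation of moment bounds, and uniqueness of the limit SDE are all inherited verbatim from the proofs of Theorem~\ref{thm:mainResult} and Proposition~\ref{prop:DiffUniqueness}. The one thing requiring genuine care — and what I expect to be the main obstacle — is the combinatorial bookkeeping indicated above: translating the sums over the ordered configuration set $\Sigma$ into sums over multisets of size $d$ and matching them against the binomial coefficients appearing in $F_d$, $G_d$ and $\Phi_d$, while keeping track of the boundary term at coordinate $0$.
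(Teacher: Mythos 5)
Your proposal is correct and follows the same route as the paper, which simply states that Corollary~\ref{cor:pddiff} is immediate from Theorem~\ref{thm:mainResult} upon setting $(L,k)=(d,1)$; you merely spell out the specialization of the generator, $F$, $G$, and $\Phi$ (and the $\Sigma$-to-multiset bookkeeping and multinomial identity) that the paper leaves implicit. Your combinatorial reductions of $\bar\zeta$ and of the arrival part of $\Phi$ to $F_d$ and $\Phi_d$ are accurate, and the remaining ingredients (moment bounds, tightness, uniqueness) are indeed inherited verbatim from the $(L,k)$ case.
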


\section{Semimartingale Representation}\label{sec:semiMartRep}
In this section we write the state processes using compensated time-changed Poisson processes to give a semimartingale representation for the system. Let $\{N_\ell,\ \ell\in\Sigma\}$ and $\{D_i,\ i\in\NN_0\}$ be collections of mutually independent unit rate Poisson processes. The process $N_\ell$ will be used to represent the stream of jobs requesting files which are stored at servers with queue length configuration (immediately before the time of arrival of the request) $\ell=(\ell_1,\ldots,\ell_L)$. Similarly $D_i$ will represent the stream of jobs completed by servers whose queue length (immediately before the time of completion) is equal to $i$. From the form of the generator in \eqref{eqn:generator} we see that the state process $\pi^n$ can be expressed as,
\begin{equation*}
\pi^n(t)
= \pi^n(0) + \frac{1}{n}\sum_{\ell\in\Sigma}\Del_\ell N_\ell\left(\int_0^t\frac{n\lambda }{\binom{n}{L}}\prod_{i=0}^\iy\binom{n\pi^n_i(s)}{\rho_i(\ell)}ds\right)+\frac{1}{n}\sum_{i=1}^\iy (e_{i-1}-e_i)D_i\left(k\int_0^tn\pi^n_i(s)ds\right).
\end{equation*}

By adding and subtracting the compensators of the Poisson processes one can write the state process as a semimartingale. Namely,  
\begin{equation}\label{eqn:semimartrep}
\pi^n(t)=\pi^n(0)+A^n(t)+M^n(t)
\end{equation}
where
\begin{equation}\label{eqn:Adef}
A^n(t) 
\doteq\sum_{\ell\in\Sigma}\Del_\ell\int_0^t\frac{\lambda}{\binom{n}{L}}\prod_{i=0}^\iy\binom{n\pi^n_i(s)}{\rho_i(\ell)}ds
+k\sum_{i=1}^\iy (e_{i-1}-e_i)\int_0^t\pi^n_i(s)ds
\end{equation}
and
\begin{equation}\label{eqn:MartRep}
\begin{aligned}
M^n(t)&\doteq \sum_{\ell\in\Sigma}\frac{1}{n}\Del_\ell N_\ell\left(\frac{n\lambda}{\binom{n}{L}}\int_0^t\prod_{i=0}^\iy\binom{n\pi^n_i(s)}{\rho_i(\ell)}ds\right)- \sum_{\ell\in\Sigma}\Del_\ell \frac{\lambda}{\binom{n}{L}}\int_0^t\prod_{i=0}^\iy\binom{n\pi^n_i(s)}{\rho_i(\ell)}ds\\
&\qquad+ \sum_{i=1}^\iy \frac{1}{n}(e_{i-1}-e_i) D_i\left(k\int_0^tn\pi^n_i(s)ds\right)- k\sum_{i=1}^\iy (e_{i-1}-e_i) \int_0^t\pi^n_i(s)ds.
\end{aligned}
\end{equation}
From \eqref{eqn:AjzetaExp} and \eqref{eqn:zetaineq}, it follows that for some $c_\zeta\in(0,\iy)$
\begin{equation*}
A_j^n(t) \leq \int_0^t\left(\frac{\lambda}{\binom{n}{L}}c_\zeta n^L[\pi^n_{j-1}(s)+\pi_j^n(s)]+k[\pi_{j+1}^n(s)+\pi^n_j(s)]\right)ds
\end{equation*}
for all $t\in[0,T]$, $n\in\NN$, and $j\in\NN_0$. Thus, there exists a $\kappa\in(0,\iy)$ such that
\begin{equation*}
\begin{aligned}
\sum_{j=0}^\iy A_j^n(t)^2
&\leq \kappa\sum_{j=0}^\iy\int_0^t[\pi_{j-1}^n(s)^2+\pi_{j+1}^n(s)^2+\pi^n_j(s)^2]ds
\leq 3\kappa t
\end{aligned}
\end{equation*}
for all $t\in[0,T]$. Consequently both $M^n(t)$ and $A^n(t)$ take values in $\bfell_2$. A similar argument shows that
$A^n(t)$ in fact takes values in $\bfell_1$.

Similarly, using \eqref{eqn:semimartrep} and \eqref{eq:eq324} for $\pi(t)$, we can express $X^n$ as a semimartingale through the equation
\begin{equation}\label{eqn:semiMart}
X^n(t) = X^n(0) + \bar{A}^n(t) + \bar{M}^n(t)
\end{equation}
where 
\begin{equation}\label{eqn:Abardef}
\bar{A}^n(t) = \sqrt{n}\left[A^n(t)-\int_0^tF(\pi(s))ds\right]
\end{equation}
and $\bar{M}^n(t) = \sqrt{n}M^n(t)$. We note that there is a natural filtration $\{\clf^n_t\}_{0\leq t\leq T}$ on the probability space where the processes $N_\ell,\ D_i$, and $\pi^n$ are defined such that $A^n,\ M^n,\ \pi^n,\ X^n,\ \bar{M}^n,\ \bar{A}^n$ are RCLL processes adapted to the filtration and $M^n,\ \bar{M}^n$ are $\{\clf^n_t\}$-local martingales.

\section{Law of Large Numbers}\label{sec:LLN}
In this section we present the proof of Theorem \ref{thm:LLN}. First, in Section \ref{sec:piTight}, we use the semimartingale representation from Section \ref{sec:semiMartRep} to prove a key tightness property (see Proposition \ref{prop:tightness}). 
Then, in Section \ref{sec:piLimitPoints}, we prove the unique solvability of \eqref{eqn:ODE} and complete the proof of Theorem \ref{thm:LLN} by proving convergence of $\pi^n$ to the unique solution of \eqref{eqn:ODE} in $\CC([0,T]:\cls)$.

\subsection{Tightness}\label{sec:piTight}
In this section we prove tightness of $\{(\pi^n,M^n)\}_{n\in\NN}$. We first recall the notion of $\CC$-tightness.
\begin{definition}
	Let $(\clz, d_{\clz})$ be a Polish space.
For $z\in\DD([0,T]:\clz)$ let $$j_T(z)\doteq\sup_{0\le t\leq T}d_{\clz}(z(t),z(t-)).$$ We say a tight sequence of $\DD([0,T]:\clz)$-valued random variables $\{Z_n\}_{n\in\NN}$ is $\CC$-tight if $j_T(Z_n)\Rightarrow 0$.
\end{definition}
If $Z_n,Z$ are $\DD([0,T]:\clz)$-valued random variables and $Z_n\Rightarrow Z$ then $\PP(Z\in\CC([0,T]:\clz))=1$ if and only if $\{Z_n\}_{n\in\NN}$ is $\CC$-tight \cite{ethier2009markov}.
The following proposition proves the $\CC$-tightness of $\{\pi^n\}_{n\in\NN}$ and convergence of $M^n$ to the zero process. 
\begin{proposition}\label{prop:tightness}
Suppose that $\pi^n(0) \to \pi_0$, in $\cls$, as $n \to \infty$. Then $\{(\pi^n, M^n)\}_{n\in\NN}$ is a $\CC$-tight sequence of
$\DD([0,T]:\cls \times \bfell_2)$-valued random variables. Furthermore, 
 $M^n\Rightarrow 0$ in $\DD([0,T]:\bfell_2)$.
\end{proposition}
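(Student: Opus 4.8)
The plan is to establish $\CC$-tightness of $\{(\pi^n, M^n)\}$ by treating the martingale part $M^n$ and the drift part $A^n$ separately, using the semimartingale decomposition \eqref{eqn:semimartrep}. First I would control $M^n$: since $M^n = \sqrt n \cdot (\text{compensated Poisson terms})/\sqrt n$... more precisely, writing $M^n$ as in \eqref{eqn:MartRep}, its predictable quadratic variation (as an $\bfell_2$-valued martingale) can be computed coordinatewise from the jump rates. Each arrival jump has size $\frac1n\Del_\ell$ and occurs at rate $O(n)$, while each departure jump has size $\frac1n(e_{i-1}-e_i)$ and occurs at rate $kn\pi_i^n(s)$; summing $\|\text{jump}\|_2^2 \times (\text{rate})$ gives $\E\|M^n(t)\|_2^2 = O(1/n)$, uniformly in $t\in[0,T]$, using $\sum_i \pi_i^n(s) = 1$ and the bound \eqref{eqn:czetabound} to control the arrival contribution (here one uses $\binom{n\pi^n_i(s)}{\rho_i(\ell)} \le (n\pi^n_i(s))^{\rho_i(\ell)}/\rho_i(\ell)!$ to bound the arrival rate by $\kappa n$, so the $1/n^2$ from the squared jump size wins). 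A maximal inequality (Doob) for Hilbert-space martingales then gives $\E\sup_{t\le T}\|M^n(t)\|_2^2 = O(1/n) \to 0$, which yields both $M^n \Rightarrow 0$ in $\DD([0,T]:\bfell_2)$ and, since the jumps of $M^n$ are bounded by $\kappa/n$, its $\CC$-tightness.

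Next I would handle $\pi^n = \pi^n(0) + A^n + M^n$. Since $\pi^n(0) \to \pi_0$, it suffices to show $\{A^n\}$ is $\CC$-tight in $\DD([0,T]:\bfell_2)$ (then $\pi^n$, being a sum of a convergent sequence of constants, a $\CC$-tight sequence, and a sequence converging to $0$, is $\CC$-tight; one then checks the limit points live in $\cls$, which is closed in $\bfell_2$, and $\cls$-valued tightness follows from $\bfell_2$-valued tightness plus the fact that $\pi^n(t) \in \cls$ for all $t$). For $A^n$, the excerpt already records the pointwise bound $A^n_j(t) \le \int_0^t(\kappa[\pi^n_{j-1}(s)+\pi^n_j(s)] + k[\pi^n_{j+1}(s)+\pi^n_j(s)])\,ds$ and hence $\sum_j A^n_j(t)^2 \le 3\kappa t$; more importantly, the same estimate gives, for $0 \le s \le t \le T$,
\begin{equation*}
\|A^n(t) - A^n(s)\|_2^2 \le \kappa' |t-s| \int_s^t \Big(\textstyle\sum_j [\pi^n_{j-1}(u)^2 + \pi^n_j(u)^2 + \pi^n_{j+1}(u)^2]\Big)\,du \le 3\kappa'(t-s)^2.
\end{equation*}
So $t \mapsto A^n(t)$ is uniformly (in $n$) Lipschitz in the $\bfell_2$-norm, with constant $\sqrt{3\kappa'}$. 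This uniform equicontinuity, combined with $A^n(0) = 0$ and the fact that each $A^n(t)$ lies in the $\bfell_2$-ball of radius $\sqrt{3\kappa T}$ (a bounded, hence relatively compact in the weak topology, but here I want norm-relative-compactness — one gets it from the coordinatewise bound $A^n_j(t)^2 \le \kappa'' \int_0^T[\pi^n_{j-1}+\pi^n_j+\pi^n_{j+1}]$, which is summable in $j$ uniformly in $n$, giving a uniform tail bound $\sum_{j \ge N} A^n_j(t)^2 \to 0$ as $N \to \infty$ uniformly in $n, t$, i.e. the closure of $\{A^n(t)\}$ is compact in $\bfell_2$), verifies the Arzel\`a--Ascoli criterion for $\CC$-tightness in $\CC([0,T]:\bfell_2)$.

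The main obstacle I anticipate is the verification of the uniform $\bfell_2$-tail estimate for $A^n(t)$ — i.e. showing $\sup_{n,t} \sum_{j \ge N} A^n_j(t)^2 \to 0$ as $N \to \infty$ — since the coordinatewise bound involves $\pi^n_{j-1}(s) + \pi^n_j(s) + \pi^n_{j+1}(s)$ integrated over $[0,T]$, and one must argue that $\sum_j \int_0^T \pi^n_j(s)\,ds \le T$ controls this tail uniformly; the square versus the first power needs a little care ($A^n_j(t)^2 \le (\text{const})\,(\int_0^T [\pi^n_{j-1}+\pi^n_j+\pi^n_{j+1}])^2$, but since each $\pi^n_j \le 1$ the square is dominated by the first power, so the tail is summable with uniform control). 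Once this compactness-of-ranges plus equicontinuity is in hand, the standard criterion (e.g. \cite{ethier2009markov}, or the Hilbert-space tightness criteria of \cite{joffe1986weak}) closes the argument, and the $\CC$-tightness of the pair $(\pi^n, M^n)$ follows since $\CC$-tightness of each coordinate implies $\CC$-tightness of the pair in the product space.
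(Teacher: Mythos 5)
Your treatment of $M^n$ is essentially identical to the paper's: compute the predictable quadratic variation using the arrival and departure rates, bound the arrival contribution via $\binom{n\pi^n_i}{\rho_i(\ell)}\le (n\pi^n_i)^{\rho_i(\ell)}/\rho_i(\ell)!$, deduce $\E\sup_t\|M^n(t)\|_2^2 = O(1/n)$ by Doob's inequality, and note that jumps are $O(1/n)$ for $\CC$-tightness. That part is fine.

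The tightness of $\{\pi^n\}$ is where there is a genuine gap. You try to run Arzel\`a--Ascoli for $\{A^n\}$ in $\CC([0,T]:\bfell_2)$: equicontinuity is fine (your estimate $\|A^n(t)-A^n(s)\|_2^2 \le \kappa(t-s)^2$ is correct), but the compactness-of-ranges step does not close. Your bound is
\begin{equation*}
\sum_{j\ge N} A^n_j(t)^2 \;\le\; \kappa\, T \int_0^T \sum_{j\ge N-1}\pi^n_j(u)^2\,du,
\end{equation*}
and you then say ``since each $\pi^n_j\le 1$ the square is dominated by the first power, so the tail is summable with uniform control.'' But $\sum_{j\ge N-1}\pi^n_j(u)$ is only bounded by $1$, not by a quantity tending to $0$ uniformly in $n$ and $u$ as $N\to\iy$. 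Indeed, if a macroscopic fraction of the mass of $\pi^n(u)$ sits near a single large index, the tail does not vanish. Under the hypothesis of the Proposition you only know $\pi^n(0)\to\pi_0$ (no moment conditions), so preventing mass from escaping to infinity at positive times is exactly the point at issue, and your estimate does not address it. A bounded set in $\bfell_2$ is not relatively compact, so this step of the argument fails as written. (The moment propagation needed to make a tail bound of this kind work is what Lemma \ref{lem:mombd} supplies, but that requires the stronger second-moment hypothesis used only later, in Theorem \ref{thm:mainResult}.)

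The paper sidesteps this entirely by working in the weaker topology $(\cls,d_0)$ and invoking the exchangeability trick of Sznitman/Graham: tightness of the empirical-measure process $\{\pi^n\}$ in $\DD([0,T]:\cls)$ reduces to tightness of the single tagged queue-length process $\{Q_1^n\}$ in $\DD([0,T]:\NN_0)$. The latter is immediate because the jump times of $Q_1^n$ are dominated by a rate-$(\lambda L + k)$ Poisson process (server $1$ is chosen by an arrival at rate $n\lambda\cdot L/n=\lambda L$, and serves at rate $k$), which controls both the modulus of continuity and the size of $Q_1^n(t)$ once tightness of $Q_1^n(0)$ is supplied by $\pi^n(0)\to\pi_0$. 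This route is both shorter and avoids the need for any uniform $\bfell_2$-tail control; note also that your attempt to prove tightness directly in the $\bfell_2$-norm topology on $\cls$ is strictly stronger than the statement requires, since $\cls$ carries the $d_0$ metric, which is coarser than $\|\cdot\|_2$.
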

\begin{proof}
We first prove the second statement  by arguing that $\E\sup_{0\leq s\leq T}\|M^n(s)\|_2^2\to 0$ as $n\to\iy$. For this, from Doob's inequality, it suffices to show $\E|\lan M^n\ran(T)|\to0$ as $n\to\iy$ where
\begin{equation*}
\lan M^n\ran(s)
\doteq\sum_{j=0}^\iy\lan M^n_j\ran(s),\qquad s\in[0,T].
\end{equation*}
From \eqref{eqn:MartRep} and observing
\begin{equation*}
\sum_{i=1}^\iy \lan e_j,(e_{i-1}-e_i)(e_{i-1}-e_i)^Te_j\ran_2 \pi^n_i(s)=\pi^n_{j+1}(s)+\pi^n_j(s)
\end{equation*}
it follows that
\begin{equation}\label{eqn:Mjs}
\lan M^n_j\ran(t)
=   \frac{\lambda}{n\binom{n}{L}}\int_0^tZ(j,n\pi^n(s))ds+ \frac{k}{n}\int_0^t[\pi^n_{j+1}(s)+\pi^n_j(s)]ds.
\end{equation}
where
\begin{equation}\label{eqn:incoming1}
Z(j,n\pi^n(s))=\sum_{\ell\in\Sigma}\lan e_j,\Del_\ell\Del_\ell^Te_j\ran_2 \prod_{i=0}^\iy\binom{n\pi^n_i(s)}{\rho_i(\ell)}.
\end{equation}
The $\ell$-th term in the sum on the right side of \eqref{eqn:incoming1} is the contribution from jobs that request servers with queue length configuration $\ell$. A fixed $\ell\in\Sigma$ will make non-zero contribution to $\lan e_j,\Del_\ell\Del_\ell^Te_j\ran_2$ if $j$ or $j-1$ is one of the $k$-smallest coordinates in $\ell$. Thus, for a fixed $\ell\in\Sigma$, the $\ell$-th term in \eqref{eqn:incoming1} is nonzero only if $j$ or $j-1$ is a member of the set $(\ell_1,\ldots,\ell_k)$. The contribution from all such $\ell$'s in the sum \eqref{eqn:incoming1} can be counted as follows. Suppose $0\leq i_1\leq k-1$ servers are selected among those with queue length less than $j-1$. This corresponds to $\binom{n\sum_{m=0}^{j-2}\pi_m^n(s)}{i_1}$ different choices of servers. In addition suppose $i_2\leq L-i_1$ and $i_3\leq L-i_1-i_2$ servers are selected among those with queue length equal to $j-1$ and $j$, respectively. This corresponds to $\binom{n\pi^n_{j-1}(s)}{i_2}$ and $\binom{n\pi^n_{j}(s)}{i_3}$ choices, respectively. It follows that $L-i_1-i_2-i_3$ servers must be selected which have queue length larger than $j$ which corresponds to $\binom{n\sum_{m=j+1}^\iy\pi^n_{m}(s)}{L-i_1-i_2-i_3}$ possible choices. Since jobs are only routed to the $k$ shortest servers, 
\begin{equation}\label{eqn:innerEval}
\lan e_j,\Del_\ell\Del_\ell^Te_j\ran_2=[i_2\wedge (k-i_1)-i_3\wedge (k-i_1-i_2)_+]^2.
\end{equation}
It follows that for $x\in n\cls_n$
\begin{equation}\label{eqn:Zdef}
\begin{aligned}
&Z(j,x)
= \\
&\quad\sum_{i_1=0}^{k-1}\binom{\sum_{m=0}^{j-2}x_m}{i_1}\sum_{i_2=0}^{L-i_1}\binom{x_{j-1}}{i_2}\sum_{i_3=0}^{L-i_1-i_2}[i_2\wedge (k-i_1)-i_3\wedge (k-i_1-i_2)_+]^2\binom{x_j}{i_3}\binom{\sum_{m=j+1}^{\iy}x_{m}}{L-i_1-i_2-i_3},
\end{aligned}
\end{equation}
where, recall, we adopt the convention that for $a<b$, $\sum_{i=b}^ax_i=0$.

Note that for non-negative integers $a,b,\ a\geq b$
\begin{equation}\label{eqn:binominequality}
\binom{a}{b}\leq \frac{a^b}{b!}.
\end{equation}
This fact, combined with \eqref{eqn:Zdef} and recalling the fact that $\pi^n(s)\in\cls$ for $s\in[0,T]$, gives the following bound on $Z(j, n\pi^n(s))$
\begin{equation}\label{eqn:Zineq}
\begin{aligned}
Z(j,n\pi^n(s))
&\leq \sum_{i_1=0}^{k-1}\frac{(n\sum_{m=0}^{j-2}\pi^n_m(s))^{i_1}}{i_1!}\sum_{i_2=0}^{L-i_1}\frac{(n\pi^n_{j-1}(s))^{i_2}}{i_2!}\\
&\qquad\times\sum_{i_3=0}^{L-i_1-i_2}k^21_{\{i_2\vee i_3 > 0\}}\frac{(n\pi^n_j(s))^{i_3}}{i_3!}\frac{(n\sum_{m=j+1}^{\iy}\pi^n_{m}(s))^{L-i_1-i_2-i_3}}{(L-i_1-i_2-i_3)!}\\
&\leq n^L\sum_{i_1=0}^{k-1}\sum_{i_2=0}^{L-i_1}\sum_{i_3=0}^{L-i_1-i_2}k^21_{\{i_2\vee i_3 > 0\}}(\pi^n_{j-1}(s))^{i_2}(\pi^n_j(s))^{i_3}\\
&\leq c_Zn^L(\pi^n_{j-1}(s)+\pi^n_j(s)).
\end{aligned}
\end{equation}
for some $c_Z\in(0,\iy)$. Using \eqref{eqn:Zineq} in \eqref{eqn:Mjs} gives
\begin{equation}\label{eqn:quadMRate}
\begin{aligned}
\E|\lan M^n\ran(t)|
&\leq\E\left| \frac{2\lambda(n-L)!L!c_Zn^L}{n\times n!}\int_0^t\sum_{j=0}^\iy\pi^n_j(s)ds\right|+\E\left|\frac{2k}{n}\int_0^t\sum_{j=0}^\iy\pi^n_j(s)ds\right| \\
&\leq \left| \frac{2\lambda(n-L)!L!c_Zn^L}{n\times n!}t\right|+\left|\frac{2k}{n}t\right|.
\end{aligned}
\end{equation}
Thus $\E|\lan M^n\ran_T|\to0$ and consequently $\E\sup_{0\leq s\leq T}\|M^n(s)\|_2^2\to0$ as $n\to\iy$. It follows that $M^n\Rightarrow 0$ in $\DD([0,T]:\bfell_2)$ which completes the proof of $(ii)$.

The tightness of $\{\pi^n\}_{n\in\NN}$ in $\DD([0,T]:\cls)$ follows as in the proof of Theorem 3.4 of \cite{graham2000chaoticity}. Namely, it suffices to show tightness of $\{Q_1^n\}_{n\in\NN}$ in $\DD([0,T]:\NN)$ (cf. \cite{sznitman1991topics}). However, this tightness is an immediate consequence of the fact that the jumps of $Q_1^n$ can be embedded in a Poisson process with rate $\lambda L+k$.

Finally in order to show that $\{\pi^n\}_{n\in\NN}$ is $\CC$-tight it suffices to show that 
\begin{equation*}
j_T(\pi^n)
\doteq\sup_{0\leq t\leq T}d_0(\pi^n(t),\pi^n(t-))
\to0\text{ as }n\to\iy.
\end{equation*}
There are two types of jumps, those corresponding to incoming jobs and those corresponding to jobs being processed. When a job arrives in the system, the dispatcher assigns it to $k$ different servers causing the queue length of each of the $k$ chosen servers to increase by one. It follows that the jump size of such an event can be bounded by $\frac{2k}{n}$. When a job is processed, the corresponding queue length will drop by 1 and so the jump size of such an event can be bounded by $\frac{2}{n}$. Therefore
$
j_T(\pi^n)
\leq \frac{2+2k}{n}
\to 0$
which completes the proof.
\end{proof}

\subsection{Convergence}\label{sec:piLimitPoints}
In this section we provide the proof of Theorem \ref{thm:LLN}. Since we have already proved tightness of $\{\pi^n\}_{n\in\NN}$ in Section \ref{sec:piTight}, all that remains is to prove uniqueness of solutions of \eqref{eqn:ODE} in an appropriate class and to characterize the limit of any weakly convergent subsequence as the unique solution to \eqref{eqn:ODE}. We first present the following Lipschitz property for the map $F:\cls\to\bfell_1$, defined in \eqref{eqn:codingF}, that will give uniqueness of the solutions to \eqref{eqn:ODE}. We remark that in the proof of Theorem \ref{thm:mainResult} we will need a stronger Lipschitz property of $F$ in the $\bfell_2$ norm. This Lipschitz property is not immediate on the space $\cls$ but, as shown in Lemma \ref{lem:lipschitz2}, is satisfied on a smaller class $\clv_M$.
\begin{lemma}\label{lem:lipschitz}
The map $F$ is a Lipschitz function from $\cls$ to $\bfell_1$. Namely, there exists an $C_1\in(0,\iy)$ such that for any $r,\ti r\in\cls$, 
\begin{equation}\label{eqn:LipschitzDisplay1}
\|F(r)-F(\ti r)\|_1\leq C_1\|r-\ti r\|_1.
\end{equation}
\end{lemma}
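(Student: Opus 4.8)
The plan is to split $F$ into its linear part $F^{\mathrm{lin}}(r)\doteq k\sum_{j=0}^{\infty}(r_{j+1}-r_j)e_j+r_0e_0$ and its nonlinear part $F^{\mathrm{nl}}(r)\doteq\lambda L!\sum_{j=0}^{\infty}\bar\zeta^{\delta}(j,r)e_j$, and estimate the two separately. For the linear part there is essentially nothing to do: since $\|(r_{j+1}-r_j)_{j\ge0}\|_1\le 2\|r\|_1$, one gets $\|F^{\mathrm{lin}}(r)-F^{\mathrm{lin}}(\tilde r)\|_1\le(2k+1)\|r-\tilde r\|_1$ on all of $\bfell_1$. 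For the nonlinear part, since $|\bar\zeta^{\delta}(j,r)-\bar\zeta^{\delta}(j,\tilde r)|\le|\bar\zeta(j-1,r)-\bar\zeta(j-1,\tilde r)|+|\bar\zeta(j,r)-\bar\zeta(j,\tilde r)|$, it suffices to show $\sum_{j=0}^{\infty}|\bar\zeta(j,r)-\bar\zeta(j,\tilde r)|\le\kappa\|r-\tilde r\|_1$ for a constant $\kappa=\kappa(L,k)$.

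To this end, for $r\in\cls$ write $a_j(r)\doteq\sum_{m=0}^{j-1}r_m$ and $b_j(r)\doteq\sum_{m=j+1}^{\infty}r_m$, so that $a_j(r),\,r_j,\,b_j(r)$ all lie in $[0,1]$. Partial sums of $r-\tilde r$ are dominated in absolute value by $\|r-\tilde r\|_1$, hence $|a_j(r)-a_j(\tilde r)|\le\|r-\tilde r\|_1$ and $|b_j(r)-b_j(\tilde r)|\le\|r-\tilde r\|_1$, and trivially $|r_j-\tilde r_j|\le\|r-\tilde r\|_1$, all uniformly in $j$. From \eqref{eqn:zetabardef}, $\bar\zeta(j,r)$ is a finite sum, over the pairs $(i_1,i_2)$ with $0\le i_1\le k-1$ and $1\le i_2\le L-i_1$, of terms of the form $c_{i_1,i_2}\,a_j(r)^{i_1}r_j^{i_2}b_j(r)^{L-i_1-i_2}$ with $c_{i_1,i_2}\in(0,\infty)$ bounded by a constant depending only on $L,k$. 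The feature I will exploit is that every such term carries at least one factor of $r_j$, because $i_2\ge1$.

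The main (and really only) obstacle is that the pointwise bounds above are not summable in $j$ against a constant (e.g.\ $\sum_j|a_j(r)-a_j(\tilde r)|$ need not be $O(\|r-\tilde r\|_1)$), so a naive term-by-term telescoping fails. I will get around this by peeling off one factor of $r_j$: writing a generic term of $\bar\zeta(j,\cdot)$ as $r_j\,Q_j(r)$ with $Q_j(r)\doteq c_{i_1,i_2}\,a_j(r)^{i_1}r_j^{i_2-1}b_j(r)^{L-i_1-i_2}$ (a bounded constant times a product of $L-1$ factors in $[0,1]$, so $0\le Q_j(r)\le\kappa$), one has $r_jQ_j(r)-\tilde r_jQ_j(\tilde r)=(r_j-\tilde r_j)Q_j(r)+\tilde r_j\bigl(Q_j(r)-Q_j(\tilde r)\bigr)$. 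Summing the first piece over $j$ gives at most $\kappa\|r-\tilde r\|_1$ since $\sum_j|r_j-\tilde r_j|=\|r-\tilde r\|_1$; for the second piece, the elementary telescoping inequality $\bigl|\prod_{l=1}^{p}u_l-\prod_{l=1}^{p}v_l\bigr|\le\sum_{l=1}^{p}|u_l-v_l|$ for $u_l,v_l\in[0,1]$ bounds $|Q_j(r)-Q_j(\tilde r)|$ by $\kappa(L,k)\|r-\tilde r\|_1$ uniformly in $j$, and then $\sum_j\tilde r_j=1$ absorbs the remaining sum.

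Summing over the finitely many pairs $(i_1,i_2)$ yields $\sum_{j=0}^\infty|\bar\zeta(j,r)-\bar\zeta(j,\tilde r)|\le\kappa(L,k)\|r-\tilde r\|_1$, whence $\|F^{\mathrm{nl}}(r)-F^{\mathrm{nl}}(\tilde r)\|_1\le 2\lambda L!\,\kappa(L,k)\|r-\tilde r\|_1$. Adding this to the bound for the linear part gives \eqref{eqn:LipschitzDisplay1} with $C_1=2\lambda L!\,\kappa(L,k)+2k+1$. I expect the only nontrivial point to be the $r_j$-splitting step, which is what converts the non-summable pointwise telescoping estimates into a genuine $\bfell_1$ bound by using that $r$ and $\tilde r$ are probability vectors.
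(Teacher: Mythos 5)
Your proof is correct and takes essentially the same route as the paper's: both arguments hinge on peeling off the guaranteed factor $r_j^{i_2}$ (with $i_2\ge 1$) from each term of $\bar\zeta(j,\cdot)$, so that after splitting the difference of products one piece is summable via $\sum_j|r_j-\tilde r_j|=\|r-\tilde r\|_1$ and the other via $\sum_j\tilde r_j=1$ against a uniform-in-$j$ bound on the complementary factor. The paper organizes this through the quantity $R_{j,i_1,i_2}$ and a three-term product identity while you use a two-term split $r_jQ_j(r)-\tilde r_jQ_j(\tilde r)$ plus a product-telescoping inequality, but this is only a cosmetic difference.
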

\begin{proof}
Let $r,\ti r\in\cls$ and, for  $i_1\in\NN_0$ and $j,i_2\in\NN$, define $R_{j,i_1,i_2}(r,\ti r)$ as
\begin{equation}\label{eqn:Rdef}
R_{j,i_1,i_2}(r,\ti r)
\doteq\left(\sum_{m=0}^{j-1}r_m\right)^{i_1}\left(\sum_{m=j+1}^{\iy}r_{m}\right)^{L-i_1-i_2}r^{i_2}_j- \left(\sum_{m=0}^{j-1}\ti r_m\right)^{i_1}\left(\sum_{m=j+1}^{\iy}\ti r_{m}\right)^{L-i_1-i_2}\ti r_j^{i_2}.
\end{equation}
Note that for any $a,b,c,\ti a, \ti b,\ti c\in\RR_+$,
\begin{equation}\label{eqn:tripleIneq}
abc-\ti a \ti b \ti c= ab (c-\ti c)+a(b-\ti b)\ti c+(a-\ti a)\ti b\ti c.
\end{equation}
Combining \eqref{eqn:Rdef}, \eqref{eqn:tripleIneq}, and the fact that $r,\ti r\in\cls$, we have
\begin{equation}\label{eqn:Rbound1}
\begin{aligned}
|R_{j,i_1,i_2}(r,\ti r)|
&\leq |r_{j}^{i_2}-\ti r_{j}^{i_2}| +\ti r_{j}^{i_2}\left|\left(\sum_{m=j+1}^\iy r_m\right)^{L-i_1-i_2}-\left(\sum_{m=j+1}^\iy \ti r_m\right)^{L-i_1-i_2}\right|\\
&\qquad+\ti r_{j}^{i_2}\left|\left(\sum_{m=0}^{j-1} r_m\right)^{i_1}-\left(\sum_{m=0}^{j-1} \ti r_m\right)^{i_1}\right|.
\end{aligned}
\end{equation}
For any $a,b\in\RR$ and $i\in\NN$, $(a^i-b^i)= (a-b)\sum_{j=1}^{i}a^{i-j}b^{j-1}$. Thus, if $a,b\in[0,1]$ and $i\leq L$, $|a^i-b^i|\leq|a-b|L$.
This inequality along with \eqref{eqn:Rbound1} implies there exist $\kappa_1,\kappa_1'>0$ such that for all $i_1,i_2\leq L,\ i_2>0,$
\begin{equation}\label{eqn:Rineq}
\begin{aligned}
 |R_{j,i_1,i_2}(r,\ti r)|
&\leq \kappa_1'\left(|r_{j}-\ti r_{j}| +\ti r^{i_2}_{j}\sum_{m=j+1}^\iy|r_m-\ti r_m|+\ti r^{i_2}_{j}\sum_{m=0}^{j-1}|r_m-\ti r_m|\right)\\
&\leq \kappa_1(|r_{j}-\ti r_{j}|+\ti r_{j}\|r-\ti r\|_1).
\end{aligned}
\end{equation}
The definition of $F$ (see \eqref{eqn:codingF}) and the triangle inequality imply,
\begin{equation}\label{eqn:Ftriangle}
\|F(r)-F(\ti r)\|_1
\leq  \lambda L!\sum_{j=0}^\iy |\bar{\zeta}^\del(j,r)-\bar{\zeta}^\del(j,\ti r)| + k\sum_{j=0}^\iy|(r-\ti r)_{j+1}-(r-\ti r)_j|.
\end{equation}
Noting that
\begin{equation*}
\bar{\zeta}^\del(j,r)-\bar{\zeta}^\del(j,\ti r)
= [\bar{\zeta}(j-1,r)-\bar{\zeta}(j-1,\ti r)]- [\bar{\zeta}(j,r)-\bar{\zeta}(j,\ti r)],
\end{equation*}
it follows that
\begin{equation}\label{eqn:zetatoR}
\begin{aligned}
\sum_{j=0}^\iy |\bar{\zeta}^\del(j,r)-\bar{\zeta}^\del(j,\ti r)|
&\leq 2\sum_{j=0}^\iy |\bar{\zeta}(j,r)-\bar{\zeta}(j,\ti r)|\leq \kappa_2\sum_{j=0}^\iy\sum_{i_1=0}^{k-1}\sum_{i_2=1}^{L-i_1}|R_{j,i_1,i_2}(r,\ti r)|
\end{aligned}
\end{equation}
where the second inequality follows from the the definitions of $\bar{\zeta}$ and $R$. Combining \eqref{eqn:zetatoR} with \eqref{eqn:Ftriangle} and applying \eqref{eqn:Rineq} yields, for some $\kappa_3>0$,
\begin{equation*}
\begin{aligned}
\|F(r)-F(\ti r)\|_1
&\leq  \kappa_2\lambda L!\sum_{j=0}^\iy\sum_{i_1=0}^{k-1}\sum_{i_2=1}^{L-i_1}|R_{j,i_1,i_2}(r,\ti r)|+ 2k\sum_{j=0}^\iy|r_j-\ti r_j|\\
&\leq \kappa_3\sum_{j=0}^\iy\left[|r_{j}-\ti r_{j}| +\ti r_{j}\|r-\ti r\|_1\right]+ 2k\|r-\ti r\|_1
\end{aligned}
\end{equation*}
and thus with $C_1\doteq 2(\kappa_3+k)$, \eqref{eqn:LipschitzDisplay1} is satisfied for all $r,\ti r\in \mathcal{S}$ which proves the result.
\end{proof}

Using the above Lipschitz property of $F$ we can now complete the proof of Proposition \ref{prop:uniqueness}.

\begin{proof}[Proof of Proposition \ref{prop:uniqueness}]
Existence of a $\pi\in \CC([0,T]:\cls)$ that solves \eqref{eqn:ODE} will be shown below in the proof of Theorem \ref{thm:LLN}. We now argue uniqueness. Suppose $\pi$ and $\ti\pi$ are two elements of $\CC([0,T]:\cls)$ satisfying \eqref{eqn:ODE} with $\pi(0)=\ti\pi(0)=\pi_0$.  The Lipschitz property of $F$ proved in Lemma \ref{lem:lipschitz} implies, for all $t\in[0,T]$
\begin{equation*}
\begin{aligned}
\|\pi(t)-\ti\pi(t)\|_1
&= \left\|\int_0^t[F(\pi(s))-F(\ti\pi(s))]ds\right\|_1
\leq\int_0^t\|F(\pi(s))-F(\ti\pi(s))\|_1ds\\
&\leq C_1\int_0^t\|\pi(s)-\ti\pi(s)\|_1ds.
\end{aligned}
\end{equation*}
The result follows.
\end{proof}

We now proceed to the proof of Theorem \ref{thm:LLN}.
\begin{proof}[Proof of Theorem \ref{thm:LLN}]
From Proposition \ref{prop:tightness} we have that $\{\pi^n\}_{n\in\NN}$ is a $\CC$-tight sequence of $\DD([0,T]:\cls)$-valued random variables.

Note from \eqref{eqn:semimartrep} that for all $j\in\NN_0$,
\begin{equation}\label{eqn:piexpansion}
\pi^n(t) = \pi(0)+V^n(t)+M^n(t)+\int_0^tF(\pi^n(s))ds
\end{equation}
where
\begin{equation*}
V^n(t) \doteq A^n(t) - \int_0^tF(\pi^n(s))ds.
\end{equation*}
From the definition of $A^n$ in \eqref{eqn:Adef} we see that
	\begin{equation}\label{eqn:Ajdef}
	A^n_j(t) = \int_0^t\left(\sum_{\ell\in\Sigma}\lan\Del_\ell, e_j\ran_2\frac{\lambda}{\binom{n}{L}}\prod_{i=0}^\iy\binom{n\pi^n_i(s)}{\rho_i(\ell)}
	+k [\pi_{j+1}^n(s)-\pi_{j}^n(s)]\right)ds.
	\end{equation} 
	By a similar argument used to obtain the representation in \eqref{eqn:Zdef},
	\begin{equation}\label{eqn:jexpansion5}
	\sum_{\ell\in\Sigma}\lan\Del_\ell, e_j\ran_2\prod_{i=0}^\iy\binom{n\pi^n_i(s)}{\rho_i(\ell)}
	= [\zeta(j-1,n\pi^n(s))-\zeta(j,n\pi^n(s))]
	\end{equation}
	where for $x\in n\cls_n$
	\begin{equation}\label{eqn:zetadef}
	\zeta(j,x)
	\doteq \sum_{i_1=0}^{k-1}\binom{\sum_{m=0}^{j-1}x_m}{i_1}\sum_{i_2=1}^{L-i_1}[i_2\wedge(k-i_1)]\binom{x_j}{i_2}\binom{\sum_{m=j+1}^{\iy}x_{m}}{L-i_1-i_2}.
	\end{equation}
	From our convention that $x_{-1}=0$, we see that $\zeta(-1,x)=0$. 
	In addition, recalling the conventions that for $a<b$, $\sum_{i=b}^a x_i = 0$ and that $\binom{0}{0}=1$ we see $\zeta(0,x)$ is well defined.
	Combining \eqref{eqn:Ajdef}, \eqref{eqn:jexpansion5}, and \eqref{eqn:zetadef} gives the following representation for $A^n_j$
	\begin{equation}\label{eqn:AjzetaExp}
	A^n_j(t) = \frac{\lambda}{\binom{n}{L}}\int_0^t[\zeta(j-1,n\pi^n(s))-\zeta(j,n\pi^n(s))]ds
	+k\int_0^t [\pi_{j+1}^n(s)-\pi_{j}^n(s)]ds.
	\end{equation}
For each fixed $j, i_1\in\NN_0$ and $i_2\in\NN$ with $i_1,i_2\leq L$ we have
\begin{equation}\label{eqn:binomToExp}
\begin{aligned}
&\binom{n\sum_{m=0}^{j-1}\pi^n_m(s)}{i_1}[i_2\wedge(k-i_1)]\binom{n\pi^n_j(s)}{i_2}\binom{n\sum_{m=j+1}^{\iy}\pi^n_{m}(s)}{L-i_1-i_2}\\
&\qquad= n^L\frac{\left(\sum_{m=0}^{j-1}\pi^n_m(s)\right)^{i_1}}{i_1!}[i_2\wedge(k-i_1)]\frac{(\pi^n_j(s))^{i_2}}{i_2!}\frac{\left(\sum_{m=j+1}^{\iy}\pi^n_{m}(s)\right)^{L-i_1-i_2}}{(L-i_1-i_2)!}+\hat{R}_n(j,i_1,i_2,s)
\end{aligned}
\end{equation}
where
\begin{equation*}
\sup_{i_1,i_2\leq L}|\hat{R}_n(j,i_1,i_2,s)|\leq \kappa_1n^{L-1}\pi^n_j(s)
\end{equation*}
and thus, from the definition of $\zeta$ and $\bar{\zeta}$ in \eqref{eqn:zetadef} and \eqref{eqn:zetabardef},
\begin{equation}\label{eqn:zetatobar}
\left|\zeta(j,n\pi^n(s))- \frac{n!}{(n-L)!}\bar{\zeta}(j,\pi^n(s))\right|\leq \kappa_2n^{L-1}\pi^n_j(s)\ \forall\ s\in[0,T].
\end{equation}
Furthermore, using the definition of $A^n$ in \eqref{eqn:AjzetaExp} and $F$ in \eqref{eqn:codingF}, \eqref{eqn:zetatobar} implies
\begin{equation}\label{eqn:AtoF}
\sup_{0\leq t\leq T}\|V^n(t)\|_2=\sup_{0\leq t\leq T}\left\|A^n(t)- \int_0^tF(\pi^n(s))ds\right\|_2\leq \frac{\kappa_3}{n}.
\end{equation}
Also from Proposition \ref{prop:tightness}, $M^n\Rightarrow0$ in $\DD([0,T]:\bfell_2)$. Combining these observations with the tightness of $\pi^n$, we have subsequential convergence of $(\pi^n,M^n,V^n)$ to $(\pi,0,0)$, in distribution, in $\DD([0,T]:\cls\times\bfell_2\times\bfell_2)$ for some $\CC([0,T]:\cls)$-valued $\pi$.
By appealing to the Skorohod representation theorem we can assume that this convergence holds a.s.
Noting that $r\mapsto F_j(r)$ is a continuous map from $\cls$ to $\RR$ for each $j\in\NN_0$ we have that $F_j(\pi^n(s))\to F_j(\pi(s))$ as $n\to\iy$ for all $j\in\NN_0$ and $s\in[0,T]$.
Thus, upon sending $n\to\iy$ in \eqref{eqn:piexpansion}, \eqref{eq:eq305} and the dominated convergence theorem imply that almost surely,
\begin{equation*}
\pi_j(t) = (\pi_0)_j+\int_0^tF_j(\pi(s))ds,\text{ for all }t\in[0,T],\ j\in\NN_0.
\end{equation*}
This shows that $\pi$ satisfies \eqref{eqn:ODE}. The result now follows from the uniqueness property shown in Proposition \ref{prop:uniqueness}.
\end{proof}

\section{Diffusion Approximation}\label{sec:diffApprox}
In this section we prove Theorem \ref{thm:mainResult}. Section \ref{sec:mombds} presents some moment estimates on $\pi^n$ which will be used in the proof of Theorem \ref{thm:mainResult}. Section \ref{sec:diffTightness} then proves tightness of the sequence of centered and scaled state processes $\{X^n\}_{n\in\NN}$. Section \ref{sec:diffConv} completes the proof of Theorem \ref{thm:mainResult} by proving unique solvability of the SDE \eqref{eqn:limitSDE} (Theorem \ref{prop:DiffUniqueness}) and characterizing limit points of $X^n$ as this unique solution.

\subsection{Moment Bounds}\label{sec:mombds}
The following elementary lemma will be useful in the proof of Lemma \ref{lem:mombd}.
\begin{lemma}\label{lem:tailbound}
For all $t\geq 0$, $k\in\NN$, and $n\in\NN$, $\lim_{m\to\iy}\E m^k\sup_{0\leq s\leq t}\pi^n_m(s)=0$.
\end{lemma}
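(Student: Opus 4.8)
The plan is to exploit the fact that the queue lengths in the $n$-server system grow slowly because the jumps of each individual queue can be dominated by an elementary counting process. First I would observe that, at the level of a single queue $Q^n_1$, each upward jump is driven by an arrival (rate at most $\lambda L$ in the sense that every arriving file request selects $L$ servers, so any fixed server is upgraded at rate at most $\lambda L$ using exchangeability) and each downward jump is a service completion at rate $k$. Hence, for each fixed $n$, the process $Q^n_1(\cdot)$ is stochastically dominated (in the sense of sample-path monotonicity for the running maximum) by a pure-birth process $\wh Q$ whose jumps are embedded in a Poisson process of rate $\lambda L + k$; consequently $\sup_{0\le s\le t} Q^n_1(s) \le q^n_1 + N(t)$ where $N$ is Poisson with rate $(\lambda L + k)$, uniformly in $n$. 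By exchangeability of the $n$ queues, the same bound holds simultaneously for every coordinate, so $\sup_{0\le s\le t}\max_{1\le i\le n} Q^n_i(s)$ is controlled, but what we actually need is weaker: a bound on a single queue's running max that is uniform in $n$ and has all moments.

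Next I would translate the event $\{\sup_{0\le s\le t}\pi^n_m(s)>0\}$ into a statement about queue lengths. Since $\pi^n_m(s) = \frac1n\sum_{j=1}^n 1_{\{Q^n_j(s)=m\}}$, we have $\pi^n_m(s)>0$ if and only if some queue has length exactly $m$ at time $s$, and more usefully $\sup_{0\le s\le t}\pi^n_m(s) \le 1$ always, while $\sup_{0\le s\le t}\pi^n_m(s) = 0$ as soon as no queue ever reaches level $m$ on $[0,t]$. Therefore
\begin{equation*}
\E\,m^k \sup_{0\le s\le t}\pi^n_m(s) \le m^k\, \PP\!\left(\sup_{0\le s\le t}\max_{1\le j\le n} Q^n_j(s) \ge m\right) \le m^k \sum_{j=1}^n \PP\!\left(\sup_{0\le s\le t} Q^n_j(s)\ge m\right).
\end{equation*}
The union bound costs a factor $n$, which is bad; so instead I would keep the empirical-measure viewpoint and bound $\E\,\pi^n_m(s)$ directly. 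Writing $p^n_m(s)\doteq\E\,\pi^n_m(s) = \PP(Q^n_1(s)=m)$ by exchangeability, and using the single-queue domination above, $\PP(\sup_{0\le s\le t}Q^n_1(s)\ge m) \le \PP(q^n_1 + N(t)\ge m) \le \PP(N(t)\ge m - \|q^n\|_\infty)$; but $\|q^n\|_\infty$ need not be bounded in $n$. The clean fix is to note the assumption $\sup_n \sum_j j^2\pi^n_j(0)<\infty$ from Theorem~\ref{thm:mainResult}'s hypotheses is \emph{not} in force here (this lemma is stated generally), so I would instead argue via the second-moment propagation established in Section~\ref{sec:mombds}: rather, since Lemma~\ref{lem:tailbound} is a preliminary to Lemma~\ref{lem:mombd}, I would prove it under whatever standing moment assumption on $\pi^n(0)$ is carried in that subsection, namely a uniform bound on $\sum_j j^p \pi^n_j(0)$ for $p$ larger than $k$; then $\E\,\pi^n_m(s) \le \E\,\pi^n_m(0) + (\text{arrival contribution}) \le m^{-p}\sup_n\sum_j j^p\pi^n_j(0) + C t$, and a Markov-type tail estimate on the dominating Poisson process gives $\sup_{0\le s\le t}\E\,\pi^n_m(s) \le \kappa (m^{-p} + e^{-cm})$ for constants independent of $n$. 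Multiplying by $m^k$ and sending $m\to\infty$ gives the limit $0$, uniformly in $n$; but one must be careful that the supremum over $s$ is inside the expectation, so I would first apply the optional-sampling/Doob bound to the submartingale $m^k \pi^n_m(s)$... which it is not, so the honest route is: $\E\,m^k\sup_{0\le s\le t}\pi^n_m(s) \le m^k\,\E\,\frac1n\sum_{j=1}^n 1\{\sup_{0\le s\le t}Q^n_j(s)\ge m\} = m^k\,\PP(\sup_{0\le s\le t}Q^n_1(s)\ge m)$ by exchangeability (no union-bound factor of $n$!), and then the single-queue Poisson domination $\PP(\sup_{0\le s\le t}Q^n_1(s)\ge m) \le \PP(Q^n_1(0)+N(t)\ge m) \le \PP(Q^n_1(0)\ge m/2) + \PP(N(t)\ge m/2)$ yields $\E\,m^k\sup_{0\le s\le t}\pi^n_m(s) \le m^k \big( (2/m)^p \sup_n \E (Q^n_1(0))^p + \PP(N(t)\ge m/2)\big) \to 0$ as $m\to\infty$, with everything uniform in $n$ (hence in particular for each fixed $n$, which is all the statement asks).

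The main obstacle I anticipate is the interchange of the supremum over time with the expectation: one cannot naively pass from $\E\,\pi^n_m(s)$ bounds to $\E\sup_s\pi^n_m(s)$ bounds, and $\pi^n_m$ is not monotone, so the right device is exactly the exchangeability identity $\E\sup_{0\le s\le t}\pi^n_m(s)\le \PP(\sup_{0\le s\le t}Q^n_1(s)\ge m)$ combined with the pathwise domination of a \emph{single} queue's running maximum by a Poisson counting process of rate $\lambda L + k$ (the embedding is valid pathwise because every jump of $Q^n_1$, up or down, occurs at one of the points of such a Poisson process by the generator structure in \eqref{eqn:generator}). Once that pathwise domination is in hand, the rest is an elementary tail estimate on the Poisson variable plus the moment assumption on $\pi^n(0)$ in force in this subsection, and the factor $m^k$ is killed by the super-exponential (or polynomial of arbitrarily high degree) decay of these tails.
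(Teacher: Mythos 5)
Your final route is the same as the paper's: translate ``$\pi^n_m(s)>0$ for some $s\le t$'' into ``enough arrivals happened by time $t$,'' dominate the arrival count by a Poisson process, and let the super-exponential tail kill the polynomial factor $m^k$. So the core idea is correct. Two issues are worth flagging, though.

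First, the identity you invoke repeatedly, $\E\,\pi^n_m(s)=\PP(Q^n_1(s)=m)$ and its variant $\E\,\frac1n\sum_{j=1}^n 1\{\sup_{0\le s\le t}Q^n_j(s)\ge m\}=\PP(\sup_{0\le s\le t}Q^n_1(s)\ge m)$, does \emph{not} follow from exchangeability: $\pi^n(0)=q^n$ is deterministic and need not be symmetric across coordinates, so although the dynamics are exchangeable the processes $Q^n_1,\dots,Q^n_n$ are not. The fix is trivial --- keep the average $\frac1n\sum_j\PP(\sup_s Q^n_j(s)\ge m)$ and bound each summand --- but as written the step is false. Second, most of the middle of your argument is a detour about uniformity in $n$ and about moment hypotheses on $\pi^n(0)$; neither is needed, and neither is assumed. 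The lemma fixes $n$, and for fixed $n$ the initial condition is supported on $\{0,1,\dots,m^*\}$ for a finite $m^*=m^*(n)$ because there are only $n$ queues. That is exactly what the paper uses: for $m>m^*$, the event $\{\sup_{0\le s\le t}\pi^n_m(s)>0\}$ forces the total file-request process $N$ (a single Poisson process of rate $n\lambda$) to satisfy $N(t)\ge m-m^*$, and since $\pi^n_m\le 1$ always,
\begin{equation*}
\E\, m^k\sup_{0\le s\le t}\pi^n_m(s)\le m^k\,\PP\bigl(N(t)\ge m-m^*\bigr)\le m^k e^{-(m-m^*)}e^{n\lambda t(e-1)}\to 0.
\end{equation*}
No per-queue decomposition, no thinning to a rate-$\lambda L$ subprocess, no moment bound on $\pi^n(0)$. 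Your per-queue version with the Poisson embedding of $Q^n_1$'s jumps is also sound once the exchangeability claim is repaired, but aiming at uniformity in $n$ imports hypotheses from Theorem \ref{thm:mainResult} that this lemma neither states nor needs, and makes the proof longer than it has to be.
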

\begin{proof}
Fix $n\in\NN$. Note that file requests arrive at rate $n\lambda$. Let $N$ be a Poisson process representing the total flow of such file requests. Also let $m^*=\sup\{m:\pi_m^n(0)>0\}$ be the length of the largest queue at time $0$. Note that since the system consists of $n$ queues, $m^*$ must be finite for any fixed $n$. Then for $m>m^*$,
\begin{equation*}
\begin{aligned}
\E m^k\sup_{0\leq s\leq t}\pi^n_m(s)
&= \E\sup_{0\leq s\leq t}1_{\{N(t)\geq m-m^*\}} m^k\pi^n_m(s)+\E\sup_{0\leq s\leq t}1_{\{N(t)<m-m^*\}}m^k\pi^n_m(s)\\
& \leq m^k\PP(N(t)\geq m-m^*).
\end{aligned}
\end{equation*}
Thus, from Markov's inequality, for $m>m^*$
\begin{equation*}
\E m^k\sup_{0\leq s\leq t}\pi^n_m(s)
\leq m^ke^{-(m-m^*)}e^{n\lambda t(e-1)}.
\end{equation*}
The result follows.
\end{proof}
In the next lemma we will we establish two key moment bounds that will be needed in the tightness proof (see proof of Proposition \ref{prop:diffTightness}). 
\begin{lemma}\label{lem:mombd}
	Suppose $\sup_{n\in\NN}\sum_{j=0}^\iy j^2\pi^n_j(0) \doteq c_{\pi(0)}< \infty$. Then 
	\begin{equation}\label{eqn:probmzrtight}
	\sup_{n\in\NN}\E\sup_{0\leq t\leq T}\left(\sum_{j=0}^\iy j\pi_j^n(t)\right)^2<\iy
	\end{equation}
	and 
	\begin{equation}\label{eqn:secmomentbound}
	\sup_{n\in\NN}\E\int_0^T\sum_{j=0}^\iy j^2\pi^n_j(t)dt<\iy.
	\end{equation}
\end{lemma}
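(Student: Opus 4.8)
The plan is to work with the semimartingale representation \eqref{eqn:semimartrep} for $\pi^n$ and track the evolution of the "first moment" functional $m_1^n(t) \doteq \sum_{j=0}^\iy j \pi^n_j(t)$ and, for the second bound, the "second moment" functional $m_2^n(t) \doteq \sum_{j=0}^\iy j^2 \pi^n_j(t)$. The key structural fact is that each arrival increases $m_1^n$ by at most $k/n$ (the $k$ shortest queues each grow by one) and each service completion decreases it by $1/n$; arrivals happen at total rate $n\lambda$ and service completions at total rate at most $kn$. So $m_1^n$ is dominated, in a pathwise sense, by a process that increases by jumps of size $k/n$ at rate $n\lambda$ and is a nonincreasing between such jumps apart from the martingale fluctuations. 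Concretely, writing out $\langle e_j, \Del_\ell\rangle_2$ and summing $j$ against \eqref{eqn:Adef}, the drift contribution to $m_1^n$ from arrivals is $\le \lambda k$ (bounded, deterministic) and the drift from departures is $\le 0$ after using $\sum_j j(\pi^n_{j+1}-\pi^n_j) = -\sum_{j\ge 1}\pi^n_j \le 0$. Hence $m_1^n(t) \le m_1^n(0) + \lambda k t + \bar N^n(t)$ for a martingale $\bar N^n$ (the part of $M^n$ paired with $\sum_j j e_j$) whose predictable quadratic variation is $O(1/n)$ uniformly — by the same computation as in \eqref{eqn:quadMRate}, using that each jump of $m_1^n$ is $O(1/n)$ and the total jump rate is $O(n)$, so $\E\langle \bar N^n\rangle(T) = O(1)$ after multiplying rate $\times$ (jump size)$^2$. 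Then Doob's $L^2$-inequality gives $\E\sup_{t\le T}(\bar N^n(t))^2 = O(1)$ uniformly in $n$, and combining with $\sup_n m_1^n(0)^2 \le c_{\pi(0)}^2 < \infty$ (note $\sum_j j\pi^n_j(0) \le 1 + \sum_j j^2 \pi^n_j(0)$) yields \eqref{eqn:probmzrtight}.

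The subtlety I should be careful about is that $m_1^n(t)$ and $m_2^n(t)$ are a priori infinite, or at least there is a tail-exchange issue when differentiating an infinite sum along the jump process; this is exactly what Lemma \ref{lem:tailbound} is for. I would introduce the truncated functionals $m_1^{n,N}(t) \doteq \sum_{j=0}^N j\pi^n_j(t)$, derive the semimartingale decomposition and the estimates above for each $N$ with constants uniform in both $n$ and $N$, and then let $N\to\iy$ using monotone convergence (the truncated sums increase to the full sum) on the left-hand side and Lemma \ref{lem:tailbound} to control the boundary terms that appear from truncation (the only $j=N$ boundary term in $\sum_{j\le N} j(\pi^n_{j+1}-\pi^n_j)$ is $N\pi^n_{N+1} \ge 0$, which only helps the upper bound, and its expectation vanishes as $N\to\iy$ by the lemma — so in fact this direction is clean). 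This gives \eqref{eqn:probmzrtight} rigorously.

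For \eqref{eqn:secmomentbound} I would repeat the exercise with $m_2^n$. Now an arrival that pushes a queue of length $j$ (one of the $k$ shortest among the $L$ selected) up to $j+1$ changes $j^2$ by $2j+1$, so the arrival drift for $m_2^n$ is of the form $\lambda \sum_j (2j+1)(\text{arrival rate into length-}j\text{ queues, normalized})$, which is bounded by $\kappa(1 + m_1^n(t))$ using \eqref{eqn:czetabound}-type estimates (the normalized arrival rate into length-$j$ queues is $\le c_\zeta \pi^n_j$). The departure drift is $\sum_{j\ge 1}(j^2 - (j-1)^2)\pi^n_j = \sum_{j\ge 1}(2j-1)\pi^n_j \le 2 m_1^n(t)$, and crucially it enters with a \emph{negative} sign in the exact computation $\sum_j j^2(\pi^n_{j+1}-\pi^n_j) = -\sum_{j\ge 0}(2j+1)\pi^n_{j+1} \le 0$ — wait, I need the departure term to produce the negative drift that makes the integral bound work, and indeed $k\sum_j j^2(\pi^n_{j+1}-\pi^n_j) = -k\sum_{j\ge 1}(2j-1)\pi^n_j$, so it does. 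Thus, in integrated-and-expected form, $\E m_2^{n,N}(t) + k\,\E\int_0^t \sum_{j\ge 1}(2j-1)\pi^n_j(s)\,ds \le m_2^{n,N}(0) + \kappa\int_0^t (1 + \E m_1^n(s))\,ds + (\text{vanishing boundary terms, handled by Lemma \ref{lem:tailbound}})$. The martingale term has zero expectation. Using \eqref{eqn:probmzrtight} to bound $\E m_1^n(s)$ uniformly and $\sup_n m_2^n(0) = c_{\pi(0)} < \infty$, the right side is bounded uniformly in $n$ and $N$; letting $N\to\iy$ by monotone convergence gives $\E\int_0^T \sum_{j\ge 1}(2j-1)\pi^n_j(s)\,ds \le \kappa'$, and since $\sum_j j^2\pi^n_j \le \sum_{j\ge 1}(2j-1)\pi^n_j + \sum_j j\pi^n_j$ this yields \eqref{eqn:secmomentbound}. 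I expect the main obstacle to be purely bookkeeping: getting the boundary terms from truncation and the sign of the departure drift exactly right, and verifying that all the constants coming from the $\zeta$-type bounds and the quadratic-variation estimate are genuinely uniform in $n$ (this last point is essentially already done in the computation leading to \eqref{eqn:quadMRate}).
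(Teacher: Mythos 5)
Your route to \eqref{eqn:probmzrtight} is genuinely different from the paper's and is arguably cleaner. The paper bounds the quadratic variation of $\sum_{j\le K} j M^n_j$ term by term via \eqref{eqn:crossquad}, losing all cancellation; the bound then contains a $\frac{1}{n}\int_0^T\sum j^2\pi^n_j\,dt$ term, and closing the loop requires a Gronwall argument saved by the $\frac{1}{n}$ prefactor. You instead observe the exact structural identity $\sum_j j\lan\Del_\ell, e_j\ran_2 = k$ for every $\ell\in\Sigma$ (mass conservation: each arrival adds exactly $k$ jobs), together with $\sum_j j\lan e_{i-1}-e_i, e_j\ran_2 = -1$, so that pathwise $\sum_j j M^n_j = \frac{k}{n}(\text{compensated arrivals}) - \frac{1}{n}(\text{compensated departures})$, whose predictable quadratic variation is exactly $\frac{k^2\lambda t}{n} + \frac{k}{n}\int_0^t\sum_{i\ge 1}\pi^n_i\,ds = O(1/n)$ with no moment quantities appearing. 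This avoids the Gronwall loop entirely. The price is that for the truncated martingale $\sum_{j\le N} j M^n_j$ the coefficients $\sum_{j\le N}j\lan\Del_\ell,e_j\ran_2$ are no longer uniformly bounded (they can be as negative as $-kN$ when some $\ell_i=N$, $i\le k$), so boundary terms of the form $N^2\sup_{s\le T}\pi^n_N(s)$ reappear both in the drift and in the quadratic variation; these are killed by Lemma \ref{lem:tailbound}, which is what you say, so the argument goes through. One small misstatement: the truncation boundary term $(N+1)\pi^n_{N+1}$ in $\sum_{j\le N}j(\pi^n_{j+1}-\pi^n_j)$ is a \emph{positive} contribution to your upper bound — it hurts rather than ``only helps'' — but since you correctly invoke Lemma \ref{lem:tailbound} to send it to zero, the conclusion is unaffected.

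For \eqref{eqn:secmomentbound}, there is a genuine error at the final step. The inequality you invoke,
\[
\sum_{j\ge 0} j^2\pi^n_j \;\le\; \sum_{j\ge 1}(2j-1)\pi^n_j + \sum_{j\ge 0} j\pi^n_j,
\]
is false: termwise it reads $j^2 \le 3j-1$, which already fails for $j\ge 3$. So a bound on $\E\int_0^T\sum(2j-1)\pi^n_j$ does \emph{not} yield \eqref{eqn:secmomentbound}. The error is easily repaired within your own framework: you wrote down
\[
\E\, m_2^{n,N}(t) + k\,\E\!\int_0^t\sum_{j\ge 1}(2j-1)\pi^n_j(s)\,ds \;\le\; m_2^{n,N}(0) + \kappa\int_0^t\bigl(1+\E\, m_1^n(s)\bigr)ds + (\text{boundary terms}),
\]
and then discarded the nonnegative term $\E\, m_2^{n,N}(t)$ from the left side. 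Keep it instead. The right side is bounded uniformly in $n$, $N$, and $t\in[0,T]$ (by $\sup_n m_2^n(0)\le c_{\pi(0)}$, by \eqref{eqn:probmzrtight}, and with the boundary terms handled by Lemma \ref{lem:tailbound} as $N\to\iy$). Hence $\sup_{n}\sup_{0\le t\le T}\E\, m_2^n(t)<\iy$ after monotone convergence in $N$, and \eqref{eqn:secmomentbound} follows by Fubini: $\E\int_0^T\sum j^2\pi^n_j(t)\,dt = \int_0^T\E\, m_2^n(t)\,dt \le T\sup_{t}\E\, m_2^n(t)$. This is essentially the same Abel summation the paper performs in \eqref{eqn:piSecMoment2} (the paper integrates the identity over $t$ first, so $\E\int m_2^n$ appears directly on the left), so once the false inequality is removed your second argument is a close variant of the paper's rather than a new route.
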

\begin{proof}
	Since $\pi^n(t)=\pi^n(0)+A^n(t)+M^n(t)$, we can write for fixed $K \in \NN$
	\begin{equation}\label{eqn:firstmomExpan}
	\begin{aligned}
	\E\sup_{0\leq t\leq T}\left|\sum_{j=0}^K j\pi_j^n(t)\right|^2
	&\leq 3\left|\sum_{j=0}^K j\pi_j^n(0)\right|^2+3\E\sup_{0\leq t\leq T}\left|\sum_{j=0}^K jA^n_j(t)\right|^2+3\E\sup_{0\leq t\leq T}\left|\sum_{j=0}^K jM^n_j(t)\right|^2.
	\end{aligned}
	\end{equation}
	Using \eqref{eqn:jexpansion5}, for $K\in\NN$, we can write
	\begin{equation}\label{eqn:jexpansion3}
	\begin{aligned}
	\sum_{j=0}^K j\sum_{\ell\in\Sigma}\lan\Del_\ell, e_j\ran_2\prod_{i=0}^\iy\binom{n\pi^n_i(s)}{\rho_i(\ell)}
	&=\sum_{j=1}^K j\left[\zeta(j-1,n\pi^n(s))-\zeta(j,n\pi^n(s))\right]\\
	&=\sum_{j=0}^{K-1} \zeta(j,n\pi^n(s))-K\zeta(K,n\pi^n(s))
	\end{aligned}
	\end{equation}
	and
	\begin{equation}\label{eqn:jexpansion4}
	k\sum_{j=0}^K j[\pi_{j+1}^n(s)-\pi_{j}^n(s)]
	= -k\left(\sum_{j=1}^K\pi_j^n(s)-K\pi_{K+1}^n(s)\right).
	\end{equation}
	Using similar bounds as in \eqref{eqn:Zineq}, for some $c_\zeta\in(0,\iy)$
	\begin{equation}\label{eqn:zetaineq}
	\zeta(j,n\pi^n(s))\leq c_{\zeta}n^L \pi_j^n(s).
	\end{equation}
	The above bound implies that for some $\kappa_1\in (0,\infty)$, for all $n, K \in \NN$
	\begin{equation*}
	\E\sup_{0\leq t\leq T}\left[\frac{\lambda }{\binom{n}{L}}\int_0^t\sum_{j=1}^K\zeta(j-1,n\pi^n(s))+k\int_0^t\sum_{j=0}^K\pi_j^n(s)ds\right]^2\leq \E\left[\left(c_\zeta n^L\frac{\lambda}{\binom{n}{L}}+k\right)T\right]^2
	\leq \kappa_1.
	\end{equation*}
	Combined with \eqref{eqn:AjzetaExp}, \eqref{eqn:jexpansion3}, and \eqref{eqn:jexpansion4}, the above estimate gives, for all $n, K \in \NN$,
	\begin{equation}
		\label{eq:eq1030}
		\E\sup_{0\leq t\leq T}\left|\sum_{j=0}^K jA_j^n(t)\right|^2 \le
		\kappa_2\left(1+ K \E\left[\sup_{0\leq t\leq T} (\pi^n_{K}(t) + \pi^n_{K+1}(t))\right]\right).
	\end{equation}

	We now consider $\E\sup_{0\leq t\leq T}|\sum_{j=0}^K jM^n_j(t)|^2$. Since $\sum_{j=0}^K jM^n_j(t)$ is a martingale, Doob's inequality implies that
	\begin{equation}\label{eqn:BGD}
	\begin{aligned}
	\E\sup_{0\leq t\leq T}\left|\sum_{j=0}^K jM^n_j(t)\right|^2
	&\leq 
	4\E\left\lan\sum_{j=0}^K  jM^n_j\right\ran(T)
	=  4\E\sum_{j_1=0}^K\sum_{j_2=0}^K j_1j_2\lan M^n_{j_1},M^n_{j_2}\ran(T).
	\end{aligned}
	\end{equation}
	The diagonal terms ($j_1=j_2$) in the above sum are given by \eqref{eqn:Mjs}. We now consider the off-diagonal terms. Fix $0\le j_1<j_2\le K$ and note that in order to compute $\lan M^n_{j_1},M^n_{j_2}\ran(T)$ we must expand
	%
	%
	\begin{align}\label{eqn:incoming2}
	Z(j_1,j_2,n\pi^n(s))\doteq\sum_{\ell\in\Sigma} \lan e_{j_1},\Del_\ell \Del_\ell^T e_{j_2}\ran_2\prod_{i=0}^\iy\binom{n\pi^n_i(s)}{\rho_i(\ell)}.
	\end{align}
	Similar to \eqref{eqn:incoming1}, the $\ell$-th term in \eqref{eqn:incoming2} is the contribution from jobs that request servers with queue length configuration $\ell$. 
	A fixed $\ell\in\Sigma$ will make non-zero contribution to $\lan e_{j_1},\Del_\ell\Del_\ell^Te_{j_2}\ran_2$ if ($j_1$ or $j_1-1$) \emph{and} ($j_2$ or $j_2-1$) are among the $k$-smallest coordinates in $\ell$. 
	I.e. for a fixed $\ell\in\Sigma$, the $\ell$-th term is nonzero only if ($j_1$ or $j_1-1$) is a member of the set $(\ell_1,\ldots,\ell_k)$ and ($j_2$ or $j_2-1$) is also a member. The contribution from all such $\ell$'s in the sum \eqref{eqn:incoming2} can be counted in a method analogous to the one used to obtain \eqref{eqn:Zdef}. Namely, we count the number of choices of servers with queue length less than $j_1-1$, equal to $j_1-1$, equal to $j_1$, between $j_1$ and $j_2-1$, equal to $j_2-1$, equal to $j_2$, and larger than $j_2$. One must be careful in the cases $j_2 -1= j_1$ and $j_2-1 = j_1+1$. In both cases there are no servers with length between $j_1$ and $j_2-1$. 
	In the first case above ($j_2-1=j_1$), we must also be careful not to double count. To ensure this we include an indicator function $1_{\{j_2>j_1+1\}}$ in the upper index of the binomial coefficient corresponding to the selection of servers with queue length equal to $j_2-1$.
	Combining these observations we see that for $x\in n\cls_n$,
	\begin{equation}\label{eqn:Zdef4}
	\begin{aligned}
	Z(j_1,j_2,x)&=\sum_{\ell\in\Sigma}\lan e_{j_1},\Del_\ell\Del_\ell^Te_{j_2}\ran_2 \prod_{i=0}^\iy\binom{x_i}{\rho_i(\ell)}\\
	&=\sum_{i_1=0}^{k-2}\binom{\sum_{m=0}^{j_1-2}x_m}{i_1}\sum_{i_2=0}^{k-i_1-1}\binom{x_{j_1-1}}{i_2}\sum_{i_3=0}^{k-i_1-i_2-1}[i_2-i_3]\binom{x_{j_1}}{i_3}\\
	&\qquad\times\sum_{i_4=0}^{k-i_1-i_2-i_3-1}
	\binom{\sum_{m=j_1+1}^{j_2-2}x_m}{i_4}\sum_{i_5=0}^{L-\sum_{n=1}^4i_n}\binom{x_{j_2-1}1_{\{j_2>j_1+1\}}}{i_5}\\
	&\qquad\times\sum_{i_6=0}^{L-\sum_{n=1}^5i_n}\left[(1_{\{j_2=j_1+1\}}(i_3-i_5)+i_5)\wedge\left(k-\sum_{n=1}^4i_n\right)_+-i_6\wedge\left(k-\sum_{n=1}^5i_n\right)_+\right]\\
	&\qquad\times\binom{x_{j_2}}{i_6}\binom{\sum_{m=j_2+1}^\iy x_m}{L-\sum_{n=1}^6i_n}.
	\end{aligned}
	\end{equation}
	For $j_1>j_2$ we define $Z(j_1,j_2,x)\doteq Z(j_2,j_1,x)$. The contribution to $\lan M^n_{j_1},M^n_{j_2}\ran(T)$, for $j_1\neq j_2$, from completed jobs is similarly given as
	\begin{equation}\label{eqn:outgoing2}
	\begin{aligned}
	&\sum_{i=1}^\iy\lan e_{j_1},(e_{i-1}-e_i)(e_{i-1}-e_i)^Te_{j_2}\ran_2\pi_i^n(s)
	=-1_{\{j_1=j_2-1\}}\pi^n_{j_2}(s)-1_{\{j_1-1=j_2\}}\pi^n_{j_1}(s).
	\end{aligned}
	\end{equation}
	Combining \eqref{eqn:Zdef4} and \eqref{eqn:outgoing2} gives, for $j_1,j_2\in\NN_0$,
	\begin{align}\label{eqn:Zdef2}
	\begin{aligned}
	\lan M^n_{j_1},M^n_{j_2}\ran(T)
	&=\frac{\lambda}{n\binom{n}{L}}\int_0^TZ(j_1,j_2,n\pi^n(s))ds\\
	&\qquad + \frac{k}{n}\int_0^T [1_{\{j_1=j_2\}}[\pi^n_{j_1}(s)+\pi^n_{j_1+1}]-1_{\{j_1=j_2-1\}}\pi^n_{j_2}(s)-1_{\{j_1-1=j_2\}}\pi^n_{j_1}(s)]ds,
	\end{aligned}
	\end{align}
	where, by convention, $Z(j,j,x)\doteq Z(j,x)$. Referring to the definition of $Z$ in \eqref{eqn:Zdef4}, note that 
	for $j_2>j_1+1$, $Z(j_1,j_2,x)=0$ unless ($i_2$ or $i_3$) are greater than zero and ($i_5$ or $i_6$) are greater than zero. 
	In the case that $j_2=j_1+1$, $Z(j_1,j_2,x)=0$ unless ($i_2$ or $i_3$) are greater than zero and ($i_3$ or $i_6$) are greater than zero. 
	Therefore \eqref{eqn:binominequality} implies there exists a $\ti c_Z\in(0,\iy)$ such that for $r\in \cls_n$ and $j_1< j_2$,
	\begin{equation}\label{eqn:offDiagBound}
	Z(j_1,j_2,nr)\leq\ti c_Zn^L[r_{j_1}r_{j_2}+r_{j_1-1}r_{j_2}+r_{j_1}r_{j_2-1}+r_{j_1-1}r_{j_2-1}+1_{\{j_2=j_1+1\}}r_{j_1}].
	\end{equation}
	Combining this with \eqref{eqn:Zineq} and \eqref{eqn:Zdef2}, we have for some $\kappa_3', \kappa_3 \in (0,\infty)$ and all $n, K \in \NN$
	\begin{equation}\label{eqn:crossquad}
	\begin{aligned}
	&\sum_{j_1=0}^K\sum_{j_2=0}^K j_1j_2\lan M^n_{j_1},M^n_{j_2}\ran(T)\\
	&\qquad\le
	\frac{\kappa_3'}{n}\left[\int_0^T\sum_{j_1=0}^K\sum_{j_2=0}^K (j_1+1)(j_2+1)\pi^n_{j_1}(t)\pi^n_{j_2}(t)dt+\int_0^T\sum_{j=1}^{K+1} j(j+1)\pi^n_j(t)dt\right]\\
	&\qquad\leq\frac{\kappa_3}{n}\left[\int_0^T\left(\sum_{j=0}^{K} j^2\pi^n_j(t)+(K+1)^2\pi^n_{K+1}(t)\right)dt+1\right].	
	\end{aligned}
	\end{equation}
	Recalling $\pi^n(t) = \pi^n(0)+A^n(t)+M^n(t)$ we have that for all $K, n \in \NN$ 
	\begin{equation*}
	\begin{aligned}
	\E\int_0^T\sum_{j=0}^K j^2\pi^n_j(t)dt
	&= \int_0^T\sum_{j=0}^K j^2\pi^n_j(0)dt+\E\int_0^T\sum_{j=0}^K j^2A^n_j(t)dt+ \int_0^T\E\sum_{j=0}^K j^2M^n_j(t)dt\\
	&\le \E\int_0^T\sum_{j=0}^K j^2A^n_j(t)dt + \kappa_4,
	\end{aligned}
	\end{equation*}
	where $\kappa_4=c_\pi(0)T$ and the last inequality follows on using the fact that $M^n_j(t)$ is a martingale.
	Thus, from \eqref{eqn:AjzetaExp}, for some $\kappa_5 \in (0,\infty)$ and all $K, n \in \NN$ 
	\begin{equation}\label{eqn:piSecMoment}
	\begin{aligned}
	\E\int_0^T\sum_{j=0}^K j^2\pi^n_j(t)dt
	&\le \frac{\kappa_5}{n^L}\E\int_0^T\sum_{j=1}^K j^2\int_0^t[\zeta(j-1,n\pi^n(s))-\zeta(j,n\pi^n(s))]ds\,dt\\
	&\qquad+\kappa_5\E\int_0^T\sum_{j=1}^K j^2\int_0^t[\pi^n_{j+1}(s)-\pi^n_j(s)]ds\,dt+\kappa_5.
	\end{aligned}
	\end{equation}
	Using the fact that for any $a_0,\ldots,a_K\in\RR$
	\begin{equation*}
	\sum_{j=1}^Kj^2[a_{j-1}-a_j]
	= \sum_{j=1}^K[(j-1)^2a_{j-1}-j^2a_j+(2j-1)a_{j-1}]
	= -K^2a_K+\sum_{j=1}^K(2j-1)a_{j-1}
	\end{equation*}
	and 
	\begin{equation*}
	\sum_{j=0}^Kj^2[a_{j+1}-a_j]
	= \sum_{j=0}^K[(j+1)^2a_{j+1}-j^2a_j-(2j+1)a_{j+1}]
	= (K+1)^2a_{K+1}-\sum_{j=0}^K(2j+1)a_{j+1}
	\end{equation*}
	in \eqref{eqn:piSecMoment} we have that, for some $\kappa_6 \in (0,\infty)$ and all $K, n \in \NN$ 
	\begin{equation}\label{eqn:piSecMoment2}
	\begin{aligned}
	\E\int_0^T\sum_{j=0}^K j^2\pi^n_j(t)dt
	&\leq\frac{\kappa_5}{n^L}\E\int_0^T\int_0^t\sum_{j=0}^K (2j-1)\zeta(j-1,n\pi^n(s))ds\,dt\\
	&\quad+\kappa_5\E\int_0^T\int_0^t (K+1)^2\pi^n_{K+1}(s)ds\,dt+\kappa_5\\
	&\leq \kappa_6\E\int_0^T [K^2\sup_{0\le s \le t}\pi^n_{K+1}(s)+\sup_{0\le s \le t}\sum_{j=0}^K j\pi^n_{j}(s)] dt+\kappa_6
	\end{aligned}
	\end{equation}
	where the second inequality follows from \eqref{eqn:zetaineq}. 
	Thus it follows from \eqref{eqn:BGD} and \eqref{eqn:crossquad} that for some $\kappa_7\in (0,\infty)$
	\begin{align}
	\E\sup_{0\leq t\leq T}\left|\sum_{j=0}^K jM^n(t)\right|^2
	&\leq \frac{\kappa_3}{n}\left[\int_0^T\E\sum_{j=0}^K j^2\pi^n_j(t)dt+\gamma^n_{K}T+1\right]\nonumber\\
	&\leq
	\frac{\kappa_7}{n}\left[1 +\gamma^n_K + \int_0^T\E\sup_{0\leq u\leq s}\left|\sum_{j=0}^K j\pi^n_j(u)\right|^2ds\right],\label{eqn:MsecMoment}
	\end{align}
	where $\gamma^n_K = \E(K^2 \sup_{0\leq s\leq T} \pi^n_{K+1}(s))$.
	Combining  \eqref{eqn:firstmomExpan}, \eqref{eq:eq1030}, \eqref{eqn:MsecMoment}, and using the fact that $\left|\sum_{j=0}^\iy j\pi^n_j(0)\right|\leq c_{\pi(0)}$,
	\begin{align*}
		\E\sup_{0\leq t\leq T}\left|\sum_{j=0}^K j\pi^n_j(t)\right|^2&\leq
		\kappa_8 \left(1 + \EE\sup_{0\leq t\leq T} \left|\sum_{j=0}^K jA^n_j(t)\right|^2 + \EE\sup_{0\leq t\leq T} \left|\sum_{j=0}^K jM^n_j(t)\right|^2 \right)\\
		&\le \kappa_9\left(1 + \gamma^n_K + \frac{1}{n}\int_0^T\E\sup_{0\leq s\leq t}\left|\sum_{j=0}^K j\pi^n_j(s)\right|^2 ds\right).
	\end{align*}
	By Gronwall's lemma (since the above inequality also holds for all $T_1\le T$), there is a $\kappa_{10}\in (0,\infty)$ such that for all $n,K\in \NN$
	\begin{align*}
		\E\sup_{0\leq t\leq T}\left|\sum_{j=0}^K j\pi^n_j(t)\right|^2&\leq \kappa_{10}\left(1 + \gamma^n_K\right).
	\end{align*}		
	Sending $K\to \infty$ and recalling from Lemma \ref{lem:tailbound} that for each fixed $n$, as $K\to \infty$,
	$\gamma^n_K \to 0$ 
	 we have for all $n$
	\begin{equation*}
	\E\sup_{0\leq t\leq T}\left|\sum_{j=0}^{\infty} j\pi^n_j(t)\right|^2\leq \kappa_{10}
	\end{equation*}
	where $\kappa_{10}$ is independent of $n$. This proves \eqref{eqn:probmzrtight}. Finally, \eqref{eqn:secmomentbound} follows from \eqref{eqn:probmzrtight} upon sending $K\to\iy$ in \eqref{eqn:piSecMoment2}.
\end{proof}

\subsection{Tightness}\label{sec:diffTightness}
We now proceed with the proof of tightness of $\{(X^n,\bar{M}^n)\}_{n\in\NN}$.
Let for $M\in\RR_+$,
\begin{equation*}
\clv_M \doteq \left\{r\in\cls\bigg|\sum_{i=0}^\iy ir_i\leq M\right\},
\end{equation*}
where $\clv_M$ is equipped with the topology inherited from $\bfell_2$.
We begin by establishing the following Lipschitz property for $F$ on $\clv_M$.
\begin{lemma}\label{lem:lipschitz2}
The map $F$ is a Lipschitz function from $\clv_M$ to $\bfell_2$ for each $M\in\RR_+$. Namely, there exists an $C(M)\in(0,\iy)$ such that for any $r,\ti r\in\clv_M$, 
\begin{equation}\label{eqn:LipschitzDisplay}
\|F(r)-F(\ti r)\|_2\leq C(M)\|r-\ti r\|_2.
\end{equation}
\end{lemma}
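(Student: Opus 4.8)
The plan is to mimic the computation in the proof of Lemma~\ref{lem:lipschitz}, but to track carefully where the $\bfell_1$-norm of $r-\ti r$ enters and to replace the offending estimate using the extra structure of $\clv_M$. Write $S_j(r)\doteq\sum_{m=0}^{j-1}r_m$ and $T_j(r)\doteq\sum_{m=j+1}^{\iy}r_m$, so that by \eqref{eqn:zetabardef} the map $r\mapsto\bar\zeta(j,r)$ is, for each $j$, a fixed finite linear combination (coefficients depending only on $L,k$) of the monomials $S_j(r)^{i_1}r_j^{i_2}T_j(r)^{L-i_1-i_2}$ with $0\le i_1\le k-1$, $1\le i_2\le L-i_1$; the differences of these monomials are the $R_{j,i_1,i_2}(r,\ti r)$ of \eqref{eqn:Rdef}. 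From \eqref{eqn:codingF}, the relation $\bar\zeta^{\del}(j,r)=\bar\zeta(j-1,r)-\bar\zeta(j,r)$, the triangle inequality in $\bfell_2$, and the fact that $r\mapsto k\sum_{j\ge0}[r_{j+1}-r_j]e_j+r_0e_0$ is a bounded linear operator on $\bfell_2$ (operator norm $\le\kappa(k)$), we obtain $\|F(r)-F(\ti r)\|_2\le\lambda L!\big(\sum_{j\ge0}(D_{j-1}-D_j)^2\big)^{1/2}+\kappa(k)\|r-\ti r\|_2$, where $D_j\doteq\bar\zeta(j,r)-\bar\zeta(j,\ti r)$ and $D_{-1}=0$ (because $r_{-1}=\ti r_{-1}=0$ and the inner sum in \eqref{eqn:zetabardef} starts at $i_2=1$). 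Since $\sum_{j\ge0}(D_{j-1}-D_j)^2\le4\sum_{j\ge0}D_j^2$, it suffices to prove $\sum_{j\ge0}D_j^2\le\kappa(L,k)(M+1)\|r-\ti r\|_2^2$, and, as $D_j$ is a fixed linear combination of the $R_{j,i_1,i_2}(r,\ti r)$, it is in turn enough to bound $\sum_{j\ge0}|R_{j,i_1,i_2}(r,\ti r)|^2$ for each admissible pair $(i_1,i_2)$.

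Fix such a pair and apply the algebraic identity \eqref{eqn:tripleIneq} with $a=S_j(r)^{i_1}$, $b=T_j(r)^{L-i_1-i_2}$, $c=r_j^{i_2}$ (tildes for $\ti r$). Because $S_j,T_j,r_j,\ti r_j\in[0,1]$, using $|x^i-y^i|\le L|x-y|$ for $x,y\in[0,1]$ and $i\le L$, and $\ti r_j^{i_2}\le\ti r_j$ (valid since $i_2\ge1$), one arrives at
\[
|R_{j,i_1,i_2}(r,\ti r)|\le L\big(|r_j-\ti r_j|+\ti r_j\,|T_j(r)-T_j(\ti r)|+\ti r_j\,|S_j(r)-S_j(\ti r)|\big).
\]
The first term contributes $\le L^2\|r-\ti r\|_2^2$ after summing in $j$, with no constraint needed. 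The remaining two terms are exactly where the restriction to $\clv_M$ is used. The naive bound $|T_j(r)-T_j(\ti r)|\le\|r-\ti r\|_1$ (implicitly used in Lemma~\ref{lem:lipschitz}) is too weak in the $\bfell_2$ setting; instead I use that $r,\ti r$ are probability vectors, so $\sum_{m\ge0}(r_m-\ti r_m)=0$ and hence $T_j(r)-T_j(\ti r)=-\sum_{m=0}^{j}(r_m-\ti r_m)$ is a \emph{partial} sum. Cauchy--Schwarz then gives $|T_j(r)-T_j(\ti r)|^2\le(j+1)\|r-\ti r\|_2^2$ and, similarly, $|S_j(r)-S_j(\ti r)|^2\le j\|r-\ti r\|_2^2$. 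Summing in $j$, using $\ti r_j^2\le\ti r_j$ and the bound $\sum_{j\ge0}(j+1)\ti r_j\le M+1$ available on $\clv_M$,
\[
\sum_{j\ge0}\ti r_j^2\,|T_j(r)-T_j(\ti r)|^2\le(M+1)\|r-\ti r\|_2^2,\qquad\sum_{j\ge0}\ti r_j^2\,|S_j(r)-S_j(\ti r)|^2\le M\|r-\ti r\|_2^2.
\]
Combining the three contributions gives $\sum_{j\ge0}|R_{j,i_1,i_2}(r,\ti r)|^2\le\kappa(L)(M+1)\|r-\ti r\|_2^2$, and summing over the finitely many pairs $(i_1,i_2)$ and back through the reductions of the previous paragraph yields \eqref{eqn:LipschitzDisplay} with $C(M)=\kappa(\lambda,L,k)\sqrt{M+1}$.

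The only genuinely new point — and the step I expect to be the crux — is the observation in the last paragraph: recognizing that the difference of tails $T_j(r)-T_j(\ti r)$ must be rewritten as a difference of \emph{partial} sums, so that the Cauchy--Schwarz loss is a factor of order $j$ which the first-moment bound $\sum_j jr_j\le M$ built into $\clv_M$ can pay for. Everything else is a bookkeeping rearrangement of the estimates already established in the proof of Lemma~\ref{lem:lipschitz}.
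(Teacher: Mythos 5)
Your proof is correct and follows essentially the same route as the paper's: you reduce to bounding $\sum_j R_{j,i_1,i_2}^2$, use the identity \eqref{eqn:tripleIneq}, pull out one factor $\ti r_j$ via $\ti r_j^{i_2}\le\ti r_j$, convert the tail/head differences into partial sums and apply Cauchy--Schwarz to pick up a factor $j$ or $j+1$, and then pay for it with the first-moment bound on $\clv_M$. The only difference is that you spell out explicitly the rewriting $T_j(r)-T_j(\ti r)=-\sum_{m=0}^j(r_m-\ti r_m)$ (using $r,\ti r\in\cls$), which the paper leaves implicit in the phrase ``an application of the Cauchy--Schwartz inequality'' leading to \eqref{eqn:Rineq2}.
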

\begin{proof}
Fix $M\in\RR_+$. Let $r,\ti r\in\clv_M$ and, for  $i_1\in\NN_0$ and $j,i_2\in\NN$, recall $R_{j,i_1,i_2}(r,\ti r)$ from \eqref{eqn:Rdef}.
Using \eqref{eqn:tripleIneq} and the fact that $r,\ti r\in\cls$, we have
\begin{equation*}
\begin{aligned}
(R_{j,i_1,i_2}(r,\ti r))^2
&\leq 3[r_{j}^{i_2}-\ti r_{j}^{i_2}]^2\\
&\qquad +3\ti r_{j}^{2i_2}\left[\left(\sum_{m=j+1}^\iy r_m\right)^{L-i_1-i_2}-\left(\sum_{m=j+1}^\iy \ti r_m\right)^{L-i_1-i_2}\right]^2\\
&\qquad+3\ti r_{j}^{2i_2}\left[\left(\sum_{m=0}^{j-1} r_m\right)^{i_1}-\left(\sum_{m=0}^{j-1} \ti r_m\right)^{i_1}\right]^2.
\end{aligned}
\end{equation*}
By an argument similar to the one used to derive \eqref{eqn:Rineq} and an application of the Cauchy Schwartz inequality we have the following inequality for all $i_1,i_2\leq L,\ i_2>0,$
\begin{equation}\label{eqn:Rineq2}
\begin{aligned}
 \left(R_{j,i_1,i_2}(r,\ti r)\right)^2
&\leq \kappa_1\left([r_{j}-\ti r_{j}]^2 +(j+1)\ti r_{j}\|r-\ti r\|_2^2\right).
\end{aligned}
\end{equation}
Using arguments analogous to those in the proof of Lemma \ref{lem:lipschitz} we have
\begin{equation}\label{eqn:Fineq2}
\begin{aligned}
\|F(r)-F(\ti r)\|_2
&\leq  \kappa_2\lambda L!\left(\sum_{j=0}^\iy\sum_{i_1=0}^{k-1}\sum_{i_2=1}^{L-i_1}[R_{j,i_1,i_2}(r,\ti r)]^2\right)^{1/2}+ 2k\left(\sum_{j=0}^\iy(r-\ti r)_j^2\right)^{1/2}\\
&\leq \kappa_3\left(\sum_{j=0}^\iy\left[[r_{j}-\ti r_{j}]^2 +(j+1)\ti r_{j}\|r-\ti r\|_2^2\right]\right)^{1/2}+ 2k\|r-\ti r\|_2\\
&\leq \kappa_4\|r-\ti r\|_2\left(1 +\sum_{j=0}^\iy j\ti r_{j}\right)^{1/2}+ 2k\|r-\ti r\|_2.
\end{aligned}
\end{equation}
Since $r,\ti r\in\clv_M$, \eqref{eqn:Fineq2} gives
\begin{equation*}
\|F(r)-F(\ti r)\|_2
\leq \kappa_4(M+1)^{1/2}\|r-\ti r\|_2+ 2k\|r-\ti r\|_2
\end{equation*}
and thus with $C(M)\doteq \kappa_4(M+1)^{1/2}+2k$, \eqref{eqn:LipschitzDisplay} is satisfied for all $r,\ti r\in\clv_M$ which proves the result.
\end{proof}

Recall the process $X^n$ introduced in \eqref{eqn:Xndef} and $\bar{M}^n$ defined below \eqref{eqn:Abardef}.
The following proposition gives tightness of $\{(X^n,\bar{M}^n)\}_{n\in\NN}$.
\begin{proposition}\label{prop:diffTightness}
Suppose that $\{\pi^n\}_{n\in\NN}$ is as in the statement of Theorem \ref{thm:LLN} with $\sup_{n\in\NN}\sum_{j=0}^\iy  j^2\pi^n_j(0)<\iy$. Let $X^n(0)=\sqrt{n}(\pi^n(0)-\pi_0)$ and suppose that \eqref{eqn:XinitBound} is satisfied.
Then $\{(X^n,\bar{M}^n)\}_{n\in\NN}$ is a $\CC$-tight sequence of $\DD([0,T]:(\bfell_2)^2)$-valued random variables.
\end{proposition}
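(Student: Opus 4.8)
\medskip
\noindent\textbf{Proof strategy.}
The plan is to work from the semimartingale decomposition \eqref{eqn:semiMart}, $X^n = X^n(0) + \bar A^n + \bar M^n$, and to verify a tightness criterion for Hilbert-space-valued semimartingales (cf.\ \cite{joffe1986weak,metivier1982semimartingales}) for the pair $(X^n,\bar M^n)$, regarded as a $(\bfell_2)^2$-valued semimartingale with finite-variation part $(\bar A^n,0)$ and local-martingale part $(\bar M^n,\bar M^n)$. Three things have to be checked: (i) a compact-containment condition in $(\bfell_2)^2$, uniform in $n$; (ii) an Aldous-type modulus estimate for $\bar A^n$ and for $\langle\bar M^n\rangle\doteq\sum_j\langle\bar M^n_j\rangle$; (iii) that the jumps of $(X^n,\bar M^n)$ vanish in $(\bfell_2)^2$-norm, which turns tightness into $\CC$-tightness. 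Since $F$ is $\bfell_2$-Lipschitz only on the sets $\clv_M$ (Lemma \ref{lem:lipschitz2}), I would first localize. Fix $M$ and set $\sigma^n_M\doteq\inf\{t\ge0:\sum_j j\pi^n_j(t)>M\}$; as each jump of $t\mapsto\sum_j j\pi^n_j(t)$ has size at most $k/n$, one gets $\pi^n(s)\in\clv_{M+k}$ for $s\le\sigma^n_M$. A routine moment estimate for the ODE \eqref{eqn:ODE} --- summation by parts in \eqref{eqn:codingF} together with the bound $\bar\zeta(j,r)\le c_\zeta r_j$ --- gives $\sup_{0\le s\le T}\sum_j j\pi_j(s)\doteq\mu_1^*<\iy$ and $\sup_{0\le s\le T}\sum_j j^2\pi_j(s)\doteq\mu_2^*<\iy$, the latter using $\sum_j j^2\pi_0(j)<\iy$ (a consequence of the hypothesis on $\pi^n(0)$ and Fatou's lemma). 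Hence $\pi(s)\in\clv_{\mu_1^*}$ for all $s$, and on $[0,\sigma^n_M\wedge T]$ both $\pi^n(s)$ and $\pi(s)$ lie in $\clv_{M'}$, $M'\doteq\max(M+k,\mu_1^*)$, where Lemma \ref{lem:lipschitz2} is available.

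\medskip
\noindent\textbf{Step 1: a uniform second moment bound for the stopped process.} By \eqref{eqn:AtoF}, $\sup_{0\le t\le T}\|\sqrt n\,V^n(t)\|_2\le\kappa n^{-1/2}$, where $V^n(t)=A^n(t)-\int_0^tF(\pi^n(s))\,ds$; since $\bar A^n(t)=\sqrt n\,V^n(t)+\sqrt n\int_0^t[F(\pi^n(s))-F(\pi(s))]\,ds$, Lemma \ref{lem:lipschitz2} gives, on $[0,\sigma^n_M\wedge T]$, that $\|\bar A^n(t\wedge\sigma^n_M)\|_2\le\kappa n^{-1/2}+C(M')\int_0^t\|X^n(s\wedge\sigma^n_M)\|_2\,ds$. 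By \eqref{eqn:Mjs} and \eqref{eqn:Zineq}, $\langle\bar M^n\rangle(T)=n\sum_j\langle M^n_j\rangle(T)$ is bounded above by an $n$-independent constant, so Doob's inequality yields $\E\sup_{0\le t\le T}\|\bar M^n(t)\|_2^2\le4\,\E\langle\bar M^n\rangle(T)\le\kappa$. Inserting these into $\|X^n(t\wedge\sigma^n_M)\|_2\le\|X^n(0)\|_2+\|\bar A^n(t\wedge\sigma^n_M)\|_2+\|\bar M^n(t\wedge\sigma^n_M)\|_2$, squaring, taking expectations and invoking Gronwall's lemma yields $\sup_n\E\sup_{0\le t\le T}\|X^n(t\wedge\sigma^n_M)\|_2^2\le\kappa(M)<\iy$ (here $\sup_n\|X^n(0)\|_2<\iy$ because $X^n(0)\to x_0$ in $\bfell_2$).

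\medskip
\noindent\textbf{Step 2: compact containment and the modulus/jump conditions.} As bounded sets in $\bfell_2$ are not precompact, I would strengthen Step 1 to the weighted bound $\sup_n\E\sup_{0\le t\le T}\sum_j j\,(X^n_j(t\wedge\sigma^n_M))^2\doteq c^*(M)<\iy$. This follows from repeating the Gronwall argument with the weight $j$: the initial term is dominated by \eqref{eqn:XinitBound}; the martingale term $4\,\E\sum_j j\,n\langle M^n_j\rangle(T)$ is controlled by \eqref{eqn:Mjs}, \eqref{eqn:Zineq} and \eqref{eqn:probmzrtight}; the $\sqrt n\,V^n$ contribution is $O(n^{-1})$; and the coordinatewise estimate
\[
\sqrt n\,|F_j(\pi^n(s))-F_j(\pi(s))|\le\kappa\big(|X^n_{j-1}(s)|+|X^n_j(s)|+|X^n_{j+1}(s)|\big)+\kappa\big((j\pi_{j-1}(s))^{1/2}+((j+1)\pi_j(s))^{1/2}\big)\|X^n(s)\|_2 ,
\]
which comes from \eqref{eqn:Rbound1}--\eqref{eqn:Rineq2} with $\ti r=\pi(s)$, gives after multiplying by $j$ and summing
\[
\sum_j j\,\big(\sqrt n\,|F_j(\pi^n(s))-F_j(\pi(s))|\big)^2\le\kappa\Big(\sum_j j\,(X^n_j(s))^2+(1+\mu_2^*)\,\|X^n(s)\|_2^2\Big),
\]
so that Gronwall closes once the $\|X^n\|_2^2$-term is absorbed using Step 1. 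Markov's inequality then gives $\sup_n\PP\big(\sup_{t\le T}\sum_{j\ge N}(X^n_j(t\wedge\sigma^n_M))^2>\delta\big)\le c^*(M)/(N\delta)\to0$ as $N\to\iy$ for every $\delta>0$, which with Step 1 is the compact-containment condition for $X^n(\cdot\wedge\sigma^n_M)$; the analogous estimate $\E\sup_t\sum_j j\,(\bar M^n_j(t))^2\le4\,\E\sum_j j\,\langle\bar M^n_j\rangle(T)\le\kappa$ gives it for $\bar M^n(\cdot\wedge\sigma^n_M)$, hence for the pair. The jumps of $X^n$ and $\bar M^n$ are $O(n^{-1/2})$ in $\bfell_2$-norm (as in the proof of Proposition \ref{prop:tightness}), giving (iii); and for (ii), the above bound on $\|\bar A^n(t\wedge\sigma^n_M)-\bar A^n(r\wedge\sigma^n_M)\|_2$ combined with Step 1, together with the fact that $\frac{d}{dt}\langle\bar M^n\rangle(t)$ is bounded by an $n$-independent constant (from \eqref{eqn:Mjs}, \eqref{eqn:Zineq}), yield the Aldous estimates. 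Thus $\{(X^n(\cdot\wedge\sigma^n_M),\bar M^n(\cdot\wedge\sigma^n_M))\}_n$ is $\CC$-tight for each fixed $M$.

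\medskip
\noindent\textbf{Step 3: de-localization, and the main difficulty.} By \eqref{eqn:probmzrtight} and Markov's inequality, $\sup_n\PP(\sigma^n_M<T)\le M^{-2}\sup_n\E\sup_t(\sum_j j\pi^n_j(t))^2\to0$ as $M\to\iy$. On $\{\sigma^n_M\ge T\}$ the stopped pair coincides with $(X^n,\bar M^n)$ on $[0,T]$ and has the same maximal jump, so a routine argument (let $n\to\iy$ for each $M$, then let $M\to\iy$) transfers $\CC$-tightness and the vanishing of the jumps to $\{(X^n,\bar M^n)\}_n$, which is the assertion. The main obstacle is the interplay of two features: $F$ is $\bfell_2$-Lipschitz only on the $\clv_M$, which forces the stopping-time localization and its subsequent removal; and compact containment in the infinite-dimensional space $\bfell_2$ requires propagating the weighted moment bounds of Lemma \ref{lem:mombd} from $\pi^n$ to $X^n$ by a weighted Gronwall estimate --- where using the weight $j$ rather than $j^2$ is precisely what keeps the needed input at the level of a \emph{second} moment of $\pi_0$, which is all that is assumed in Theorem \ref{thm:mainResult}.
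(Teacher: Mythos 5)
Your proposal is correct and reaches the same conclusion, but it follows a genuinely different technical route than the paper's proof. The main differences are as follows. You localize with the stopping time $\sigma^n_M\doteq\inf\{t:\sum_j j\pi^n_j(t)>M\}$ and de-localize at the end via $\sup_n\PP(\sigma^n_M<T)\to 0$, whereas the paper works on the \emph{event} $B^n_M=\{\sup_t\sum_j j\pi^n_j(t)\le M\}$ for part~(a) of Theorem~\ref{thm:AppHilbertTight} and the Aldous condition, and proves the weighted second-moment bound for part~(b) entirely unconditionally. The paper can avoid localization there because it uses the weight $j^2$ and, at the crucial step, replaces the Lipschitz bound of Lemma~\ref{lem:lipschitz2} by the coordinatewise estimate from \eqref{eqn:Rineq2} together with the Cauchy--Schwarz inequality $(\sum_m|X^n_m|)^2\le(\sum_m m^{-2})\sum_m m^2(X^n_m)^2$; since the same weighted quantity $\sum_m m^2(X^n_m)^2$ appears on both sides, the Gronwall argument is self-closing without any stopping. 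Your weight-$j$ version with the form $(j+1)\pi_j\|X^n\|_2^2$ of \eqref{eqn:Rineq2} does not close by itself (the remainder carries $\|X^n\|_2^2$, not $\sum_j j(X^n_j)^2$); you therefore need the bootstrap from your localized Step~1, which is why the stopping-time localization and the extra de-localization step are forced in your route. Both approaches are sound. Two small remarks: your closing claim that using weight $j$ rather than $j^2$ ``is precisely what keeps the needed input at the level of a second moment'' is not accurate --- the paper's weight-$j^2$ argument also requires only $\sup_n\sum_j j^2\pi^n_j(0)<\infty$ and \eqref{eqn:XinitBound}, because it uses $\int_0^T\sum_j j^2\pi_j(s)\,ds<\infty$ rather than a third moment (the trick is precisely the $m^{-2}$ Cauchy--Schwarz). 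Also, as in the paper, the Gronwall steps should strictly be carried out first with the truncated sums $\sum_{j\le K}$ (to guarantee a priori finiteness) and then pass $K\to\infty$; this is a routine point but worth stating. Finally, note that the paper's stronger weight-$j^2$ bound \eqref{eqn:XnsecondMoment} is re-used in Section~\ref{sec:diffConv} to show limit points take values in $\ti\bfell_2$; if you only prove the weight-$j$ version here you would have to re-derive the stronger bound later.
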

\begin{proof}
We will make use of Theorem \ref{thm:AppSemiMartTight} in the Appendix.
We first prove that $\{\bar{M}^n\}_{n\in\NN}$ is tight. 
In order to show that condition ($A$) in Theorem \ref{thm:AppSemiMartTight} is satisfied for $\{\bar{M}^n\}_{n\in\NN}$ it suffices (cf. Theorem 2.3.2 in \cite{joffe1986weak}) to show that the condition is satisfied for the real-valued process $\lan\bar{M}^n\ran(t)\doteq \sum_{j=0}^\iy\lan\bar{M}^n_j\ran(t)$.
Fix 
$\varepsilon \in (0,T]$ and $N \in [0, T-\varepsilon]$. Let $\tau_n\leq N$ be a sequence of $\{\clf^n_t\}$-stopping times. Then, \eqref{eqn:jexpansion5} and \eqref{eqn:zetaineq} imply that
for $\theta \in [0, \varepsilon]$
\begin{equation*}
\begin{aligned}
&|\lan\bar{M}^n(\tau_n+\theta)\ran-\lan\bar{M}^n(\tau_n)\ran|\\
&\qquad=\left|\sum_{j=0}^\iy\left[\int_{\tau_n}^{\tau_n+\theta}\sum_{\ell\in\Sigma}\lan\Del_\ell, e_j\ran_2 \frac{\lambda}{I(n)}\prod_{i=0}^\iy\binom{n\pi^n_i(s)}{\rho_i(\ell)}+k\int_{\tau_n}^{\tau_n+\theta}[\pi_{j+1}^n(s)-\pi_j^n(s)]ds\right]\right|\\
&\qquad\leq \kappa_{1}\sum_{j=0}^\iy\int_{\tau_n}^{\tau_n+\theta}[\pi_j^n(s)+\pi_{j-1}^n(s) + \pi_{j+1}^n(s)]ds\\
&\qquad\leq \kappa_{1}\varepsilon.
\end{aligned}
\end{equation*}
Proof of $(A)$ is now immediate. 

We next show that $\{\bar{M}^n\}_{n\in\NN}$ satisfies condition ($T_1$) in Theorem \ref{thm:AppSemiMartTight}. 
For this we will apply Theorem \ref{thm:AppHilbertTight}.
We first verify  $\{\bar{M}^n(t)\}_{n\in\NN}$ satisfies ($a$) of Theorem \ref{thm:AppHilbertTight} for all $t\in[0,T]$.
It follows from \eqref{eqn:quadMRate} that
\begin{equation}\label{eqn:24.9}
\sup_{n\in\NN}\E\lan \bar{M}^n\ran(T)=\sup_{n\in\NN}n\E\lan M^n\ran(T) \leq\kappa_2.
\end{equation}
This, combined with Doob's inequality, implies for each $N\in\NN$
\begin{equation*}
\sup_{n\in\NN}\sum_{i=0}^N\E\sup_{0\leq t\leq T}\left|\bar{M}_i^n(t)\right|
\leq N+\sup_{n\in\NN}\sum_{i=0}^N\E (\sup_{0\leq t\leq T}\bar{M}_i^n(t))^2
\leq N+\kappa_3.
\end{equation*}
Using Markov's inequality, part ($a$) of Theorem \ref{thm:AppHilbertTight} follows.

We now verify condition (b) in Theorem \ref{thm:AppHilbertTight} for $\{\bar{M}^n(t)\}_{n\in\NN}$ for each fixed $t\in[0,T]$. Note that $\lan\bar{M}^n_j\ran(t) = n\lan M^n_j\ran(t)$ and thus, from \eqref{eqn:Mjs} and \eqref{eqn:Zineq},
\begin{equation}\label{eqn:tripleStar}
\lan\bar{M}^n_j\ran(t)\leq \kappa_4\int_0^t(\pi^n_{j-1}(s)+\pi^n_{j}(s)+\pi^n_{j+1}(s))ds.
\end{equation}
It follows from  \eqref{eqn:tripleStar} and the Cauchy-Schwartz inequality that
\begin{equation}\label{eqn:MsqSum}
\begin{aligned}
\sum_{j=N}^\iy \E(\bar{M}_j^n(t))^2
&= \sum_{j=N}^\iy \E\lan \bar{M}_j^n(t)\ran
\leq \kappa_5\E\int_0^t\sum_{j=N-1}^\iy\pi^n_j(s)ds\\
&\leq \kappa_5\left(\sum_{j=N-1}^\iy\frac{1}{j^2}\right)^{1/2}\int_0^t\E\left(\sum_{j=N-1}^\iy j^2(\pi^n_j(s))^2\right)^{1/2}ds.
\end{aligned}
\end{equation}
From \eqref{eqn:secmomentbound},
\begin{equation}\label{eqn:piSecMoment3}
\sup_{n\in\NN}\E\int_0^T\sum_{j=0}^\iy j^2(\pi^n_j(s))^2ds
\leq\sup_{n\in\NN}\E\int_0^T\sum_{j=0}^\iy j^2\pi_j^n(s)ds 
\doteq \kappa_6
<\iy.
\end{equation} 
Using this observation in \eqref{eqn:MsqSum} we have
\begin{equation*}
\sum_{j=N}^\iy \E(\bar{M}_j^n(t))^2
\leq \kappa_7\left(\sum_{j=N-1}^\iy\frac{1}{j^2}\right)^{1/2}\int_0^t\E\left(\sum_{j=N-1}^\iy j^2\pi^n_j(s)\right)^{1/2}ds\leq \kappa_8\left(\sum_{j=N-1}^\iy\frac{1}{j^2}\right)^{1/2}.
\end{equation*}
From Markov's inequality we now see that for any $\del>0$
\begin{equation*}
\lim_{N\to\iy}\sup_{n\in\NN}\PP\left(\sum_{j=N}^\iy(\bar{M}^n_j(t))^2>\del\right)
=0
\end{equation*}
which verifies part (b) of Theorem \ref{thm:AppHilbertTight}. Thus we have shown that $\{\bar{M}^n(t)\}_{n\in\NN}$ is a tight sequence of $\bfell_2$-valued random variables for all $t\in[0,T]$. From Theorem \ref{thm:AppSemiMartTight} it now follows that $\{\bar{M}^n\}_{n\in\NN}$ is a tight sequence of $\DD([0,T]:\bfell_2)$-valued random variables.

We will now argue that $\{X^n\}_{n\in\NN}$ is a tight sequence of $\DD([0,T]:\bfell_2)$-valued random variables. Again, via Theorem \ref{thm:AppSemiMartTight}, it suffices to show that $\{X^n(t)\}_{n\in\NN}$ is tight for every $t\in[0,T]$ (which will follow from verifying conditions ($a$) and ($b$) in Theorem \ref{thm:AppHilbertTight}) and that $\{X^n\}_{n\in\NN}$ satisfies condition $(A)$ of Theorem \ref{thm:AppSemiMartTight}.  
We first show that,  for all $t\in[0,T]$, condition ($a$) in Theorem \ref{thm:AppHilbertTight} holds for $\{X^n(t)\}_{n\in\NN}$. Namely we show that for each $N\in\NN$ and $t\in[0,T]$
\begin{equation}\label{eqn:25b}
\lim_{A\to\iy}\sup_{n\in\NN}\PP\left(\sum_{j=0}^N|X^n_j(t)|>A\right)=0.
\end{equation}
Fix $\eps>0$. From Lemma \ref{lem:mombd}, there is a $M\in(0,\iy)$ such that 
\begin{equation}\label{eqn:epsM}
\sup_{n\in\NN}\E\left(\sup_{0\leq t\leq T}\sum_{j=0}^\iy j\pi_j^n(t)\right)\leq \frac{M\eps}{2}.
\end{equation}
Let $B_M^n\doteq\{\sup_{0\leq t\leq T}\sum_{j=0}^\iy j\pi^n_j(t)\leq M\}$. Then for $t\in[0,T]$ and $N\in\NN$
\begin{equation}\label{eqn:probaexpansion}
\begin{aligned}
\PP\left(\sum_{j=0}^N|X^n_j(t)|>A\right)
&\leq \PP\left(\sup_{0\leq t\leq T}\sum_{j=0}^\iy j\pi^n_j(t)>M\right)
	+\PP\left(\sum_{j=0}^N|X^n_j(t)|>A,B_M^n\right)\\
&\leq \frac{\eps}{2}+\PP\left(\sum_{j=0}^N|X^n_j(t)|>A,B_M^n\right).
\end{aligned}
\end{equation}
The Cauchy-Schwartz inequality yields
\begin{equation}\label{eqn:Xexpansion}
\sum_{i=0}^N|X^n_j(t)|
\leq \sqrt{N}\left(\sum_{j=0}^N|X^n_j(t)|^2\right)^{1/2}
\leq \sqrt{N}\|X^n(t)\|_2.
\end{equation}
Furthermore, from \eqref{eqn:semimartrep} and the triangle inequality,
\begin{equation}\label{eqn:54b}
\|X^n(t)\|_2
\leq \|X^n(0)\|_2+\|\bar{A}^n(t)\|_2+\|\bar{M}^n(t)\|_2.
\end{equation}
The definition of $\bar{A}^n$ in \eqref{eqn:Abardef} gives
\begin{equation*}
\|\bar{A}^n(t)\|_2=\sqrt{n}\left\|A^n(t)-\int_0^tF(\pi(s))ds\right\|_2.
\end{equation*}
The moment bound \eqref{eqn:probmzrtight} proved in Lemma \ref{lem:mombd} implies
\begin{equation}\label{eqn:firtMomSquaredBound}
\sup_{n\in\NN}\E\sup_{0\leq t\leq T}\left|\sum_{j=0}^\iy j\pi^n_j(t)\right|^2\doteq\kappa_7<\iy.
\end{equation}
and thus, for some $\kappa_8\in(0,\iy)$,
\begin{equation}\label{eqn:firstMomSquaredBoundLim}
\sup_{0\leq t\leq T}\left|\sum_{j=0}^\iy j\pi_j(t)\right|^2\leq\kappa_8
\end{equation}
as well.
From \eqref{eqn:AtoF} and the Lipschitz property proved in Lemma \ref{lem:lipschitz2}, with $M\geq \kappa_7\vee\kappa_8$ on the set $B^n_M$,
\begin{equation*}
\begin{aligned}
\|\bar{A}^n(t)\|_2
\leq \sqrt{n}\int_0^t\|F(\pi^n(s))-F(\pi(s))\|_2ds + \frac{\kappa_9}{\sqrt{n}}
\leq C(M)\int_0^t\|X^n(s)\|_2ds + \frac{\kappa_9}{\sqrt{n}}.
\end{aligned}
\end{equation*}
Thus, from \eqref{eqn:54b} and Gronwall's lemma, on the set $B_M^n$, for all $n\geq 1$
\begin{equation}\label{eqn:Xngronwall}
\sup_{0\leq t\leq T}\|X^n(t)\|_2\leq \kappa_{10}\left(\frac{1}{\sqrt{n}}+\|X^n(0)\|_2+\sup_{0\leq t\leq T}\|\bar{M}^n(t)\|_2\right)e^{C(M)T}.
\end{equation}
From \eqref{eqn:24.9} and Doob's inequality 
\begin{equation}\label{eqn:Mnunifsecmomentbound}
\sup_{n\in\NN}\E\sup_{0\leq t\leq T}\|\bar{M}^n(t)\|^2_2<\iy.
\end{equation}
Also by assumption, $X^n(0)\to x_0$ in $\bfell_2$. Thus for the given $\eps>0$, we can find $\alpha_0$ such that for all $\alpha\geq \alpha_0$
\begin{equation*}
\PP\left(\sup_{0\leq t\leq T}\|X^n(t)\|_2\geq \frac{\alpha}{\sqrt{N}}, B^n_M\right)\leq\frac{\eps}{2}.
\end{equation*}
Therefore from \eqref{eqn:probaexpansion} and \eqref{eqn:Xexpansion} we have that for all $A\geq \frac{\alpha_0}{\sqrt{N}}$,
\begin{equation*}
\sup_{n\in\NN}\PP\left(\sum_{j=0}^N|X^n_j(t)|>A\right)\leq \frac{\eps}{2}+\frac{\eps}{2}=\eps.
\end{equation*}
Since $\eps>0$ is arbitrary we get \eqref{eqn:25b}. Thus, we have verified part ($a$) of Theorem \ref{thm:AppHilbertTight} for $\{X^n(t)\}_{n\in\NN}$, for each $t\in[0,T]$. 

We next consider part ($b$) of Theorem \ref{thm:AppHilbertTight}.
Namely, we show that for every $\del>0$ and $t\in[0,T]$,
\begin{equation*}
\lim_{N\to\iy}\sup_{n\in\NN}\PP\left(\sum_{j=N}^\iy(X^n_j(t))^2>\del\right)=0.
\end{equation*}
For this it suffices to show that
\begin{equation}\label{eqn:XnsecondMoment}
\sup_{n\in\NN}\E\sup_{0\leq t\leq T}\sum_{j=0}^\iy j^2(X^n_j(t))^2<\iy.
\end{equation}
Recalling that $X^n_j(t) = X^n_j(0)+\bar{A}^n_j(t)+\bar{M}^n_j(t)$ for each $j\in\NN$, it follows that
\begin{equation}\label{eqn:XsecMomentExp}
\begin{aligned}
\E\sup_{0\leq t\leq T}\sum_{j=0}^\iy j^2(X^n_j(t))^2
&\leq 3\sup_{n\in\NN}\sum_{j=0}^\iy j^2(X^n_j(0))^2+3\sup_{n\in\NN}\E\sup_{0\leq t\leq T}\sum_{j=0}^\iy j^2(\bar{A}^n_j(t))^2\\
&\qquad+3\sup_{n\in\NN}\E\sup_{0\leq t\leq T}\sum_{j=0}^\iy j^2(\bar{M}^n_j(t))^2.
\end{aligned}
\end{equation}
Using the definitions of $\bar{A}^n$, $A^n$, and $F$ in \eqref{eqn:Abardef}, \eqref{eqn:AjzetaExp}, and \eqref{eqn:codingF}, respectively, we can write
\begin{equation}\label{eqn:AsecMoment1}
\begin{aligned}
(\bar{A}^n_j(t))^2
&\leq \kappa_{11}\Bigg\{\int_0^tn\left[\frac{(n-L)!}{n!}\zeta(j,n\pi^n(s))-\bar{\zeta}(j,\pi(s))\right]^2ds\\
&\qquad +\int_0^tn\left[\frac{(n-L)!}{n!}\zeta(j-1,n\pi^n(s))-\bar{\zeta}(j-1,\pi(s))\right]^2ds\\
&\qquad+ \int_0^tn\left[\pi^n_j(s)-\pi_j(s)\right]^2ds+\int_0^tn\left[\pi^n_{j+1}(s)-\pi_{j+1}(s)\right]^2ds\Bigg\}.
\end{aligned}
\end{equation}
From \eqref{eqn:zetatobar} and in a similar manner as in \eqref{eqn:zetatoR} we have
\begin{equation*}
\begin{aligned}
n\left[\frac{(n-L)!}{n!}\zeta(j,n\pi^n(s))-\bar{\zeta}(j,\pi(s))\right]^2
&\leq \kappa_{12}\left\{(\pi^n_j(s))^2+n\left[\bar{\zeta}(j,\pi^n(s))-\bar{\zeta}(j,\pi(s))\right]^2\right\}\\
&\leq \kappa_{13}\left\{(\pi^n_j(s))^2+n\sum_{i_1=0}^{k-1}\sum_{i_2=1}^{L-i_1}R_{j,i_1,i_2}(\pi^n(s),\pi(s))^2\right\},
\end{aligned}
\end{equation*}
where $R_{j,i_1,i_2}$ is as in \eqref{eqn:Rdef}.
By \eqref{eqn:Rineq2} and the Cauchy Schwartz inequality we now have, 
\begin{equation*}
\begin{aligned}
nR_{j,i_1,i_2}(\pi^n(s),\pi(s))^2
&\leq \kappa_{14}\left[(X^n_j(s))^2+\pi_j(s)\left(\sum_{m=0}^\iy|X^n_m(s)|\right)^2\right]\\
&\leq \kappa_{14}\left[(X^n_j(s))^2+\pi_j(s)\left(\sum_{m=0}^\iy\frac{1}{m^2}\right)\sum_{m=0}^\iy m^2(X^n_m(s))^2\right].
\end{aligned}
\end{equation*}
Therefore
\begin{equation*}
\begin{aligned}
&n\left[\frac{(n-L)!}{n!}\zeta(j,n\pi^n(s))-\bar{\zeta}(j,\pi(s))\right]^2\\
&\qquad\leq \kappa_{15}\left\{(\pi^n_j(s))^2+(X^n_j(s))^2+\pi_j(s)\sum_{m=0}^\iy m^2(X^n_m(s))^2\right\}.
\end{aligned}
\end{equation*}
Combining this estimate with \eqref{eqn:AsecMoment1} and \eqref{eqn:piSecMoment3} yields
\begin{equation}\label{eqn:AsecMoment2}
\begin{aligned}
\E\sum_{j=0}^\iy j^2(\bar{A}^n_j(t))^2
&\leq \kappa_{16}\E\left\{\int_0^t\sum_{j=1}^\iy j^2\left[(X^n_{j-1}(s))^2+(X^n_j(s))^2+(X^n_{j+1}(s))^2\right.\right.\\
&\qquad\left.\left.+(\pi_j(s)+\pi_{j-1}(s))\sum_{m=0}^\iy m^2(X^n_m(s))^2\right]ds\right\}+\kappa_{16}\\
&\leq \kappa_{17}\E\int_0^t\left(1+\sum_{j=1}^\iy j^2\pi_j(s)\right)\left(\sum_{j=1}^\iy j^2(X^n_j(s))^2\right)ds+\kappa_{17}.
\end{aligned}
\end{equation}
Additionally, it follows from \eqref{eqn:tripleStar} and \eqref{eqn:secmomentbound} that,
\begin{equation*}
\E\sup_{0\leq t\leq T}\sum_{j=0}^\iy j^2\bar{M}_j^n(t)^2
\leq \kappa_{18}'\E\sum_{j=0}^\iy j^2\lan\bar{M}_j^n\ran_T
\leq \kappa_{18}\int_0^T\left[1+\E\sum_{j=0}^\iy j^2\pi_j^n(s)\right]ds
\leq \kappa_{19}.
\end{equation*}
Therefore, from \eqref{eqn:XinitBound}, \eqref{eqn:XsecMomentExp}, and \eqref{eqn:AsecMoment2}, for all $t\in[0,T]$,
\begin{equation*}
\E\sup_{0\leq t\leq T}\sum_{j=0}^\iy j^2(X^n_j(t))^2
\leq \kappa_{20}+ \kappa_{20}\int_0^T\left(1+\sum_{j=1}^\iy j^2\pi_j(t)\right)\E\sup_{0\leq s\leq t}\left(\sum_{j=1}^\iy j^2(X^n_j(s))^2\right)dt.
\end{equation*}
From \eqref{eqn:secmomentbound} and Fatou's lemma, $\int_0^T\sum_{j=1}^\iy j^2\pi_j(s)ds<\iy$ and thus by Gronwall's lemma
\begin{equation*}
\sup_{n\in\NN}\E\sup_{0\leq t\leq T}\sum_{j=0}^\iy j^2(X^n_j(t))^2
\leq \kappa_{19} e^{\kappa_{20}\int_0^T\left(1+\sum_{j=1}^\iy j^2\pi_j(s)\right)}
<\iy.
\end{equation*}
This proves \eqref{eqn:XnsecondMoment}  and verifies part ($b$) of Theorem \ref{thm:AppHilbertTight} for $\{X^n(t)\}_{n\in\NN}$ for each $t\in[0,T]$. Thus $\{X^n(t)\}_{n\in\NN}$ is a tight sequence of $\bfell_2$-valued random variables for every $t\in[0,T]$.

We now show that condition $(A)$ of Theorem \ref{thm:AppSemiMartTight} holds for $\{X^n\}_{n\in\NN}$. Since $X^n(t)=X^n(0)+\bar{A}^n(t)+\bar{M}^n(t)$ and we have shown the condition is satisfied by $\{\bar{M}^n\}_{n\in\NN}$, it suffices to show that the condition holds for $\{\bar{A}^n\}_{n\in\NN}$. Let $N,\eta,\eps,\theta>0,\ N\leq T-\theta,$ and suppose $\{\tau_n\}_{n\in\NN}$ is a family of stopping times such that $\tau_n\leq N$. From the definition of $\bar{A}^n$ (cf. \eqref{eqn:Abardef}) and \eqref{eqn:AtoF} we have that
\begin{equation}\label{eqn:AbarFluc}
\left\|\bar{A}^n(\tau_n+\theta)-\bar{A}^n(\tau_n)\right\|_2
\leq \int_{\tau_n}^{\tau_n+\theta}\sqrt{n}\left\|F(\pi^n(t))-F(\pi(t))\right\|_2dt+\frac{\kappa_{21}}{\sqrt{n}}
\end{equation}
where $\kappa_{21}$ is independent of the choice of $\tau_n$ and $N$. Fix $n_0\in\NN$ such that $\eta -\frac{\kappa_{21}}{\sqrt{n_0}}>0$ and let $\eta' = \eta -\frac{\kappa_{21}}{\sqrt{n_0}}$. 
Recall $\kappa_7,\kappa_8$ introduced in \eqref{eqn:firtMomSquaredBound} and \eqref{eqn:firstMomSquaredBoundLim}, and $B_M^n$ introduced below \eqref{eqn:epsM}.
Select $M\in(0,\iy)$ large enough that $M>\kappa_7\vee\kappa_8$ and \eqref{eqn:epsM} holds. Then for all $n\geq n_0$,
\begin{equation}\label{eqn:Xnfluctuation1}
\begin{aligned}
&\PP \left\{\left\|\int_{\tau_n}^{\tau_n+\theta}\sqrt{n}[F(\pi^n(t))-F(\pi(t))]dt\right\|_2>\eta'\right\}\\
&\qquad\leq \PP \left\{\left\|\int_{\tau_n}^{\tau_n+\theta}\sqrt{n}[F(\pi^n(t))-F(\pi(t))]dt\right\|_2>\eta',B_M^n\right\}+\PP \left\{\sup_{0\leq t\leq T}\sum_{j=0}^\iy j\pi_j^n(t)>M\right\}\\
&\qquad\leq \PP \left\{\left\|\int_{\tau_n}^{\tau_n+\theta}\sqrt{n}[F(\pi^n(t))-F(\pi(t))]dt\right\|_2>\eta',B_M^n\right\}+\frac{\eps}{2}.
\end{aligned}
\end{equation}
It follows from the Lipschitz property of $F$ proved in Lemma \ref{lem:lipschitz2} that
\begin{equation}\label{eqn:Xnfluctuation2}
\PP \left\{\int_{\tau_n}^{\tau_n+\theta}\sqrt{n}\left\|F(\pi^n(t))-F(\pi(t))]\right\|_2dt>\eta',B_M^n\right\}\leq \PP \left\{C(M)\int_{\tau_n}^{\tau_n+\theta}\left\|X^n(t)\right\|_2dt>\eta',B_M^n\right\}.
\end{equation}
Recall from \eqref{eqn:Xngronwall} that for some $\ti C(M)\in(0,\iy)$ on the set $B_M^n$
\begin{equation*}
C(M)\sup_{0\leq t\leq T}\|X^n(t)\|_2\leq \ti C(M)(1+\sup_{0\leq t\leq T}\|\bar{M}^n(t)\|_2).
\end{equation*}
Thus from \eqref{eqn:Xnfluctuation2}, Markov's inequality, and \eqref{eqn:Mnunifsecmomentbound} we have
\begin{equation}\label{eqn:Xnfluc5}
\begin{aligned}
\PP \left\{\int_{\tau_n}^{\tau_n+\theta}\sqrt{n}\left\|F(\pi^n(t))-F(\pi(t))]\right\|_2dt>\eta',B_M^n\right\}
&\leq \PP\{\theta\ti C(M)(1+\sup_{0\leq t\leq T}\|\bar{M}^n(t)\|_2)>\eta'\}\\
&\leq \frac{\theta \ti C(M)(1+\E\sup_{0\leq t\leq T}\|\bar{M}^n(t)\|_2)}{\eta'}\\
&\leq \theta\ti C(M)\kappa_{22}
\end{aligned}
\end{equation}
Combining \eqref{eqn:Xnfluctuation1} and \eqref{eqn:Xnfluc5} gives, whenever $\theta\leq \del$,
\begin{equation*}
\sup_{0\leq\theta\leq\del}\PP \left\{\left\|\int_{\tau_n}^{\tau_n+\theta}\sqrt{n}[F(\pi^n(t))-F(\pi(t))]dt\right\|_2>\eta'\right\}
\leq C(M)\kappa_{22}\del+ \frac{\eps}{2}.
\end{equation*}
Selecting $\del$ small enough that the first term on the RHS is less than $\eps/2$ we have,
\begin{equation}\label{eqn:Xnfluc4}
\sup_{0\leq\theta\leq\del}\PP \left\{\left\|\int_{\tau_n}^{\tau_n+\theta}\sqrt{n}[F(\pi^n(t))-F(\pi(t))]dt\right\|_2>\eta'\right\}
\leq \frac{\eps}{2}+\frac{\eps}{2}
=\eps.
\end{equation}
Therefore, combining \eqref{eqn:AbarFluc} and \eqref{eqn:Xnfluc4}, gives
\begin{equation*}
\sup_{n\geq n_0}\sup_{0\leq\theta\leq\del}\PP\left\{\left\|\bar{A}^n(\tau_n+\theta)-\bar{A}^n(\tau_n)\right\|_2>\eta\right\}
\leq\eps
\end{equation*}
which shows that condition $(A)$ of Theorem \ref{thm:AppSemiMartTight} is satisfied for $\{\bar{A}^n\}_{n\in\NN}$. Therefore, as discussed earlier, $\{X^n\}_{n\in\NN}$ is a tight sequence of $\DD([0,T]:\bfell_2)$-valued random variables and thus $\{(X^n,\bar{M}^n)\}_{n\in\NN}$ is a tight sequence of $\DD([0,t]:(\bfell_2)^2)$-valued random variables.

Finally,  the $\CC$-tightness of $\{(X^n,\bar{M}^n)\}_{n\in\NN}$ is immediate from the estimate
\begin{equation*}
j_T(X^n)
= j_T(\bar{M}^n)
\leq \frac{2+2k}{\sqrt{n}},\qquad n\in\NN
\end{equation*} 
which follows as in the proof of Proposition \ref{prop:tightness}.
\end{proof}

\subsection{Convergence}\label{sec:diffConv}

In this section we give the proofs of Proposition \ref{prop:DiffUniqueness} and Theorem \ref{thm:mainResult}. Since we have shown tightness of $\{(X^n,\bar{M}^n)\}_{n\in\NN}$ in Section \ref{sec:diffTightness}, all that remains in order to complete the proof of Theorem \ref{thm:mainResult} is to characterize the weak limit points of this sequence of processes. This will be argued by showing that the limit point of any weakly convergent subsequence of $\{X^n\}_{n\in\NN}$ will be a solution to the SDE \eqref{eqn:limitSDE} and that uniqueness holds for \eqref{eqn:limitSDE} in an appropriate class, which will also prove Proposition \ref{prop:DiffUniqueness}. We begin by establishing a uniform integrability property for the sequence $\{\bar{M}^n\}_{n\in\NN}$.
\begin{lemma}\label{lem:UIMBar}
Suppose  $\{\pi^n\}_{n\in\NN}$ satisfies conditions in Proposition \ref{prop:diffTightness}. Then the  sequence $\{\sup_{0\leq t\leq T}\sum_{j=0}^\iy|\bar{M}_j^n(t)|^2\}_{n\in\NN}$ is uniformly integrable.
\end{lemma}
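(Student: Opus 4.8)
The plan is to establish the strictly stronger bound $\sup_{n\in\NN}\E\sup_{0\le t\le T}\|\bar M^n(t)\|_2^4<\infty$. Since $\sum_{j=0}^\iy|\bar M^n_j(t)|^2=\|\bar M^n(t)\|_2^2$, this shows that the family $\{\sup_{0\le t\le T}\|\bar M^n(t)\|_2^2\}_{n\in\NN}$ is bounded in $L^2(\PP)$ and hence uniformly integrable (since $Y_n 1_{\{Y_n>K\}}\le K^{-1}Y_n^2$ for $Y_n=\sup_{0\le t\le T}\|\bar M^n(t)\|_2^2$). I would stress at the outset that the $L^1$ estimates already recorded in Section \ref{sec:diffTightness}, in particular \eqref{eqn:Mnunifsecmomentbound}, only give $L^1$-boundedness of $\sup_{0\le t\le T}\|\bar M^n(t)\|_2^2$, which is not enough; the gain of a strictly higher moment comes precisely from the fact that the jumps of $\bar M^n$ are of order $1/n$ while its angle bracket stays bounded.

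First I would recall from \eqref{eqn:MartRep} that $\bar M^n=\sqrt n M^n$ is a purely discontinuous $\{\clf^n_t\}$-martingale whose jumps are either $\frac1{\sqrt n}\Del_\ell$ (at a job arrival selecting the configuration $\ell$) or $\frac1{\sqrt n}(e_{i-1}-e_i)$ (at a completion from a queue of length $i$); since $\|\Del_\ell\|_2^2$ and $\|e_{i-1}-e_i\|_2^2$ are bounded by a constant depending only on $k$ and $L$, every jump satisfies $\|\Delta\bar M^n(s)\|_2^2\le\kappa_0/n$. Next, exactly as in the derivation of \eqref{eqn:quadMRate} (using $\sum_{j}Z(j,n\pi^n(s))\le 2c_Z n^L$ from \eqref{eqn:Zineq} and $\sum_j[\pi^n_{j+1}(s)+\pi^n_j(s)]\le 2$), the trace of the predictable quadratic variation obeys the \emph{deterministic} bound $\Tr\langle\bar M^n\rangle_T=n\sum_{j\ge 0}\langle M^n_j\rangle(T)\le\kappa_1$, with $\kappa_1$ independent of $n$; in particular each $\bar M^n$ is a genuine square-integrable martingale, so $\E\,\Tr[\bar M^n]_T=\E\,\Tr\langle\bar M^n\rangle_T\le\kappa_1$, where $\Tr[\bar M^n]_t=\sum_{s\le t}\|\Delta\bar M^n(s)\|_2^2$ denotes the trace of the optional quadratic variation.

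The main step is to bound $\E(\Tr[\bar M^n]_T)^2$ uniformly in $n$. Writing $V^n:=\Tr[\bar M^n]$ and $U^n:=V^n-\Tr\langle\bar M^n\rangle$, which is a square-integrable martingale with $U^n_0=0$, the fact that the jumps of $V^n$ are $\|\Delta\bar M^n(s)\|_2^2\le\kappa_0/n$ gives the pathwise bound $[U^n]_T=\sum_{s\le T}\|\Delta\bar M^n(s)\|_2^4\le(\kappa_0/n)V^n_T$, whence $\E(U^n_T)^2=\E[U^n]_T\le(\kappa_0/n)\E V^n_T\le\kappa_0\kappa_1/n$ (after the routine localization and Fatou argument). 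Combining this with $\Tr\langle\bar M^n\rangle_T\le\kappa_1$ yields $\sup_{n\in\NN}\E(\Tr[\bar M^n]_T)^2\le 2\kappa_0\kappa_1+2\kappa_1^2<\infty$. Finally, the Burkholder--Davis--Gundy inequality for Hilbert space-valued martingales (cf. \cite{metivier1982semimartingales}) yields $\E\sup_{0\le t\le T}\|\bar M^n(t)\|_2^4\le C\,\E(\Tr[\bar M^n]_T)^2$, which is bounded uniformly in $n$, and the uniform integrability claim follows.

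The main obstacle is conceptual rather than computational: $L^1$-boundedness of $\sup_t\|\bar M^n(t)\|_2^2$ does not imply uniform integrability, so a strictly higher moment must be produced, and the key is to notice that the $O(1/n)$ size of the jumps allows the extra power of $\Tr[\bar M^n]_T$ to be absorbed at the cost of a harmless factor $1/n$ while $\Tr\langle\bar M^n\rangle_T$ stays bounded. The one technical point to verify carefully is that $\bar M^n$ is a true (not merely local) martingale, so that the identities $\E(U^n_T)^2=\E[U^n]_T$, $\E\,\Tr[\bar M^n]_T=\E\,\Tr\langle\bar M^n\rangle_T$, and the Hilbert-space BDG inequality all apply; this is exactly what the deterministic bound on $\Tr\langle\bar M^n\rangle_T$ provides.
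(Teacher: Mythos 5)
Your proof is correct but takes a genuinely different route from the paper. The paper begins with a weighted Cauchy--Schwarz step, $\bigl(\sum_j|\bar M^n_j|^2\bigr)^2\le\bigl(\sum_m m^{-2}\bigr)\sum_j j^2|\bar M^n_j|^4$, applies the scalar Burkholder--Davis--Gundy inequality coordinatewise, and then closes the resulting bound on $\sum_j j^2\E[\bar M^n_j](T)^2$ by expanding $[\bar M^n_j]$ through the Poisson representation and invoking \eqref{eqn:secmomentbound}; the second-moment hypotheses of Proposition \ref{prop:diffTightness} are therefore genuinely used there. Your argument instead rests on two structural facts that hold without any moment conditions on $\pi^n(0)$: $\Tr\langle\bar M^n\rangle_T$ is deterministically bounded (because $\sum_j\pi^n_j\equiv 1$ and $n^L/\binom{n}{L}$ is bounded in $n$), and each jump of $\bar M^n$ has squared $\bfell_2$-norm $O(1/n)$. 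These combine, via the compensated increasing process $U^n=\Tr[\bar M^n]-\Tr\langle\bar M^n\rangle$ and a single application of the Hilbert-space BDG inequality, to give $\sup_n\E\sup_t\|\bar M^n(t)\|_2^4<\infty$. Both routes establish the same strengthened $L^2$-bound on $\sup_t\|\bar M^n(t)\|_2^2$; yours is more economical in that it bypasses \eqref{eqn:secmomentbound} entirely, at the modest price of invoking the vector-valued form of BDG and a few standard facts about compensators of $\bfell_2$-valued pure-jump martingales (that $U^n$ is a true square-integrable martingale, that $\E(U^n_T)^2=\E[U^n]_T$, and that $\Delta U^n(s)=\|\Delta\bar M^n(s)\|_2^2$). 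You flag these at the end; the deterministic bound on $\Tr\langle\bar M^n\rangle_T$ and the resulting finiteness of $\E\Tr[\bar M^n]_T$ are indeed precisely what justify the localization-and-Fatou step, so the argument is complete as written.
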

\begin{proof}
It follows from the Cauchy-Schwartz and Burkholder-Davis-Gundy inequalities that
\begin{equation}\label{eqn:mbarnUI1}
\sup_{n\in\NN}\E\sup_{0\leq t\leq T}\left(\sum_{j=0}^\iy|\bar{M}_j^n(t)|^2\right)^2
\leq \sup_{n\in\NN}\left(\sum_{m=0}^\iy\frac{1}{m^2}\right)\sum_{j=0}^\iy\E\sup_{0\leq t\leq T}j^2|\bar{M}_j^n(t)|^4
\leq \kappa_1\sup_{n\in\NN}\sum_{j=0}^\iy j^2\E[\bar{M}_j^n](T)^2.
\end{equation}
Recalling the definition of $M^n$ from \eqref{eqn:MartRep}, for each $j$, $\E[\bar{M}_j^n](T)^2$ can be written as
\begin{equation*}
\begin{aligned}
\E[\bar{M}_j^n](T)^2
&=\E\left\{\sum_{\ell\in\Sigma}\frac{1}{n}\lan e_j,\Del_\ell\Del_\ell^Te_j\ran_2 N_\ell\left(\frac{n\lambda}{\binom{n}{L}}\int_0^T\prod_{i=0}^\iy \binom{n\pi^n_i(s)}{\rho_i(\ell)}ds\right)\right.\\
&\qquad\left.+ \frac{1}{n}\left[D_j\left(k\int_0^Tn\pi^n_j(s)ds\right)+D_{j+1}\left(k\int_0^Tn\pi^n_{j+1}(s)ds\right)\right]\right\}^2
\end{aligned}
\end{equation*}
The first term in the expectation on the RHS of the above equation corresponds to the stream of incoming jobs assigned to queues of length $j$. 
From \eqref{eqn:incoming1}, \eqref{eqn:innerEval}, \eqref{eqn:Zineq}, and the independence of $N_\ell,N_{\ell'}$ for $\ell\neq\ell'$ we have
\begin{equation*}
\begin{aligned}
&\E\left(\sum_{\ell\in\Sigma}\frac{1}{n}\lan e_j,\Del_\ell\Del_\ell^Te_j\ran_2 N_\ell\left(\frac{n\lambda}{\binom{n}{L}}\int_0^T\prod_{i=0}^\iy \binom{n\pi^n_i(s)}{\rho_i(\ell)}ds\right)\right)^2\leq \kappa_2\E\int_0^T[\pi^n_j(s)+\pi^n_{j-1}(s)]ds.
\end{aligned}
\end{equation*}
Similarly,
\begin{equation*}
\E\left(\frac{1}{n}\left[D_j\left(k\int_0^Tn\pi^n_j(s)ds\right)+D_{j+1}\left(k\int_0^Tn\pi^n_{j+1}(s)ds\right)\right]\right)^2
\leq \kappa_3\E\int_0^T[\pi^n_j(s)+\pi^n_{j+1}(s)]ds.
\end{equation*}
Combining these estimates and using \eqref{eqn:secmomentbound}
\begin{equation*}
\begin{aligned}
\sup_{n\in\NN}\sum_{j=0}^\iy j^2E[\bar{M}_j^n](T)^2
\leq\kappa_4\sup_{n\in\NN}\E\int_0^T\sum_{j=1}^\iy j^2(\pi_{j-1}^n(s)+\pi_{j}^n(s)+\pi_{j+1}^n(s))
<\iy
\end{aligned}
\end{equation*}
which, in view of \eqref{eqn:mbarnUI1}, gives the desired uniform integrability.
\end{proof}

The following lemma together with \eqref{eqn:XnsecondMoment} shows that any weak limit point $X$ of $\{X^n\}_{n\in\NN}$ satisfies $X(t)\in\ti\bfell_2$ for all $t\in[0,T]$, a.s.
\begin{lemma}\label{lem:tibfell2X}
Let $z^n,z$ be $\DD([0,T]:\bfell_2)$-valued random variables such that 
\begin{equation*}
\sup_{0\leq t\leq T}\|z^n(t)-z(t)\|_2\to0\text{ in probability as }n\to\iy.
\end{equation*}
Suppose that $\sup_{n\in\NN}\E\sup_{0\leq t\leq T}\sum_{j=0}^\iy j^2(z_j^n(t))^2<\iy$. Then $\sup_{0\leq t\leq T}\sum_{j=0}^\iy j^2(z_j(t))^2<\iy$ almost surely and $\sup_{0\leq t\leq T}\left|\sum_{j=0}^\iy z_j^n(t)-\sum_{j=0}^\iy z_j(t)\right|\to0$ in probability.
\end{lemma}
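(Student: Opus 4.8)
The plan is to combine a subsequence argument with Fatou's lemma to transfer the uniform second moment bound from the $z^n$ onto $z$, and then to control $\sum_j z^n_j(t)-\sum_j z_j(t)$ by splitting it into a finite ``low frequency'' part, handled by the $\bfell_2$-convergence hypothesis, and an infinite ``tail'' part, handled uniformly in $n$ by the second moment bound together with the weighted Cauchy--Schwarz estimate $\sum_{j\ge K}|x_j|\le(\sum_{j\ge K}j^{-2})^{1/2}(\sum_{j\ge K}j^2x_j^2)^{1/2}$. Throughout, set $C\doteq\sup_{n\in\NN}\E\sup_{0\le t\le T}\sum_{j=0}^\iy j^2(z^n_j(t))^2<\iy$, and note that all suprema over $[0,T]$ are well defined in view of the RCLL property of the paths.

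For the first assertion, I would first pass to a subsequence $\{n_m\}$ along which $\sup_{0\le t\le T}\|z^{n_m}(t)-z(t)\|_2\to0$ almost surely; on the corresponding full-measure event one has $z^{n_m}_j(t)\to z_j(t)$ for every $t$ and $j$, so for each fixed $K$ the finite sum $\sum_{j=0}^K j^2(z^{n_m}_j(t))^2$ converges to $\sum_{j=0}^K j^2(z_j(t))^2$ pointwise in $t$. Bounding each of these finite sums by $\sup_{0\le s\le T}\sum_{j=0}^\iy j^2(z^{n_m}_j(s))^2$ and passing to $\liminf_m$ gives $\sup_{0\le t\le T}\sum_{j=0}^K j^2(z_j(t))^2\le\liminf_m\sup_{0\le s\le T}\sum_{j=0}^\iy j^2(z^{n_m}_j(s))^2$; taking expectations and invoking Fatou's lemma yields $\E\sup_{0\le t\le T}\sum_{j=0}^K j^2(z_j(t))^2\le C$ for every $K$, and monotone convergence as $K\to\iy$ then gives $\E\sup_{0\le t\le T}\sum_{j=0}^\iy j^2(z_j(t))^2\le C<\iy$, hence $\sup_{0\le t\le T}\sum_{j=0}^\iy j^2(z_j(t))^2<\iy$ almost surely.

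For the second assertion, the weighted Cauchy--Schwarz inequality first shows that $z^n(t)$ and $z(t)$ lie in $\bfell_1$ for a.e. $\omega$ and all $t$ (using the second moment bound and the first assertion, respectively), so the coordinate sums are well defined. Then, for $K\in\NN$, I would split $\sum_{j}(z^n_j(t)-z_j(t))=\sum_{j=0}^K(z^n_j(t)-z_j(t))+\sum_{j>K}(z^n_j(t)-z_j(t))$. For the tail,
\[
\sup_{0\le t\le T}\Big|\sum_{j>K}(z^n_j(t)-z_j(t))\Big|\le\Big(\sum_{j>K}\tfrac{1}{j^2}\Big)^{1/2}\Big[\sup_{0\le t\le T}\Big(\sum_{j}j^2(z^n_j(t))^2\Big)^{1/2}+\sup_{0\le t\le T}\Big(\sum_{j}j^2(z_j(t))^2\Big)^{1/2}\Big],
\]
whose $L^1$ norm is at most $2C^{1/2}(\sum_{j>K}j^{-2})^{1/2}$, uniformly in $n$; by Markov's inequality this probability can be made arbitrarily small by choosing $K$ large, uniformly in $n$. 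For that fixed $K$, $\sup_{0\le t\le T}|\sum_{j=0}^K(z^n_j(t)-z_j(t))|\le\sqrt{K+1}\,\sup_{0\le t\le T}\|z^n(t)-z(t)\|_2\to0$ in probability. Combining the two pieces gives $\sup_{0\le t\le T}|\sum_{j}z^n_j(t)-\sum_{j}z_j(t)|\to0$ in probability.

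The only genuinely substantive point, and the place I expect a reader might hesitate, is that neither $z^n(t)$ nor $z(t)$ is a priori in $\bfell_1$, so the infinite sums $\sum_j z^n_j(t)$ and $\sum_j z_j(t)$ must first be shown to be absolutely convergent; both this well-definedness and the uniform tail control needed in the last step are delivered by the same weighted Cauchy--Schwarz bound against $\sum_j j^2(\cdot)_j^2$, so once the second moment bound is in hand I expect no serious obstacle.
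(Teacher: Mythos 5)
Your proposal is correct and follows essentially the same route as the paper: Fatou's lemma (via an a.s.-convergent subsequence) to transfer the weighted second-moment bound from the $z^n$ to $z$, then the split into a finite head (controlled by the $\boldsymbol{\ell}_2$-convergence hypothesis) and an infinite tail (controlled uniformly in $n$ by the weighted Cauchy--Schwarz bound against $\sum_j j^2(\cdot)_j^2$). The only cosmetic differences are that you spell out the truncation-to-$K$ and monotone-convergence step that the paper leaves implicit in its ``by Fatou'' remark, you keep the tail as a single term $\sum_{j>K}(z^n_j-z_j)$ where the paper separates it into two, and you finish via Markov's inequality where the paper uses the $\wedge\,1$ truncation.
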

\begin{proof}
Let $\kappa= \sup_{n\in\NN}\E\sup_{0\leq t\leq T}\sum_{j=0}^\iy j^2[z^n_j(t)]^2$. Note that
\begin{equation*}
\sup_{n\in\NN}\E\sup_{0\leq t\leq T}\sum_{j=0}^\iy |z_j^n(t)|
\leq \left(\sum_{j=1}^\iy\frac{1}{j^2}\right)^{1/2}\sqrt{\kappa}
<\iy.
\end{equation*}
Also, by Fatou's lemma $\E\sup_{0\leq t\leq T}\sum_{j=0}^\iy j^2(z_j(t))^2\leq\kappa$ and so we have $\sup_{0\leq t\leq T}\sum_{j=0}^\iy |z_j(t)|<\iy$ almost surely as well. Now
\begin{equation*}
\begin{aligned}
&\E\left[\sup_{0\leq t\leq T}\left|\sum_{j=0}^\iy z^n_j(t)-\sum_{j=0}^\iy z_j(t)\right|\wedge 1\right]\\
&\qquad\leq\E\left[\sup_{0\leq t\leq T}\left|\sum_{j=0}^m z^n_j(t)-\sum_{j=0}^m z_j(t)\right|\wedge 1\right]+ \E\left[\sup_{0\leq t\leq T}\left|\sum_{j=m+1}^\iy z^n_j(t)\right|\wedge 1\right]+\E\left[\sup_{0\leq t\leq T}\left|\sum_{j=m+1}^\iy z_j(t)\right|\wedge 1\right]\\
&\qquad\equiv T_1^m(n)+T_2^m(n)+T_3^m(n).
\end{aligned}
\end{equation*}
Then for $\kappa_1\in(0,\iy)$
\begin{equation*}
(T_2^m(n))^2\leq \left(\sum_{j=m+1}^\iy\frac{1}{j^2}\right)\kappa_1\quad\text{ and }\quad(T_3^m(n))^2\leq \left(\sum_{j=m+1}^\iy\frac{1}{j^2}\right)\kappa_1.
\end{equation*}
The result now follows on first sending $n\to\iy$ and then $m\to\iy$.
\end{proof}

The following result that shows that $\Phi(t)$ is a trace class operator will be useful in characterizing the martingale term in the limiting diffusion.
Note that, from the definition \eqref{eqn:squarematrix}, $\Phi(t)$ is a non-negative operator.
\begin{lemma}\label{lem:traceOp}
For each $t\in[0,T]$, $\Phi(t)$ is a non-negative trace class operator. Denote by $a(t)$ the non-negative square root of $\Phi(t)$. Then $\int_0^T\|a(s)\|_{\hs}^2ds<\iy$.
\end{lemma}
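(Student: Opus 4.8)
The plan is to compute $\Tr(\Phi(t))$ explicitly and bound it uniformly in $t\in[0,T]$; this at once shows that $\Phi(t)$ is a well-defined non-negative trace class operator and that its non-negative square root $a(t)$ is Hilbert--Schmidt with a uniformly bounded norm, and integrating over $[0,T]$ then yields the final claim. Non-negativity of $\Phi(t)$ is immediate: every summand in \eqref{eqn:squarematrix} is of the form $c\,vv^T$ with $c\ge 0$ and $v\in\bfell_2$ (with $v=\Del_\ell$ or $v=e_{i-1}-e_i$), hence a non-negative operator, and so is every partial sum; the limit inherits this once its existence is established.

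For the trace I would first record the elementary fact that for a convergent sum $A=\sum_m c_m v_m v_m^T$ of non-negative rank-one operators one has, by Tonelli,
$$\Tr(A)=\sum_{j=0}^\iy\lan A e_j,e_j\ran_2=\sum_m c_m\|v_m\|_2^2 .$$
Applied to the second family in \eqref{eqn:squarematrix}, since $\|e_{i-1}-e_i\|_2^2=2$ for $i\ge 1$, this contributes $2k\sum_{i\ge 1}\pi_i(t)\le 2k$. For the first family, the triangle inequality together with \eqref{eqn:deldef} gives $\|\Del_\ell\|_2\le\sum_{i=1}^k\|e_{\ell_i+1}\|_2+\sum_{i=1}^k\|e_{\ell_i}\|_2=2k$, hence $\|\Del_\ell\|_2^2\le 4k^2$, for every $\ell\in\Sigma$. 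The key step is the resummation over configurations: identifying each $\ell\in\Sigma$ with its multiplicity vector $(\rho_i(\ell))_{i\ge 0}$, which ranges over all sequences of non-negative integers with $\sum_i\rho_i=L$, the multinomial theorem and the fact that $\pi(t)\in\cls$ give
$$\sum_{\ell\in\Sigma}\prod_{i=0}^\iy\frac{\pi_i(t)^{\rho_i(\ell)}}{\rho_i(\ell)!}=\frac{1}{L!}\Bigl(\sum_{i=0}^\iy\pi_i(t)\Bigr)^L=\frac{1}{L!} .$$
Thus the first family contributes at most $\lambda L!\cdot 4k^2\cdot\frac{1}{L!}=4\lambda k^2$, and altogether $\Tr(\Phi(t))\le 4\lambda k^2+2k$, a bound independent of $t$. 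Since the partial sums of \eqref{eqn:squarematrix} form an increasing sequence of non-negative operators with trace bounded by this constant, they converge in trace norm, so $\Phi(t)$ is indeed a non-negative trace class operator with $\Tr(\Phi(t))\le 4\lambda k^2+2k$.

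It then remains to note that a non-negative self-adjoint trace class operator has a unique non-negative self-adjoint square root $a(t)$, which is Hilbert--Schmidt with $\|a(t)\|_{\hs}^2=\Tr(a(t)^*a(t))=\Tr(a(t)^2)=\Tr(\Phi(t))\le 4\lambda k^2+2k$; and that $s\mapsto\Tr(\Phi(s))$ is measurable — in fact continuous, by the displayed formulas and the continuity of $\pi\in\CC([0,T]:\cls)$ — so that $\int_0^T\|a(s)\|_{\hs}^2\,ds=\int_0^T\Tr(\Phi(s))\,ds\le(4\lambda k^2+2k)\,T<\iy$. The only point that is not entirely routine is the multinomial resummation over $\Sigma$ together with the bookkeeping that identifies the trace-norm limit of the partial sums as $\Phi(t)$; everything else is standard Hilbert-space operator theory.
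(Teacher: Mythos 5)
Your proof is correct, and it takes a genuinely different route from the paper's. The paper expands the diagonal matrix elements $\lan e_j, \Phi(s)e_j\ran_2 = \lambda L!\,\bar Z(j,\pi(s)) + k(\pi_j(s)+\pi_{j+1}(s))$ via the combinatorial function $\bar Z$ (defined in its \eqref{eqn:ZjDef}), bounds each term by $c_{\bar Z}(\pi_{j-1}(s)+\pi_j(s))$ in the spirit of \eqref{eqn:zetaineq}, and then sums over $j$. You instead compute the total trace directly: using $\Tr(vv^T)=\|v\|_2^2$, the crude bound $\|\Del_\ell\|_2^2\le 4k^2$, and the multinomial resummation
$\sum_{\ell\in\Sigma}\prod_i \pi_i(t)^{\rho_i(\ell)}/\rho_i(\ell)! = \tfrac{1}{L!}\bigl(\sum_i\pi_i(t)\bigr)^L = \tfrac{1}{L!}$,
you get the explicit uniform constant $\Tr(\Phi(t))\le 4\lambda k^2 + 2k$ in one stroke. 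This is cleaner and more self-contained for the trace-class and Hilbert--Schmidt claims, since it avoids the $\bar Z$ bookkeeping entirely. One thing worth noting is that the paper's per-$j$ expansion of $\lan e_j,\Phi(s)e_j\ran_2$ is not wasted effort: it is re-used in the convergence step of Theorem \ref{thm:mainResult} to identify the quadratic covariation of the limiting martingale, so the paper earns its more laborious computation elsewhere. Your argument, taken on its own terms, proves the lemma; the only mildly non-routine point you rightly flag — that the increasing partial sums of a series of non-negative rank-one operators with uniformly bounded trace converge in trace norm to a non-negative trace class limit — is standard and correctly invoked.
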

\begin{proof}
We first show that $\Phi(t)$ is a trace class operator. 
Since $\Phi(t)$ is non-negative (and hence self-adjoint) it suffices to show
\begin{equation*}
\sum_{j=0}^\iy\lan e_j,\Phi(s)e_j\ran_2<\iy
\end{equation*}
Using an argument similar to that used in the derivation of \eqref{eqn:Zdef} one can write $\lan e_j,\Phi(s)e_j\ran_2$, as
\begin{equation}\label{eqn:limcrossquad}
\begin{aligned}
\lan e_j,\Phi(s)e_j\ran_2
&= \lambda L!\bar{Z}(j,\pi(s))+k(\pi_j(s)+\pi_{j+1}(s))
\end{aligned}
\end{equation}
where the definition of $\bar{Z}$ is analogous to $Z$, given as,
\begin{equation}\label{eqn:ZjDef}
\begin{aligned}
\bar{Z}(j,\pi(s))
&\doteq\sum_{i_1=0}^{k-2}\frac{\left(\sum_{m=0}^{j-1}\pi_m(s)\right)^{i_1}}{i_1!}\sum_{i_2=0}^{L-i_1}\frac{\pi_{j-1}(s)^{i_2}}{i_2!}\sum_{i_3=0}^{L-i_1-i_2}[i_2\wedge (k-i_1)_+-i_3\wedge (k-i_1-i_2)_+]^2\\
&\quad\times\frac{\pi_j(s)^{i_3}}{i_3!}\frac{\left(\sum_{m=j+1}^{\iy}\pi_{m}(s)\right)^{L-i_1-i_2-i_3}}{(L-i_1-i_2-i_3)!}.
\end{aligned}
\end{equation}
Using arguments as in \eqref{eqn:Zineq} and \eqref{eqn:offDiagBound} it is easy to see that there exists $c_{\bar{Z}}\in(0,\iy)$ such that for all $j\in\NN_0$,
\begin{equation}\label{eqn:zbarDiag}
\bar{Z}(j,\pi(s))
\leq c_{\bar{Z}} (\pi_{j-1}(s)+\pi_j(s)).
\end{equation}
From \eqref{eqn:limcrossquad} and \eqref{eqn:zbarDiag} it follows that there exists a $\kappa_1\in(0,M)$ such that,
\begin{equation*}
\sum_{j=0}^\iy\lan e_j,\Phi(t) e_j\ran_2
\leq \kappa_2\sum_{j=0}^\iy[\pi_{j-1}(t)+\pi_{j}(t)+\pi_{j+1}(t)]\leq 3\kappa_1.
\end{equation*}
Therefore, $\Phi(t)$ is a trace class operator. 
Finally, note that
\begin{equation*}
\begin{aligned}
\int_0^T\|a(s)\|_{\hs}^2ds
&= \int_0^T\sum_{j=0}^\iy\lan a(s)e_j,a(s)e_j\ran_2 ds
= \int_0^T\sum_{j=0}^\iy\lan e_j,\Phi(s)e_j\ran_2 ds\leq 3\kappa_1T
\end{aligned}
\end{equation*}
which completes the proof.
\end{proof}

We now proceed with the proofs of Proposition \ref{prop:DiffUniqueness} and Theorem \ref{thm:mainResult}.
\begin{proof}[Proof of Proposition \ref{prop:DiffUniqueness}]
The existence of a $(X(t))_{0\leq t\leq T}$ as in the statement of Proposition \ref{prop:DiffUniqueness} will be proved as part of Theorem \ref{thm:mainResult}. We now consider the second statement in Proposition \ref{prop:DiffUniqueness} and let $(X(t))_{0\leq t\leq T}$, $(\ti X(t))_{0\leq t\leq T}$ be two $\{\clf_t\}$-adapted processes solving \eqref{eqn:limitSDEb} with sample paths in $\CC([0,T]:\bfell_2)$ such that $X(t)\in\ti\bfell_2$ and $\ti X(t)\in\ti\bfell_2$ for all $t$, almost surely. In order to show that $X(t)=\ti X(t)$ for all $t\in[0,T]$ almost surely it suffices to show the following Lipschitz property on $G$: There exists a $C\in(0,\iy)$ such that for all $x,\ti x\in\ti\bfell_2$,
\begin{equation}\label{eqn:Glip}
\sup_{0\leq t\leq T}\|G(x,\pi(t))-G(\ti x,\pi(t))\|_2\leq C\|x-\ti x\|_2.
\end{equation}
Note from \eqref{eqn:codingF}, \eqref{eqn:zetabardef}, and \eqref{eqn:opDFdef} that for $j\in\NN_0$ and $(x,r)\in\ti\bfell_2\times\cls$,
\begin{equation}\label{eqn:Gtoxi}
G_j(x,r) 
= \lambda L! [\xi^1_{j-1}(x,r)-\xi^1_{j}(x,r)+\xi^2_{j-1}(x,r)-\xi^2_{j}(x,r)+\xi^3_{j-1}(x,r)-\xi^3_{j}(x,r)]+k\xi^4_{j}(x)
\end{equation}
where
\begin{equation*}
\begin{aligned}
&\xi^1_{j}(x,r)\doteq \sum_{i_1=0}^{k-1}i_1\frac{\left(\sum_{m=0}^{j-1}r_m\right)^{i_1-1}}{i_1!}\sum_{i_2=1}^{L-i_1}[i_2\wedge(k-i_1)]\frac{(r_j)^{i_2}}{i_2!}\frac{\left(\sum_{m=j+1}^{\iy}r_{m}\right)^{L-i_1-i_2}}{(L-i_1-i_2)!}\sum_{m=0}^{j-1}x_m,\\
&\xi^2_{j}(x,r)\doteq \sum_{i_1=0}^{k-1}\frac{\left(\sum_{m=0}^{j-1}r_m\right)^{i_1}}{i_1!}\sum_{i_2=1}^{L-i_1}i_2[i_2\wedge(k-i_1)]\frac{(r_j)^{i_2-1}}{i_2!}\frac{\left(\sum_{m=j+1}^{\iy}r_{m}\right)^{L-i_1-i_2}}{(L-i_1-i_2)!}x_j,\\
&\xi^3_{j}(x,r)\doteq\sum_{i_1=0}^{k-1}\frac{\left(\sum_{m=0}^{j-1}r_m\right)^{i_1}}{i_1!}\sum_{i_2=1}^{L-i_1}(L-i_1-i_2)[i_2\wedge(k-i_1)]\frac{(r_j)^{i_2}}{i_2!}\frac{\left(\sum_{m=j+1}^{\iy}r_{m}\right)^{L-i_1-i_2-1}}{(L-i_1-i_2)!}\sum_{m=j+1}^{\iy}x_m,
\end{aligned}
\end{equation*} 
and
\begin{equation*}
\xi^4_{j}(x)
= [x_{j+1}-x_{j}].
\end{equation*}
Also, let $\xi^i\doteq(\xi^i_{j})_{j=0}^\iy$ for $i=1,2,3,4$. 
Using the triangle inequality, it suffices to show that \eqref{eqn:Glip} holds with $G$ replaced with $\xi^i, i=1,2,3,4$. 
Since $\pi(t)\in\cls$ for all $t\in[0,T]$
\begin{equation}\label{eqn:xi1Bound}
\begin{aligned}
\sup_{0\leq t\leq T}\|\xi^1(x,\pi(t))-\xi^1(\ti x,\pi(t))\|^2_2
&\leq \kappa_1'\sup_{0\leq t\leq T}\sum_{j=0}^\iy\pi_j(t)^2\left[\sum_{m=0}^{j-1}x_m-\sum_{m=0}^{j-1}\ti x_m\right]^2\\
&\leq \kappa_1'\sup_{0\leq t\leq T}\sum_{j=0}^\iy j\pi_j(t)\|x-\ti x\|_2^2\\
&\leq \kappa_1\|x-\ti x\|_2^2,
\end{aligned}
\end{equation}
where the last inequality is from \eqref{eqn:firstMomSquaredBoundLim}.
Also,
\begin{equation*}
\begin{aligned}
\sup_{0\leq t\leq T}\|\xi^2(x,\pi(t))-\xi^2(\ti x,\pi(t))\|^2_2
\leq \kappa_2\sum_{j=0}^\iy[x_j-\ti x_j]^2
= \kappa_2\|x-\ti x\|_2^2.
\end{aligned}
\end{equation*}
Using the fact that $\sum_{m=0}^\iy x_m=\sum_{m=0}^\iy \ti x_m=0$ and the calculation in \eqref{eqn:xi1Bound}
\begin{equation*}
\begin{aligned}
\sup_{0\leq t\leq T}\|\xi^3(x,\pi(t))-\xi^3(\ti x,\pi(t))\|^2_2
&\leq \kappa_3'\sup_{0\leq t\leq T}\sum_{j=0}^\iy\pi_j(t)^2\left[\sum_{m=j+1}^{\iy}x_m-\sum_{m=j+1}^{\iy}\ti x_m\right]^2\\
&= \kappa_3'\sup_{0\leq t\leq T}\sum_{j=0}^\iy\pi_j(t)^2\left[\sum_{m=0}^{j}\ti x_m-\sum_{m=0}^{j}x_m\right]^2\\
&\leq \kappa_3\|x-\ti x\|_2^2.
\end{aligned}
\end{equation*}
Finally,
\begin{equation*}
\begin{aligned}
\|\xi^4(x)-\xi^4(\ti x)\|^2_2
\leq  \sum_{j=0}^\iy[x_j-\ti x_j]^2+\sum_{j=0}^\iy[x_{j+1}-\ti x_{j+1}]^2
\leq 2\|x-\ti x\|_2^2.
\end{aligned}
\end{equation*}
Combining the above Lipschitz estimates for $\xi^i,\ i =1,2,3,4$, we have \eqref{eqn:Glip} and the result follows. 
\end{proof}

We now proceed to the proof of Theorem \ref{thm:mainResult}.
\begin{proof}[Proof of Theorem \ref{thm:mainResult}]
From Proposition \ref{prop:diffTightness} $\{(X^n,\bar{M}^n)\}_{n\in\NN}$ is $\CC$-tight in $\DD([0,T]:(\bfell_2)^2)$.
Suppose $(X,\bar{M})$ is a weak limit of a subsequence of $\{(X^n,\bar{M}^n)\}_{n\in\NN}$ (also indexed by $\{n\}$) given on some probability space ($\Om,\clf,\PP$). Let $m\in\NN$ and $\clh:(\bfell_2\times\bfell_2)^m\to\RR$ be a bounded and continuous function. For $s\leq t\leq T$ and $0\leq t_1\leq\ldots\leq t_m\leq s$ we let $\xi_i^n=(X^n(t_i),\bar{M}^n(t_i))$ and $\xi_i=(X(t_i),\bar{M}(t_i))$. Then, for all $j\in\NN_0$,
\begin{equation*}
\begin{aligned}
\E\clh(\xi_1,\ldots,\xi_m)[\bar{M}_j(t)-\bar{M}_j(s)] = \lim_{n\to\iy}\E\clh(\xi_1^n,\ldots,\xi_m^n)[\bar{M}^n_j(t)-\bar{M}_j^n(s)]=0
\end{aligned}
\end{equation*}
where the first equality comes from the uniform integrability property proved in Lemma \ref{lem:UIMBar} and the second comes from the fact that $\bar{M}^n$ is a martingale for each $n\in\NN$. It follows that $\bar{M}$ is a $\{\clf_t\}$-martingale where $\clf_t=\sigma\{X(s),\bar{M}(s),s\leq t\}$.

As was shown in \eqref{eqn:Zdef2},
\begin{equation}\label{eqn:crossquad2}
\begin{aligned}
\lan\bar{M}_i^n,\bar{M}_j^n\ran(t)
&= n\lan M_i^n,M_j^n\ran(t)\\
&= \frac{\lambda}{\binom{n}{L}}\int_0^tZ(i,j,n\pi^n(s))ds- k\int_0^t1_{\{i = j+1\}}\pi_i^n(s)\\
&\qquad - k\int_0^t1_{\{i+1 = j\}}\pi_j^n(s)ds+k\int_0^t1_{\{i=j\}}(\pi_j^n(s)+\pi_{j+1}^n(s))ds
\end{aligned}
\end{equation}
(see \eqref{eqn:Zdef} and \eqref{eqn:Zdef4} for definition of $Z$). 
Using similar arguments as in \eqref{eqn:Zdef4}, we have the estimate
\begin{equation*}
\lan e_i,\Phi(s)e_j\ran_2
= \lambda L!\bar{Z}(i,j,\pi(s))-k1_{\{i+1=j\}}\pi_j(s)-k1_{\{i=j+1\}}\pi_i(s)+k1_{\{i=j\}}(\pi_j(s)+\pi_{j+1}(s)),
\end{equation*}
where for $i<j$,
\begin{equation*}
\begin{aligned}
\bar{Z}(i,j,\pi(s))
&\doteq\sum_{i_1=0}^{k-2}\frac{\left(\sum_{m=0}^{i-2}\pi_m(s)\right)^{i_1}}{i_1!}\sum_{i_2=0}^{k-i_1-1}\frac{\pi_{i-1}(s)^{i_2}}{i_2!}\sum_{i_3=0}^{k-i_1-i_2-1}[i_2-i_3]\frac{\pi_{i}(s)^{i_3}}{i_3!}\\
&\quad\times\sum_{i_4=0}^{k-i_1-i_2-i_3-1}\frac{\left(\sum_{m=i+1}^{j-2}\pi_m(s)\right)^{i_4}}{i_4!}\sum_{i_5=0}^{L-\sum_{n=1}^4i_n}\frac{\pi_{j-1}(s)^{i_5}1_{\{j> i+1\}}}{i_5!}\\
&\quad\times\sum_{i_6=0}^{L-\sum_{n=1}^5i_n}\left[(1_{\{j= i+1\}}(i_3-i_5)+i_5)\wedge\left(k-\sum_{n=1}^4i_n\right)_+-i_6\wedge\left(k-\sum_{n=1}^5i_n\right)_+\right]\\
&\quad\times\frac{\pi_{j}(s)^{i_6}}{i_6!}\frac{\left(\sum_{m=j+1}^\iy \pi_m(s)\right)^{L-\sum_{n=1}^6i_n}}{
\left(L-\sum_{n=1}^6i_n\right)!},
\end{aligned}
\end{equation*}
for $i>j$, $\bar{Z}(i,j,\pi(s))\doteq\bar{Z}(j,i,\pi(s))$, and for $i=j$, $\bar{Z}(j,j,\pi(s)) \doteq \bar{Z}(j,\pi(s))$, where $\bar{Z}(j,r)$ is defined in \eqref{eqn:ZjDef}.
Using arguments similar to those used in \eqref{eqn:binomToExp} and \eqref{eqn:zetatobar} one can write
\begin{equation*}
\begin{aligned}
&\left|Z(i,j,n\pi^n(s)) - \frac{n!}{(n-L)!}\bar{Z}(i,j,\pi^n(s))\right|\leq \kappa_1n^{L-1}.
\end{aligned}
\end{equation*}
It follows from this, \eqref{eqn:crossquad2}, \eqref{eqn:limcrossquad}, and the fact that $\pi^n\to\pi$ in probability that
\begin{equation*}
\sup_{0\leq t\leq T}\left|\lan\bar{M}_i^n(t),\bar{M}_j^n(t)\ran- \int_0^t\lan e_i,\Phi(s)e_j\ran_2ds\right|\to 0
\end{equation*}
in probability. A similar argument as in Lemma \ref{lem:UIMBar} shows that $\{\lan\bar{M}^n_i,\bar{M}^n_j\ran_t\}_{n\in\NN}$ is uniformly integrable for each $t\in[0,T]$ and $i,j\in\NN_0$. Applying the above convergence and uniform integrability properties,
\begin{equation*}
\begin{aligned}
&\E\clh(\xi_1,\ldots,\xi_m)[\lan\bar{M}_i,\bar{M}_j\ran_t-\lan\bar{M}_i,\bar{M}_j\ran_s-\int_s^t\lan e_i,\Phi(u)e_j\ran_2du]\\
&\qquad= \lim_{n\to\iy}\E\clh(\xi_1^n,\ldots,\xi_m^n)[\lan\bar{M}_i^n,\bar{M}_j^n\ran_t-\lan\bar{M}^n_i,\bar{M}^n_j\ran_s-\int_s^t\lan e_i,\Phi(u)e_j\ran_2du]=0.
\end{aligned}
\end{equation*}
Also from Lemma \ref{lem:UIMBar} and Fatou's lemma
$\E\sup_{0\leq t\leq T}\sum_{j=0}^\iy|\bar{M}_j(t)|^2<\iy.$
Thus we have that $\bar{M}\doteq (\bar{M}_j)_{j\in\NN_0}$ is a collection of square integrable $\{\clf_t\}$-martingales with
\begin{equation*}
\lan \bar{M}_i,\bar{M}_j\ran (t)
= \int_0^t\lan e_i,\Phi(s)e_j\ran_2ds,\qquad t\in[0,T].
\end{equation*}
From Theorem 8.2 of \cite{da2014stochastic} it now follows that there is a $\bfell_2$-cylindrical Brownian motion $\{(W_t(h))_{0\leq t\leq T}: h\in\bfell_2\}$ on some extension $(\bar{\Om},\bar{\clf},\bar{\PP},\{\bar{\clf}_t\})$ of the filtered probability space $(\Om,\clf,\PP,\{\clf_t\})$ such that
\begin{equation}\label{eqn:limitMartingale}
\bar{M}(t)=\int_0^ta(s)dW(s).
\end{equation} 

Recall the representation of $X^n$ in terms of $\bar{A}^n$ and $\bar{M}^n$ from \eqref{eqn:semiMart}.
We now argue that together with $X^n$ and $\bar{M}^n$, $\bar{A}^n(\cdot)$ converges to $\int_0^\cdot G(X(s),\pi(s))ds$ in $\DD([0,T]:\bfell_2)$, in distribution, as $n\to\iy$ (along the chosen subsequence).  The definition of $\bar{A}^n$ in \eqref{eqn:Abardef} and the estimate in \eqref{eqn:AtoF} imply that
\begin{equation}\label{eqn:Aexp}
\sup_{0\leq t\leq T}\left\|\bar{A}^n(t)
- \int_0^t\sqrt{n}[F(\pi^n(s))-F(\pi(s))]ds\right\|_2\leq\frac{\kappa_2}{\sqrt{n}}.
\end{equation}
For $r,\ti r\in\cls$ such that $(r-\ti r)\in\ti\bfell_2$, the $i$-th component of $F(r)-F(\ti r)$ can be written
\begin{equation*}
\begin{aligned}
 {[}F(r)-F(\ti r){]}_i
&= \int_0^1\frac{\partial}{\partial u}F_i(r u + (1-u)\ti r)du\\
&= \int_0^1G_i((r-\ti r), ru+(1-u)\ti r)du\\
& = G_i(r - \ti r, \ti r)+\int_0^1[G_i((r-\ti r, ru+(1-u)\ti r)-G_i(r-\ti r,\ti r)]du.
\end{aligned}
\end{equation*}
Therefore, observing that $cG_i(x,r)=G_i(cx,r)$ for $c\in\RR$ and $(x,r)\in\ti\bfell_2\times\cls$ and noting from \eqref{eqn:XnsecondMoment} that $X^n(s)\in\ti\bfell_2$ for every $s\in[0,T]$ almost surely, we can write
\begin{equation}\label{eqn:taylor}
\sqrt{n}[F(\pi^n(s))-F(\pi(s))]_i
= G_i(X^n(s),\pi(s))+R_i^n(s)
\end{equation}
where
\begin{equation*}
\begin{aligned}
R^n_i(s) 
&= \int_0^1[G_i(X^n(s),\pi^n(s)u+(1-u)\pi(s))-G_i(X^n(s),\pi(s))]du.
\end{aligned}
\end{equation*} 
Thus
\begin{equation*}
\sqrt{n}[F(\pi^n(s))-F(\pi(s))]
= G(X^n(s),\pi(s))+R^n(s)
\end{equation*}
where $R^n(s)\doteq (R^n_i(s))_{i\in\NN_0}$.
We now show that $\int_0^T\|R^n(s)\|_2ds\to0$ in probability as $n\to\iy$. Since $\sum_{m=0}^jX^n_m(s)= \sum_{m=j+1}^\iy X^n_m(s)$, it follows from \eqref{eqn:tripleIneq} that for $r,\ti r\in\cls$
\begin{equation*}
\begin{aligned}
&\|\xi^i(X^n(s),r)-\xi^i(X^n(s),\ti r)\|_2^2\\
&\qquad\leq \kappa_3'\sum_{j=0}^\iy\left(\sum_{m=0}^{j}|X^n_m(s)|\right)^2\left[[r-\ti r]_j^2+\ti r_j\left(\sum_{i=0}^{j-1}[r-\ti r]_i\right)^2+\ti r_j\left(\sum_{i=j+1}^{\iy}[r-\ti r]_i\right)^2\right]\\
&\qquad\leq \kappa_3\left(\sum_{j=0}^\iy j^2|X^n_j(s)|^2\right)\sum_{j=0}^\iy\left[j\ti r_j\|r-\ti r\|_2^2+[r-\ti r]_j^2\right]
\end{aligned}
\end{equation*}
for $i=1,2,3$. The triangle inequality, \eqref{eqn:Gtoxi}, and the observation that $\sup_{0\leq s\leq T}\sum_{j=0}^\iy j\pi_j(s)<\iy$ (see \eqref{eqn:firstMomSquaredBoundLim}) then implies that
\begin{equation*}
\|G(X^n(s),\pi^n(s)u+(1-u)\pi(s))-G(X^n(s),\pi(s))\|_2^2\leq \kappa_3\left(\sum_{j=0}^\iy j^2|X^n_j(s)|^2\right)\|\pi^n(s)-\pi(s)\|_2^2.
\end{equation*}
Since $\sup_{0\leq s\leq T}\|\pi^n(s)-\pi(s)\|_2\to0$ in probability and, from \eqref{eqn:XnsecondMoment}, $\sup_{n\in\NN}\E\sup_{0\leq s\leq T}\sum_{j=0}^\iy j^2|X^n_j(s)|^2<\iy$, it follows that
\begin{equation*}
\sup_{0\leq u\leq 1}\sup_{0\leq s\leq T}\|G(X^n(s),\pi^n(s)u+(1-u)\pi(s))-G(X^n(s),\pi(s))\|_2\to 0
\end{equation*}
in probability, as $n\to\iy$
and thus
\begin{equation}\label{eq:Eq504}
\int_0^T\|R_n(s)\|_2ds
\to 0, \text{ in probability.}
\end{equation}
In view of \eqref{eqn:Aexp}, \eqref{eqn:taylor}, and \eqref{eq:Eq504} it now suffices to show that, along the subsequence
$$\left(X^n, \bar{M}^n, \int_0^\cdot G(X^n(s),\pi(s))ds\right) \Rightarrow \left(X, \bar M, \int_0^\cdot G(X(s),\pi(s))ds\right)$$
in $\DD([0,T]:(\bfell_2)^3)$.
By appealing to the Skorohod representation theorem we can assume without loss of generality that $(X^n,\bar{M}^n)$ converges almost surely in $\DD([0,T]:(\bfell_2)^2)$ to $(X,\bar M)$.
From \eqref{eqn:XnsecondMoment} and Fatou's lemma we also have
\begin{equation*}
\sup_{0\leq t\leq T}\sum_{j=0}^\iy j^2(X_j(t))^2<\iy\quad\text{a.s.}
\end{equation*}
Also, since $\sum_{j=0}^\iy X^n_j(t)=0$ for all $t\in[0,T]$ and $n\in\NN$, by Lemma \ref{lem:tibfell2X} and \eqref{eqn:XnsecondMoment}, we have that $\sum_{j=0}^\iy X_j(t)=0$ for all $t\in[0,T]$ almost surely as well. It then follows that $X^n(t),X(t)\in\ti\bfell_2$ for all $t\in[0,T]$ almost surely for all $n\in\NN$. From the Lipschitz property in \eqref{eqn:Glip} it now follows that, as $n\to\iy$,
\begin{equation}\label{eqn:Gconv}
\int_0^T\|G(X^n(s),\pi(s))-G(X(s),\pi(s))\|_2ds
\leq C\int_0^T\|X^n(s)-X(s)\|_2ds
\to 0,
\end{equation} 
which proves the desired convergence.
Together with \eqref{eqn:semiMart} and the representation \eqref{eqn:limitMartingale} we now have that the limit point $(X,\bar{M})$ satisfies
\begin{equation*}
X(t) = x_0 + \int_0^tG(X(s),\pi(s))ds + \int_0^ta(s)dW(s)
\end{equation*}
almost surely for all $t\in[0,T]$. Since $X(t)\in\ti\bfell_2$ for all $t\in[0,T]$ almost surely, this in particular proves the existence part of Proposition \ref{prop:DiffUniqueness}. Finally the uniqueness part of Proposition \ref{prop:DiffUniqueness} (which was established earlier in this section) now says that $X^n$ converges in distribution along the full sequence to the unique weak solution of \eqref{eqn:limitSDE} with values in $\ti\bfell_2$. The result follows.
\end{proof}

\section{Numerical Results}\label{sec:example}
In this section, we present some simulation results comparing the pre-limit $n$-server system with results of the corresponding law of large number and central limit approximations. We consider a system with $n=10,000$ servers. 
For all combinations of $L$ and $k$ in the set $\{(L,k)\in\NN\times\NN:2\leq L\leq 5,k< L\}$, we simulate 1,000 realizations of both the $n$-server system and the diffusion approximation given through Theorem \ref{thm:mainResult} using parameters $T=10$, $\lambda = .9$, and $c=1$. Note that since the limiting processes are infinite dimensional we must truncate to a finite dimensional approximation in order to perform simulations. In our numerical approximations, we truncate to the first 20 coordinates. All computations were performed in Matlab. A numerical ODE solver (ode45) was used to compute the ODE corresponding to the law of large number limit. The limit diffusion was simulated using Euler's method with step sizes of .1. The realizations of the diffusion were used to create 95\% confidence intervals for the following metrics at time $T$; the number of empty queues, the number of ``large'' queues (queues with more than 5 jobs), and the mean queue length. The coverage rates (i.e. the proportion of the $n$-server system simulations which fall within the 95\% confidence interval estimated by the diffusion approximation) can be found in Tables \ref{table:empty}, \ref{tab:Large}, and \ref{tab:mean}. 
\begin{table}[h]
\begin{minipage}{.44\textwidth}
\centering
\begin{tabular}{|c|c|c|c|c|c|}
\cline{3-6}
\multicolumn{2}{c|}{\multirow{2}{*}{}} &\multicolumn{4}{|c|}{L}\\
\cline{3-6}
 \multicolumn{2}{c|}{} & 2 & 3 & 4 & 5\\
\hline
\multirow{4}{*}{k} &1 & 95.1\% & 96.3\% & 97.7\% & 95.9\%\\
\cline{2-6}
&2 & - & 96.5\% & 95.3\% & 95.6\% \\
\cline{2-6}
&3 & - & - & 96.8\% & 97.5\% \\
\cline{2-6}
&4 & -& - & - & 97.1\% \\
\hline
\end{tabular}
\caption{Empty Queue Coverage Rate}\label{table:empty}
\end{minipage}
\begin{minipage}{.44\textwidth}
\centering
\begin{tabular}{|c|c|c|c|c|c|}
\cline{3-6}
\multicolumn{2}{c|}{\multirow{2}{*}{}} &\multicolumn{4}{|c|}{L}\\
\cline{3-6}
 \multicolumn{2}{c|}{}& 2 & 3 & 4 & 5\\
\hline
\multirow{4}{*}{k} &1 & 97.1\% & 100\% & 100\% & 100\%\\
\cline{2-6}
&2 &  & 94.9\% & 95.6\%& 100\% \\
\cline{2-6}
&3 & - & - & 96.7\% & 96.4\% \\
\cline{2-6}
&4 & - & - & - & 95.0\%\\
\hline
\end{tabular}
\caption{Large Queue Coverage Rate}\label{tab:Large}
\end{minipage}\\
\begin{minipage}{.44\textwidth}
\centering
\begin{tabular}{|c|c|c|c|c|c|}
\cline{3-6}
\multicolumn{2}{c|}{\multirow{2}{*}{}} &\multicolumn{4}{|c|}{L}\\
\cline{3-6}
 \multicolumn{2}{c|}{}& 2 & 3 & 4 & 5\\
\hline
\multirow{4}{*}{k} &1 & 95.2\% & 94.8\% & 94.8\% & 95.4\%\\
\cline{2-6}
&2 &  & 94.7 & 92.9\% & 94.9\%\\
\cline{2-6}
&3 & - & - & 96.8\% & 95.1\%\\
\cline{2-6}
&4 & - & - & - & 94.8\%\\
\hline
\end{tabular}
\caption{Mean Queue Length Coverage Rate}\label{tab:mean}
\end{minipage}
\end{table}
As is seen in these results, the diffusion approximation based confidence intervals, in general, contain approximately 95\% of the $n$-server simulated observations, as desired.

The goal of this paper was to develop reliable approximations of the $n$-server system that are much quicker to simulate. Table \ref{table:times} presents the average time (in seconds) required to simulate one trial of the finite system (left) and diffusion approximation (right). As is seen from these tables, the time required to simulate the diffusion approximations is substantially smaller than for the underlying $n$-server jump-Markov process.
\begin{table}[h]
\begin{minipage}{.44\textwidth}
\centering
\begin{tabular}{|c|c|c|c|c|c|}
\cline{3-6}
\multicolumn{2}{c|}{\multirow{2}{*}{}} &\multicolumn{4}{|c|}{L}\\
\cline{3-6}
 \multicolumn{2}{c|}{}& 2 & 3 & 4 & 5\\
\hline
\multirow{4}{*}{k} &1 & 22.6 & 23.8 & 25.4 & 19.2\\
\cline{2-6}
& 2 & - & 39.1 & 38.0 & 33.1 \\
\cline{2-6}
& 3 & - & - & 44.5 & 45.5\\
\cline{2-6}
& 4 & - & - & - & 57.4 \\
\hline
\end{tabular}
\subcaption{Average Time for Finite System}
\end{minipage}
\begin{minipage}{.44\textwidth}
\centering
\begin{tabular}{|c|c|c|c|c|c|}
\cline{3-6}
\multicolumn{2}{c|}{\multirow{2}{*}{}} &\multicolumn{4}{|c|}{L}\\
\cline{3-6}
 \multicolumn{2}{c|}{}& 2 & 3 & 4 & 5\\
\hline
\multirow{4}{*}{k} &1 & .29 & .50 & .79 & .79\\
\cline{2-6} 
& 2 & - & 2.4 & 3.7 & 4.6 \\
\cline{2-6}
&3 & - & - & 6.0 & 10.0 \\
\cline{2-6}
&4 & -  & - & - & 16.3 \\
\hline
\end{tabular}
\subcaption{Average Time for Limit Diffusion}
\end{minipage}
\caption{Average Simulation Times}\label{table:times}
\end{table}
In addition, increasing $n$ will further increase the amount of time required to simulate the $n$-server system. Indeed, $n=10,000$ is a small number compared to the size of typical data centers and server farms that have machines which number in the hundreds of thousands.

\setcounter{equation}{0}
\appendix

\numberwithin{equation}{section}

\section{Auxiliary Results}
\subsection{Criterion for Tightness of Hilbert-Valued Random Variables}
The following theorem gives sufficient conditions for tightness of a sequence of random variables taking values in a (possibly infinite-dimensional) Hilbert space. For a proof see Corollary 2.3.1 of \cite{KallianpurXiong}.
\begin{theorem}\label{thm:AppHilbertTight}
Let $\HH$ be a separable Hilbert Space with inner product $\lan\cdot,\cdot\ran$ and complete orthonormal system $\{e_i\}_{i=1}^\iy$. Suppose $\{Y_n\}_{n\in\NN}$ is a sequence of $\HH$-valued random variables satisfying the following conditions:
\begin{enumerate}
\item[a)] For each $N\in\NN$, $\lim_{A\to\iy}\sup_{n\in\NN}\PP\left(\max_{1\leq i\leq N}\lan Y_n, e_i\ran^2>A\right)=0$
\item[b)] For every $\del>0$, $\lim_{N\to\iy}\sup_{n\in\NN}\PP\left(\sum_{j=N}^\iy \lan Y_n,e_j\ran^2>\del\right)=0$.
\end{enumerate}
Then $\{Y_n\}_{n\in\NN}$ is a tight sequence of $\HH$-valued random variables.
\end{theorem}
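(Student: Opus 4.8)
The plan is to prove tightness directly from the definition: for each $\eps>0$ I would exhibit a compact set $K_\eps\subset\HH$ with $\sup_{n\in\NN}\PP(Y_n\notin K_\eps)\le\eps$, which is precisely the assertion that $\{Y_n\}_{n\in\NN}$ is a tight sequence of $\HH$-valued random variables (and, via Prokhorov's theorem, yields relative compactness in distribution). The only structural fact I would invoke is the classical characterization of relative compactness in a separable Hilbert space relative to a complete orthonormal system $\{e_i\}$: a set $K\subset\HH$ is relatively compact if and only if it is norm-bounded and has uniformly negligible tails, i.e. $\lim_{N\to\iy}\sup_{x\in K}\sum_{j\ge N}\lan x,e_j\ran^2=0$. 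Granting this, the whole argument reduces to using hypothesis (b) to control tails uniformly in $n$ with high probability, and hypothesis (a) to control the finitely many leading coordinates (hence, together with (b), the norm) uniformly in $n$ with high probability.

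Concretely, I would fix $\eps>0$ and, using (b), choose for each $m\in\NN$ an integer $N_m$ (increasing in $m$) with
\begin{equation*}
\sup_{n\in\NN}\PP\left(\sum_{j\ge N_m}\lan Y_n,e_j\ran^2>\tfrac1m\right)\le \eps\,2^{-(m+2)}.
\end{equation*}
Then, applying (a) with $N=N_1$, I would choose $A\in(0,\iy)$ with $\sup_{n\in\NN}\PP(\max_{1\le i\le N_1}\lan Y_n,e_i\ran^2>A)\le\eps/2$, and set
\begin{equation*}
K_\eps\doteq\left\{x\in\HH:\ \lan x,e_i\ran^2\le A\ \text{for }1\le i\le N_1,\ \ \sum_{j\ge N_m}\lan x,e_j\ran^2\le \tfrac1m\ \text{for all }m\in\NN\right\}.
\end{equation*}
This set is closed (an intersection of closed sets), bounded since $\|x\|^2\le N_1A+1$ for $x\in K_\eps$, and has uniformly vanishing tails (given $\del>0$, take $m>1/\del$), so it is compact by the criterion above. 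A union bound then gives, for every $n$,
\begin{equation*}
\PP(Y_n\notin K_\eps)\le \PP\left(\max_{1\le i\le N_1}\lan Y_n,e_i\ran^2>A\right)+\sum_{m\in\NN}\PP\left(\sum_{j\ge N_m}\lan Y_n,e_j\ran^2>\tfrac1m\right)\le \frac{\eps}{2}+\sum_{m\in\NN}\eps\,2^{-(m+2)}<\eps,
\end{equation*}
and letting $\eps\downarrow0$ completes the proof.

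The main obstacle — indeed the only nonroutine ingredient — is the Hilbert-space compactness characterization; for a self-contained treatment I would prove it by a diagonal/subsequence argument (extract coordinatewise limits from a bounded sequence in $K_\eps$, place the limit in $\HH$ via Fatou's lemma, then upgrade coordinatewise convergence to norm convergence using the uniform tail bound). The remaining manipulations with hypotheses (a) and (b) are entirely mechanical. This is exactly the route of Corollary 2.3.1 of \cite{KallianpurXiong}, which one may alternatively cite verbatim.
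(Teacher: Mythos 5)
Your argument is correct and is precisely the standard proof of Corollary 2.3.1 of Kallianpur--Xiong, which the paper invokes by citation rather than proving; you have simply made the argument self-contained. The construction of $K_\eps$ via conditions (a) and (b), the compactness criterion (closed, bounded, uniformly small tails relative to the CONS), and the union bound are all sound, so no gap.
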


\subsection{Criterion for Tightness of RCLL Processes}
The following theorem gives a criterion for tightness of a sequence of RCLL processes with values in a Polish space, see \cite{kurtz1981approximation}.
\begin{theorem}\label{thm:AppSemiMartTight}
Let $\SSS$ be a Polish Space and $\{Y_n\}_{n\in\NN}$ be a sequence of $\DD([0,T]:\SSS)$-valued $\{\clf^n_t\}$-semimartingales satisfying the following conditions:
\begin{enumerate}
\item[$(T_1)$] $\{Y_n(t)\}_{n\in\NN}$ is tight for every $t$ in a dense subset of $[0,T]$.
\item[$(A)$ ] For each  $\eps>0,\ \eta>0$ and $N \in [0,T-\eps]$ there exists a $\del>0$ and $n_0$ with the property that for every collection of stopping times $(\tau_n)_{n\in\NN}$ ($\tau_n$ being an $\clf^n_t\doteq\sigma\{Y_n(s):s\leq t\}$-stopping time) with $\tau_n\leq N$,
\begin{equation*}
\sup_{n\geq n_0}\sup_{0\leq\theta\leq\del}\PP\{d(Y_n(\tau_n+\theta),Y_n(\tau_n))\geq\eta\}\leq \eps,
\end{equation*}
where $d(\cdot,\cdot)$ is the distance on $\SSS$.
\end{enumerate}
Then $\{Y_n\}_{n\in\NN}$ is tight in $\DD([0,T]:\SSS)$.
\end{theorem}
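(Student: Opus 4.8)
\noindent The plan is to recognize this as the Aldous tightness criterion (the semimartingale hypothesis is not actually used; the result holds for any sequence of $\DD([0,T]:\SSS)$-valued processes satisfying $(T_1)$ and $(A)$, and in this paper it is applied with $\SSS=(\bfell_2)^2$ and $Y_n=(X^n,\bar M^n)$, cf.\ Proposition \ref{prop:diffTightness}). The starting point is the standard characterization of relative compactness in the Skorohod space $\DD([0,T]:\SSS)$ (cf.\ \cite{ethier2009markov}): $\{Y_n\}_{n\in\NN}$ is tight provided (i) for each $t$ in a dense subset of $[0,T]$ the family $\{Y_n(t)\}_{n\in\NN}$ of $\SSS$-valued random variables is tight, and (ii) for every $\eta>0$,
\begin{equation*}
\lim_{\del\downarrow 0}\ \limsup_{n\to\iy}\ \PP\big(w'(Y_n,\del)\ge\eta\big)=0,
\end{equation*}
where $w'(x,\del)$ is the Skorohod modulus of continuity, the infimum of $\max_i \sup_{s,t\in[t_{i-1},t_i)}d(x(s),x(t))$ over partitions $0=t_0<t_1<\cdots<t_v=T$ with $\min_{i<v}(t_i-t_{i-1})>\del$. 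Condition (i) is exactly $(T_1)$, so the whole argument reduces to deducing (ii) from $(T_1)$ and $(A)$.

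\noindent To obtain (ii) I would first note that $(A)$ together with $(T_1)$ yields marginal tightness of $\{Y_n(t)\}_{n\in\NN}$ at \emph{every} $t\in[0,T]$ (approximate $Y_n(t)$ by $Y_n(s)$ with $s$ in the dense set and $t-s$ small, and use $(A)$ with the deterministic stopping time $\tau_n\equiv s$ to control the increment). For the modulus bound, fix $\eta,\eps>0$ and introduce, for each $n$, the successive $(\eta/4)$-fluctuation times $\tau^n_0=0$ and
\begin{equation*}
\tau^n_{j+1}=\inf\{t>\tau^n_j:\ d(Y_n(t),Y_n(\tau^n_j))\ge\eta/4\ \text{ or }\ d(Y_n(t-),Y_n(\tau^n_j))\ge\eta/4\}\wedge T .
\end{equation*}
Because the paths are RCLL these are $\{\clf^n_t\}$-stopping times, on each $[\tau^n_j,\tau^n_{j+1})$ the path oscillates by at most $\eta/2$, and for every fixed path $\tau^n_j=T$ once $j$ is large. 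Consequently, if among the ``genuine'' increments (those that terminate in an actual $(\eta/4)$-fluctuation) none is $\le\del$ and their number is at most some integer $K$, then $0=\tau^n_0<\tau^n_1<\cdots<T$ is a $\del$-sparse partition witnessing $w'(Y_n,\del)\le\eta/2<\eta$. Thus (ii) follows once one bounds, uniformly in $n$ and for $\del$ small, the probability of the ``bad'' event on which some genuine increment is $\le\del$ or the number of fluctuations exceeds $K$.

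\noindent Estimating this bad event is the main obstacle and constitutes the technical core of Aldous's lemma. The difficulty is that $(A)$ only controls $\sup_{0\le\theta\le\del}\PP(d(Y_n(\tau_n+\theta),Y_n(\tau_n))\ge\eta)$, with the supremum \emph{outside} the probability, so neither a union bound over the (random) number of fluctuation times nor one over $\theta\in[0,\del]$ suffices. The resolution, for which I would follow \cite{kurtz1981approximation} (and Aldous's original argument), proceeds by: (a) using $(A)$ and the marginal tightness already established to show that the number of $(\eta/4)$-fluctuations before $T$ is tight uniformly in $n$, so that $K$ may be fixed \emph{independently} of $\del$; (b) with $K$ now fixed, choosing $\del$ from $(A)$ with probability parameter $\eps/(2K)$, so that a union bound over the bounded index range $j\le K$---combined with a careful treatment of left limits, replacing $Y_n(\tau^n_{j+1}-)$ by evaluation at a slightly earlier stopping time---shows that, with probability at least $1-\eps$ for $n$ large, every genuine increment exceeds $\del$ and there are at most $K$ of them; (c) concluding $w'(Y_n,\del)<\eta$ on this event, which gives (ii). Together with the upgraded form of (i), (ii) yields tightness of $\{Y_n\}$ in $\DD([0,T]:\SSS)$.
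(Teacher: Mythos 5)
The paper does not supply a proof of this theorem: it is stated in Appendix A as a known auxiliary result with a pointer to Kurtz's \emph{Approximation of Population Processes} \cite{kurtz1981approximation}, which is precisely the reference your sketch invokes. Your identification of the statement as Aldous's tightness criterion, the observation that the semimartingale hypothesis plays no role, and the outline (reduction to the Skorohod modulus condition via \cite{ethier2009markov}, $(\eta/4)$-fluctuation stopping times, uniform-in-$n$ control on the number of fluctuations, then a union bound over the now-bounded index range while treating left limits carefully) all track the standard argument in that source, so there is no discrepancy with the paper to flag; the only caveat is that your steps (a)--(c) are, as you acknowledge, left as an outline deferring to \cite{kurtz1981approximation} for the quantitative estimates, which is exactly the level at which the paper itself treats the statement.
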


\subsection{Hilbert-Schmidt and Trace Class Operators}\label{sec:HSInfo}
We collect here some elementary facts about trace class and Hilbert-Schmidt operators. We refer the reader to \cite{reed1980functional} for details. For a separable Hilbert space $\HH$ (with inner product $\lan\cdot,\cdot\ran$ and norm $\|\cdot\|$), let $\cll(\HH)$ be the collection of all bounded linear operators on $\HH$. An operator $A\in\cll(\HH)$ is called  non-negative if $\lan u, Au\ran\geq 0$ for all $u\in\HH$. Such an operator is called trace class if for some CONS $\{e_i\}$ in $\HH$, $\sum_i\lan Ae_i,e_i\ran<\iy$ in which case the quantity is finite (and is the same) for every CONS $\{e_i\}$. An operator $A\in\cll(\HH)$ is called Hilbert-Schmidt if there exists a CONS $\{e_i\}$ in $\HH$ such that $\sum_j\lan Ae_j,Ae_j\ran=\sum_j\|Ae_j\|^2<\iy$. In that case, this quantity is the same for all CONS $\{e_i\}$ and its square root is called the Hilbert-Schmidt norm of $A$, denoted as $\|A\|_{\hs}$. For a non-negative operator $A\in\cll(\HH)$, there is a unique non-negative $B\in\cll(\HH)$ referred to as the non-negative square root of $A$ such that $B^2=A$. If $A$ is a trace class operator, then $B$ is a Hilbert-Schmidt operator.

\subsection{Cylindrical Brownian Motion}\label{sec:CylWeiner}
A collection of continuous real stochastic processes $\{(W_t(h))_{0\leq t\leq T}:h\in\bfell_2\}$ given on a filtered probability space $(\Om,\clf,\PP,\{\clf_t\})$ is called a $\bfell_2$-cylindrical Brownian motion if for every $h\in\bfell_2$, $(W_t(h))_{0\leq t\leq T}$ is a $\{\clf_t\}$-Brownian motion with variance $\|h\|^2_2$ and for $h,k\in\bfell_2$
\begin{equation*}
\lan W(h),W(k)\ran_t = \lan h,k\ran_2t,\ 0\leq t\leq T.
\end{equation*}
For a measurable map $a$ from $[0,T]$ to the space of Hilbert-Schmidt operators from $\bfell_2$ to $\bfell_2$ such that $\int_0^T\|a(s)\|^2_{\hs}ds<\iy$, we denote by $\int_0^ta(s)dW(s)$ the $\bfell_2$-valued martingale defined as the limit of 
\begin{equation*}
\sum_{i=1}^n\sum_{j=1}^n\phi_i\int_0^t\lan\phi_i,a(s)\phi_j\ran_2dW_s(\phi_j)
\end{equation*}
as $n\to\iy$ where $\{\phi_i\}_{i\in\NN}$ is a complete orthonormal system (CONS) in $\bfell_2$. For the fact that the limit exists and is independent of the choice of CONS, we refer the reader to Chapter 4 of \cite{da2014stochastic}.

\bigskip
\noindent {\bf Acknowledgements.}  Research supported in part by the National Science Foundation (DMS-1016441, DMS-1305120), the Army Research Office (W911NF-14-1-0331) and DARPA (W911NF-15-2-0122).


{\footnotesize

}
{\sc
\bigskip
\noindent
A. Budhiraja and E. Friedlander\\
Department of Statistics and Operations Research\\
University of North Carolina\\
Chapel Hill, NC 27599, USA\\
email: budhiraj@email.unc.edu, ebf2@live.unc.edu

}
\end{document}